\numberwithin{equation}{section}
\theoremstyle{plain}
\newtheorem{thm}{Theorem}[section]
\newtheorem{lem}[thm]{Lemma}
\newtheorem{pr}[thm]{Proposition}
\newtheorem{cor}[thm]{Corollary}
\newtheorem{defns}[thm]{Definitions}
\newtheorem{question}{Question}
\theoremstyle{remark}
\newtheorem*{unremark}{Remark}
\newtheorem*{unremarks}{Remarks}
\def\N{\mathbb{N}}
\def\Z{\mathbb{Z}}
\def\CC{{\cal C}}
\def\ee{\varepsilon}
\def\E{{\mathbf E}}
\def\P{{\mathbf P}}
\def\Cox{\hfill \Box}
\def\disp{\displaystyle}
\def\one{{\bf 1}}
\def\cc{{\bf c}}
\def\|{{\, | \, }}
\def\F{{\mathcal F}}
\def\T{{\mathcal T}}
\def\V{{\mathcal V}}
\def\B{{\mathcal B}}
\def\CC{{\mathcal C}}
\def\rt{{\bf 0}}
\def\parent{{\rm par}\,}
\def\ulam{{\mathcal U}}
\def\VU{{\bf V}}  %%% Vertex set of the Ulam tree
\def\bfa{{\bf a}}
\def\conn{\leftrightarrow}
\newcommand{\Bon}{\mathsf{Bon}}
\newcommand{\open}{\mathsf{Open}}
\newcommand{\Nobranch}{\mathsf{NoBranch}}
\newcommand{\leafbranch}{\mathsf{LeafBranch}}
\newcommand{\branch}{\mathsf{Branch}}
\newcommand{\subt}{\mathsf{SubTree}}
\def\Phiinv{\iota}
\def\embed{\iota}
\def\G{{\mathcal G}}
\def\ul{\underline}
\def\GW{{\tt GW}}
\def\one{\mathbf{1}}
\def\rtt{{\mathbf{0}}}
\def\TT{{\bf T}}
\def\Trp{{\mathcal R}}
\def\TFV{{\mathcal D}}
\def\rmj{r_{m,j}}
\def\A{{\mathcal A}}
\def\red{\color{red}}
\newcommand{\Ttilde}{\widetilde{T}}
\def\gt{{\tilde{g}}}
\begin{document}
	
\begin{center}
{\large \bf Quenched Survival of Bernoulli Percolation on Galton-Watson Trees}
\end{center}
	
\begin{flushright}
	Marcus Michelen, Robin Pemantle and Josh Rosenberg \\
	{\tt \{marcusmi, pemantle, rjos\}@math.upenn.edu}
\end{flushright}
\begin{abstract}
We explore the survival function for percolation on Galton-Watson trees.  
Letting $g(T,p)$ represent the probability a tree $T$ survives Bernoulli 
percolation with parameter $p$, we establish several results about the 
behavior of the random function $g(\TT , \cdot)$, where $\TT$ is drawn from
the Galton-Watson distribution.  These include almost sure smoothness 
in the supercritical region; an expression for the $k\text{th}$-order 
Taylor expansion of $g(\TT , \cdot)$ at criticality in terms of limits of 
martingales defined from $\TT$ (this requires a moment condition depending 
on $k$); and a proof that the $k\text{th}$ order derivative extends 
continuously to the critical value.  Each of these results is shown 
to hold for almost every Galton-Watson tree.
\end{abstract}
	
{{\bf Keywords}: supercritical, quenched survival, random tree, branching process.}

\setcounter{section}{0}
\section{Introduction} \label{sec:intro}
	
Let $\GW$ denote the measure on locally finite rooted trees induced 
by the Galton-Walton process for some fixed progeny distribution $\{ p_n \}$
whose mean will be denoted $\mu$.  A random tree generated according
to the measure $\GW$ will be denoted as $\TT$.  Throughout, we let $Z$ 
denote a random variable with distribution $\{ p_n \}$ and assume that 
$\P[Z=0]=0$; passing to the reduced tree as described in 
\cite[Chapter 1.D.12]{athreya-ney}, no generality is lost for any of
the questions in the paper.

The growth rate and regularity properties of both random and deterministic 
trees can be analyzed by looking at the behavior of a number of different 
statistics.  The Hausdorff dimension of the boundary and the escape speed 
of random walk are almost surely constant for a fixed Galton-Watson measure.  
Quantities that are random but almost surely well defined include the 
martingale limit $W := \lim Z_n / \mu^n$, the resistance to infinity when 
edges at level $n$ carry resistance $x^n$ for a fixed $x < \mu$, and the 
probability $g(\TT,p)$ that $\TT$ survives Bernoulli-$p$ percolation, i.e., 
the probability there is a path of open edges from the root to infinity, 
where each edge is declared open with independent probability $p$.  
In this paper we seek to understand $\GW$-almost sure regularity properties 
of the survival function $g(\TT,\cdot)$ and to compute its derivatives
at criticality.
	
The properties of the Bernoulli-$p$ percolation survival function 
have been studied extensively in certain other cases, such as on 
the deterministic $d$-dimensional integer lattice, $\Z^d$.  When $d=2$, 
the Harris-Kesten Theorem~\cite{harris60,kesten-80} states that the 
critical percolation parameter $p_c$ is equal to $1/2$ and that
critical percolation does not survive: $g(\Z^2,1/2) = 0$; more interesting
is the nondifferentiability from the right of the survival function at
criticality~\cite{kesten-zhang}.  When $d\geq 3$, less is known, despite 
the high volume of work on the subject.  The precise value of the 
critical probability $p_c(d)$ is unknown for each $d \geq 3$; for 
$d \geq 19$, mean-field behavior has been shown to hold, implying 
that percolation does not occur at criticality~\cite{hara-slade}.  
This has recently been upgraded with computer assistance and shown to 
hold for $d \geq 11$ \cite{F-vdH}, while the cases of $3 \leq d \leq 10$ 
are still open.  Lower bounds on the survival probability of $\Z^d$ in the 
supercritical region are an area of recent work~\cite{dumin-copin-tassion}, 
but exact behavior near criticality is not known in general.  On the 
question of regularity, the function $g(\Z^d,p)$ is smooth on 
$(p_c(d), 1]$ for each $d\geq 2$ \cite[Theorem $8.92$]{grimmett}.
%%%{\red Can we also give a reference to the first time this result was proved?}

There is less known about the behavior of $g(\TT , \cdot)$ for random trees
than is known on the integer lattice.  We call the random function 
$g(\TT , \cdot)$ the {\em quenched survival function} to distinguish
it from the {\em annealed survival function} $g$, where $g(x)$ is
the probability of survival at percolation parameter $x$ averaged
over the $\GW$ distribution.
For the regular $d$-ary tree, $T_d$, the classical theory of 
branching processes implies that the critical percolation parameter 
$p_c$ is equal to $1/d$, that $g(T_d , 1/d) = 0$ (that is, 
there is no percolation at criticality), and that for $p > p_c$, 
the quantity $g(T_d , p)$ is equal to the largest fixed point 
of $1 - (1-px)^d$ in $[0,1]$ (see, for instance,~\cite{athreya-ney}
for a treatment of this theory).  

For Galton-Watson trees, a comparison of the quenched and annealed
survival functions begins with the following classical result of Lyons,
showing that $p_c$ is the same in both cases.

\begin{thm}[\cite{lyons90}] \label{th:lyons}
Let $\TT$ be the family tree of a Galton-Watson process with mean 
$\E[Z] =: \mu > 1$, and let $p_c (\TT) = \sup \left \{p \in [0,1] : 
g(\TT,p) = 0\right\}$.  Then $p_c(\TT)=\frac{1}{\mu}$ almost surely.
Together with the fact that $g(\TT , 1/\mu) = 0$, 
this implies $g(\TT,p_c) = 0$ almost surely. 
$\Cox$
\end{thm}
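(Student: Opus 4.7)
The plan is to establish both inequalities $p_c(\TT) \geq 1/\mu$ and $p_c(\TT) \leq 1/\mu$ almost surely, and to derive $g(\TT, 1/\mu) = 0$ a.s.\ as a byproduct of the same annealed reduction.

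For the lower bound I would run a quenched first-moment argument. Let $N_n$ denote the number of generation-$n$ vertices still connected to the root after Bernoulli-$p$ percolation; conditional on $\TT$, $\E[N_n \mid \TT] = Z_n p^n$. By classical martingale convergence for Galton-Watson, $Z_n/\mu^n \to W$ with $W < \infty$ a.s., so for any fixed $p < 1/\mu$ the deterministic series $\sum_n Z_n p^n$ is finite for $\GW$-almost every $\TT$. Since $\P[N_n \geq 1 \mid \TT] \leq \E[N_n \mid \TT]$ by Markov, the Borel-Cantelli lemma (applied to the percolation randomness conditional on $\TT$) forces $N_n = 0$ for all large $n$, and hence $g(\TT, p) = 0$. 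Taking $p$ along a countable sequence increasing to $1/\mu$ and using monotonicity of $g(\TT,\cdot)$ yields $p_c(\TT) \geq 1/\mu$ a.s.

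For the upper bound the key ingredient is a 0-1 law at each fixed $p$. Writing $\TT_1,\ldots,\TT_Z$ for the subtrees rooted at the children of the root, the deterministic identity
\[
g(\TT, p) = 1 - \prod_{i=1}^{Z} \bigl(1 - p\, g(\TT_i, p)\bigr)
\]
shows that $\{g(\TT, p) = 0\}$ coincides with $\bigcap_i \{g(\TT_i, p) = 0\}$. Under $\GW$ the subtrees $\TT_i$ are i.i.d.\ $\GW$-trees independent of $Z$, so $q := \P[g(\TT, p) = 0]$ is a fixed point of the offspring pgf $f(s) = \E[s^Z]$. The hypothesis $p_0 = 0$ together with $\mu > 1$ forces $p_1 < 1$, and convexity of $f$ with $f(0) = 0$, $f(1) = 1$, $f'(1) = \mu > 1$ excludes any fixed point in $(0,1)$; hence $q \in \{0,1\}$. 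To rule out $q = 1$ when $p > 1/\mu$, I would use the annealed identification: averaging over $\TT$, the open cluster of the root is itself a Galton-Watson tree with offspring distribution $\Bin(Z,p)$ of mean $p\mu > 1$, and this supercritical nondegenerate process survives with positive probability. Therefore $\E[g(\TT,p)] > 0$ and $q < 1$, whence $q = 0$. Passing to a countable sequence $p_k \downarrow 1/\mu$ and using monotonicity gives $p_c(\TT) \leq 1/\mu$ a.s. The same annealed argument applied at $p = 1/\mu$, where the thinned offspring distribution has mean $1$, combined with the classical extinction of a critical nondegenerate GW process, yields $\E[g(\TT, 1/\mu)] = 0$ and hence $g(\TT, 1/\mu) = 0$ a.s.

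I expect the main obstacle to be the 0-1 law step: making the recursion $q = f(q)$ rigorous requires correctly handling the randomness of $Z$ alongside the subtree-level events, and eliminating the possibility of a third fixed point in $(0,1)$ leans crucially on the reduced-tree assumption $p_0 = 0$. The remaining pieces—first-moment control, the annealed identification of the percolated cluster as a $\Bin(Z,p)$-thinned Galton-Watson tree, and the classical survival dichotomy—are all standard.
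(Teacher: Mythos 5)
Your proof is correct, but it is worth noting that the paper does not prove this statement at all: Theorem~\ref{th:lyons} is quoted from Lyons (1990), whose argument runs through the general identity $p_c(T) = 1/\mathrm{br}(T)$ for arbitrary locally finite trees (via cutset/flow or weighted second-moment arguments) together with the fact that the branching number of a Galton-Watson tree equals $\mu$ almost surely. Your route is the classical elementary one specialized to $\GW$ trees, and all the steps check out: the quenched first-moment bound $g(\TT,p) \le \P[N_n \ge 1 \mid \TT] \le Z_n p^n \to 0$ for $p < 1/\mu$ (you do not even need Borel--Cantelli, since the events $\{N_n \ge 1\}$ are nested); the recursion $1 - g(\TT,p) = \prod_{i=1}^{Z}(1 - p\,g(\TT_i,p))$ showing $\{g(\TT,p)=0\}$ is an inherited property, so that $q = \P[g(\TT,p)=0]$ solves $q = \phi(q)$, and with $\phi(0)=0$, $\phi(1)=1$, $\phi'(1)=\mu>1$ convexity leaves only $q \in \{0,1\}$; and the annealed identification of the root cluster as a Galton-Watson tree with offspring law $\Bin(Z,p)$, which rules out $q=1$ for $p\mu>1$ and gives $\E[g(\TT,1/\mu)]=0$ at criticality (nondegeneracy at mean one is automatic since $\E[(1-p)^Z]>0$). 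The trade-off is scope versus economy: Lyons' theorem yields the critical value for arbitrary trees in terms of the branching number and much finer information, whereas your argument is self-contained, uses only the branching-process structure, and suffices exactly for the statement as used in this paper, including the almost sure vanishing $g(\TT,p_c)=0$.
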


To dig deeper into this comparison, observe first that
the annealed survival probability $g(x)$ is the unique fixed point on
$[0,1)$ of the function $1 - \phi(1 - px)$ where $\phi (z) = \E z^Z$ 
is the probability generating function of the offspring distribution.   
In the next section we show that the annealed survival function $g(p)$
is smooth on $(p_c , 1)$ and, under moment conditions on the offspring 
distribution, the derivatives extend continuously to $p_c$.
This motivates us to ask whether the same holds for the quenched 
survival function.  Our main results show this to be the case, 
giving regularity properties of $g(\TT,p)$ on the supercritical region.  

Let $r_j$ be the coefficents in the asymptotic expansion of
the annealed function $g$ at $p_c$.  These are shown to exist in 
Proposition~\ref{pr:g-rec-rel} below. In Theorem~\ref{th:g-expansion}, 
under appropriate moment conditions, we will construct for each 
$j \geq 1$ a martingale $\{M^{(j)}_n : n \geq 1 \}$ with an almost 
sure limit $M^{(j)}$, that is later proven to equal the $j$th coefficient 
in the aymptotic expansion of the quenched survival function $g$ at $p_c$.
Throughout the analysis, the expression $W$ denotes the martingale limit
$\text{lim}Z_n/\mu^n$.

\begin{thm}[main results] \label{th:main}
~~\\[-2ex]
\begin{enumerate}[(i)]
\item For $\GW$ a.e. tree $\TT$, the quantity $g(\TT,x)$ is smooth 
as a function of $x$ on $(p_c,1)$.  
\item If $\E Z^{2k+1+\beta} < \infty$ for some positive integer $k$ and 
some $\beta > 0$, then we have the $k$-th order approximation 
$$g(\TT,p_c + \ee) = \sum_{j = 1}^k M^{(j)} \ee^j+ o(\ee^k)$$ 
for $\GW$ a.e. tree $\TT$, where $M^{(j)}$ is the quantity given 
explicitly in Theorem~\ref{th:g-expansion}.  Additionally, 
$M^{(1)} = W r_1$ and $\E[M^{(j)}] = r_j$, where $W$ is the martingale
limit for $\TT$ and $j! r_j$ are the derivatives of the annealed
survival function, for which explicit expressions are given in
Proposition~\ref{pr:g-rec-rel}.
\item If $\E Z^{2k^2 + 3 + \beta} < \infty$ for some $\beta>0$, then 
$\GW$-almost surely $g(\TT , \cdot)$ is of class $C^k$ from the right 
at $p_c$ and $g^{(j)} (\TT , p_c^+) = j! M^{(j)}$ for all $j\leq k$; 
see the beginning of Section~\ref{ss:smooth} for calculus definitions.
\end{enumerate}
\end{thm}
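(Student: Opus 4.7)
The plan is to use throughout the fundamental recursion
\begin{equation}
g(\TT, p) = 1 - \prod_{i=1}^{Z}\bigl(1 - p \, g(\TT^{(i)}, p)\bigr), \label{eq:plan-rec}
\end{equation}
where $\TT^{(1)}, \ldots, \TT^{(Z)}$ are the subtrees rooted at the children of the root of $\TT$, and to iterate it $n$ times so that $g(\TT, p)$ becomes a polynomial in the values $g(\TT_v, p)$ over vertices $v$ at level $n$. This iterated form is the workhorse for all three parts.

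For part (i), I would fix $p_0 \in (p_c, 1)$ and apply the analytic implicit function theorem to the $n$-fold iteration of \eqref{eq:plan-rec}. The partial derivative of the right-hand side of \eqref{eq:plan-rec} with respect to $g(\TT^{(i)}, p)$ is $p_0 \prod_{j \neq i} \bigl(1 - p_0\, g(\TT^{(j)}, p)\bigr)$, which is strictly less than $p_0$ on the event of survival of the sibling subtrees. Iterating $n$ times makes the effective contraction rate decay geometrically, and a Borel--Cantelli argument along the tree then gives a uniform contraction on a deterministic $p$-neighborhood of $p_0$ for $n$ large enough, at which point the analytic implicit function theorem delivers smoothness of $g(\TT,\cdot)$ at $p_0$.

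For part (ii), I would propose the ansatz
\[ g(\TT, p_c + \ee) = \sum_{j=1}^{k} M^{(j)}(\TT)\,\ee^j + R_k(\TT, \ee), \]
substitute it into \eqref{eq:plan-rec}, expand, and match coefficients of $\ee^j$ order by order. Using $p_c = 1/\mu$ and $g(\TT, p_c) = 0$, the order-one equation reads $M^{(1)}(\TT) = \mu^{-1} \sum_{i=1}^{Z} M^{(1)}(\TT^{(i)})$; this is the Kesten--Stigum recursion, and together with the mean constraint $\E[M^{(1)}] = r_1$ inherited from the annealed expansion in Proposition~\ref{pr:g-rec-rel} it forces $M^{(1)} = r_1 W$. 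At order $j$, matching yields
\begin{equation}
M^{(j)}(\TT) = \mu^{-1} \sum_{i=1}^{Z} M^{(j)}(\TT^{(i)}) + \Phi_j\bigl(Z;\, (M^{(\ell)}(\TT^{(i)}))_{\ell<j,\, i\le Z}\bigr), \label{eq:plan-Mj}
\end{equation}
which, after iterating to level $n$, identifies the associated $M^{(j)}_n := \E[M^{(j)}(\TT) \mid \F_n]$ as a martingale. I would verify its $L^2$ convergence using $\E Z^{2k+1+\beta} < \infty$; this specific exponent is tuned so that each $\Phi_j$ with $j \le k$ has finite variance when bounded against polynomial moments of the offspring variable. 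It then remains to show $R_k(\TT, \ee) = o(\ee^k)$ almost surely, which I would do by iterating \eqref{eq:plan-rec} to a level $n = n(\ee)$ where survival probabilities are of order $\ee$ and then Taylor-expanding to order $k$.

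Part (iii) is where I expect the real work. The statement upgrades the pointwise expansion of (ii) to an assertion that the derivatives of $g(\TT, \cdot)$ themselves extend continuously to $p_c$ from the right. My plan is to induct on $j \le k$, showing that $\partial_p^j g(\TT, p)$ satisfies for $p > p_c$ a recursion of the same shape as \eqref{eq:plan-Mj}, and then to pass to the limit $p \downarrow p_c$ and compare with the coefficient $M^{(j)}$ from part (ii); identification of the limit with $j!\,M^{(j)}$ then follows from uniqueness of Taylor coefficients. The main obstacle is uniform control of $\partial_p^j g(\TT, p)$ as $p \downarrow p_c$, since naively differentiating the $n$-fold iterate of \eqref{eq:plan-rec} produces $O(n^j)$ Fa\`a di Bruno terms while the vanishing factors $g(\TT_v, p) \to 0$ at level $n$ make these bounds delicate. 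The enhanced moment hypothesis $\E Z^{2k^2 + 3 + \beta} < \infty$ reflects exactly this cost: the $k^2$ arises because the $k$-th derivative of an $n$-fold iterated product decomposes into sums over partitions of size up to $k$, each term requiring $L^{O(k)}$ martingale control and hence roughly $k \cdot k$ powers of $Z$ in the worst case.
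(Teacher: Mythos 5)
Your plan does not close, and the gaps are precisely at the points the paper works hardest. For part (i): the $n$-fold iterate of your recursion expresses $g(\TT,p)$ as a polynomial in the \emph{unknown} functions $g(\TT_v,\cdot)$ at level $n$, so there is no finite system $F(p,s)=0$ with a nondegenerate Jacobian to which the analytic implicit function theorem applies (contrast the annealed function, which does satisfy $s=1-\phi(1-ps)$; that is how the paper proves Corollary~\ref{cor:analytic}, and it is unavailable quenched). The contraction factor $\le p_0^n$ you extract only says $g(\TT,\cdot)$ is a uniform limit of smooth functions of $p$; uniform approximation gives continuity, not smoothness, and to control derivatives you must differentiate through the level-$n$ boundary data, whose regularity is exactly what is in question. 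The paper avoids this circularity by differentiating directly, proving $g'(T,p)=p^{-1}\E_T|B_p|$ (Theorem~\ref{th:c1}) and then showing the class of monomial expectations $\TFV(T,F,\V,p)$ is closed under $d/dp$ with finite, continuous members (Proposition~\ref{pr:JRformofderiv}), finiteness resting on exponential moments of the branching depth under $\GW_p$.

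For part (ii), substituting the ansatz and matching coefficients only derives recursions the $M^{(j)}$ \emph{must} satisfy if an expansion with tree-measurable coefficients exists; existence, i.e.\ $R_k(\TT,\ee)=o(\ee^k)$ a.s., is the theorem itself, and your one sentence for the remainder (iterate to level $n(\ee)$ and Taylor expand) is exactly where all of Section~\ref{sec:critical} lives: the Bonferroni inequalities, the replacement of $g(\TT(u),p)$ by $g(p)$ at level $n(\ee)=\lceil\ee^{-\delta}\rceil$ via a variance/Borel--Cantelli argument (Proposition~\ref{pr:bonf-replace}), the exponential $L^{1+\beta}$ convergence of the Doob-martingale parts of the $X_n^{(j,k)}$ (Theorem~\ref{th:Y-jk-mart}), and the combinatorial cancellation of the predictable parts (Lemma~\ref{lem:constants-sum}). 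Moreover, setting $M_n^{(j)}:=\E[M^{(j)}\mid\F_n]$ presupposes the integrable limit you are trying to construct, and uniqueness of the solution of the smoothing recursion with prescribed mean needs an argument, not just the mean constraint. For part (iii), the obstacle you name (uniform control of $\partial_p^j g$ as $p\downarrow p_c$) is the entire content, and ``pass to the limit in the recursion and compare coefficients'' supplies no mechanism for it: an asymptotic expansion does not imply convergence of derivatives. The paper's route is genuinely different and is needed: the calculus Lemma~\ref{lem:g-upper-bound} shows that an order-$N$ expansion plus failure of $C^k$ at $p_c^+$ forces $|g^{(k+1)}(T,p_c+\ee_n)|$ to blow up faster than $\ee_n^{-N/k}$, while Proposition~\ref{pr:k+1/k} gives the quenched bound $|g^{(k+1)}(T,p_c+\ee)|=O(\ee^{-k-t})$; taking $N=k^2+1$ (this, via part (ii) at order $k^2+1$, is the true source of the $2k^2+3$ moment hypothesis, not a Fa\`a di Bruno partition count) produces the contradiction. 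As it stands, all three parts of your proposal have essential missing steps.
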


\begin{unremarks}
Smoothness of $g(\TT,\cdot)$ on $(p_c,1)$ does not require any 
moment assumptions, in fact even when $\E Z = \infty$ one has 
$p_c = 0$ and smoothness of $g(\TT,\cdot)$ on $(0,1)$.  The
moment conditions relating to expansion at criticality given in (ii) 
are probably not best possible, but are 
necessary in the sense that if $\E Z^k = \infty$ for some $k$
then not even the annealed survival function is smooth
(see Proposition~\ref{pr:nonsmpr} below).
\end{unremarks}
	
The proofs of the first two parts of Theorem~\ref{th:main} are independent
of each other.  Part~$(ii)$ is proved first, in Section~\ref{sec:critical}.
Part~$(i)$ is proved in Section~\ref{sec:c3} after some preliminary 
work in Section~\ref{sec:g'}.  Finally, part~$(iii)$ is proved in 
Section~\ref{sec:cont-at-pc}.  
The key to these results lies in a number of different expressions
for the probability of a tree $T$ surviving $p$-percolation and
for the derivatives of this with respect to $p$.  The first of these expressions 
is obtained via inclusion-exclusion.  The second, Theorem~\ref{th:c1} below,
is a Russo-type formula~\cite{russo} expressing the derivative in terms 
of the {\em expected branching depth}
$$\frac{d}{dp} g(T,p) = \frac{1}{p} \E_T |B_p| \, $$
for $\GW$-almost every $T$ and every $p \in (p_c , 1)$, where
$|B_p|$ is the depth of the deepest vertex $B_p$ whose removal
disconnects the root from infinity in $p$-percolation.
The third generalizes this to a combinatorial construction suitable 
for computing higher moments.  

A brief outline of the paper is as follows.  Section~\ref{sec:prelim}
contains definitions, preliminary results on the annealed 
survival function, and a calculus lemma.  Section~\ref{sec:critical} 
writes the event of survival to depth $n$ as a union over the events of 
survival of individual vertices, then obtains bounds via inclusion-exclusion.  
Let $X_n^{(j)}$ denote the expected number of cardinality $j$ sets
of surviving vertices at level $n$, and let $X_n^{(j,k)}$ 
denote the expected $k$th falling factorial of this quantity.
These quantities diverge as $n \to \infty$ but inclusion-exclusion
requires only that certain signed sums converge as $n \to \infty$.  
The Bonferroni inequalities give upper and lower bounds on 
$g(T,\cdot)$ for each $n$.  Strategically choosing $n$ as a function 
of $\ee$ and using a modified Strong Law argument allows us to ignore 
all information at height beyond $n$ (Proposition \ref{pr:bonf-replace}).  
Each term in the Bonferroni inequalities is then individually 
Taylor expanded, yielding an expansion of $g(T,p_c + \ee)$ with 
coefficients depending on $n$.  
Letting $\TT \sim \GW$ and $n \to \infty$, the variables $X_n^{(j,k)}$
separate into a martingale part and a combinatorial part.  The martingale 
part converges exponentially rapidly (Theorem~\ref{th:Y-jk-mart}).  
The martingale property for the cofficients themselves  
(Lemma~\ref{lem:M_n-mart}) follows from some further 
analysis (Lemma~\ref{lem:constants-sum}) eliminating the 
combinatorial part when the correct signed sum is taken.

Section~\ref{sec:g'} proves the above formula for the derivative of $g$ 
(Theorem~\ref{th:c1}) via a Markov
property for the coupled percolations as a function of the
percolation parameter $p$.  Section~\ref{sec:c3} begins with 
a well-known branching process description of the subtree of
vertices with infinite lines of descent.  It then goes on to describe
higher order derivatives in terms of combinatorial gadgets denoted 
$\TFV$ which are moments of the numbers of edges in certain
rooted subtrees of the percolation cluster and generalize the
branching depth.  We then prove an identity for differentiating these
(Lemma~\ref{lem:iterated}), and apply it repeatedly to $g' (T,p) = 
p^{-1} \E B_p$,
to write $(\partial / \partial p)^k g (T , p)$ as a finite sum
$\sum_\alpha \TFV_\alpha$ of factorial moments of sets of surviving
vertices.  This suffices to prove smoothness of the quenched survival 
function on the supercritical region $p_c < p < 1$.

For continuity of the derivatives at $p_c$, an analytic trick
is required.  If a function possessing an order $N$ asymptotic expansion 
at the left endpoint of an interval $[a,b]$ ceases to be of class
$C^k$ at the left endpoint for some $k$, then the $k+1$st derivative
must blow up faster than $(x-a)^{-N/k}$ (Lemma~\ref{lem:g-upper-bound}).
This is combined with bounds on how badly things can blow up at $p_c$ 
(Proposition~\ref{pr:k+1/k}) to prove continuity from the right at $p_c$ 
of higher order derivatives. 

The paper ends by listing some questions left open, concerning sharp
moment conditions and whether an asymptotic expansion ever 
exists without higher order derivatives converging at $p_c$.

\section{Constructions, preliminary results, and annealed survival} 
\label{sec:prelim} 

\subsection{Smoothness of real functions at the left endpoint}
\label{ss:smooth}

\iffalse
Analyses such as the backbone decomposition of the incipient infinite
cluster or the invasion cluster of a Galton-Watson tree~\cite{MPR2018} 
require knowledge of the behavior of $g(p)$ and $g(\TT , p)$ as 
$p \downarrow p_c$.\fi

Considerable work is required to strengthen
conclusion~$(ii)$ of Theorem~\ref{th:main} to conclusion~$(iii)$.
For this reason, we devote a brief subsection to making the calculus 
statements in Theorem~\ref{th:main} precise.
We begin by recalling some basic facts about one-sided derivatives
at endpoints, then state a useful lemma.

Let $f$ be a smooth function on the nonempty real interval $(a,b)$.
We say that $f$ is of class $C^k$ from the right at $a$ if $f$ and 
its first $k$ derivatives extend continuously to some finite values 
$c_0, \ldots , c_k$ when approaching $a$ from the right.  
This implies, for each $j \leq k$, that 
$\disp \ee^{-1} \left ( f^{(j-1)} (a+\ee) - f^{(j-1)} (a) \right ) 
\to c_j$ as $\ee \downarrow 0$, and therefore that $c_j$ is the $j$th
derivative of $f$ from the right at $a$.  It follows from Taylor's
theorem that there is an expansion with the same coefficients:
\begin{equation} \label{eq:taylor}
f(x) = f(a) + \sum_{j=1}^k \frac{c_j}{j!} (x-a)^j + o(x-a)^k
\end{equation}
as $x \to a^+$.

The converse is not true: the classical example $f(x) = x^k \sin (1/x^m)$
shows that it is possible to have the expansion~\eqref{eq:taylor} 
with $f$ ceasing to be of class $C^j$ once $j(m+1) \geq k$.  On the
other hand, the ways in which the converse can fail are limited.  We
show that if $f$ possesses an order-$k$ expansion as in~\eqref{eq:taylor} 
and if $j \leq n$ is the least integer for which $f \notin C^j$, then 
$f^{(j+1)}$ must oscillate with an amplitude that blows up at least 
like a prescribed power of $(x-a)^{-1}$.  

\begin{lem} \label{lem:g-upper-bound}
Let $f:[a,b]\to\mathbb{R}$ be $C^{\infty}$ on $(a,b)$ with 
\begin{equation}\label{exfordetf}
f(a+\ee) = c_1\ee + \dots + c_k\ee^k + \dots + c_N \ee^N + o(\ee^N)
\end{equation}
for some $k,N$ with $1\leq k < N$, and assume 
\begin{equation} \label{eq:j<k}
\lim_{\ee\to 0} f^{(j)}(a+\ee) = j! c_j
\end{equation}
for all $j$ such that $1 \leq j < k$.  If $f^{(k)}(a+\ee)
\not\to k! c_k$ as $\ee \to 0^+$, then there must exist positive
numbers $u_n \downarrow 0$ such that
$$\left | f^{(k+1)} (u_n) \right | = \omega \Big( u_n^{-\frac{N}{k}} 
   \Big) \, .$$
$\Cox$
\end{lem}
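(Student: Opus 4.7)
After translating so that $a = 0$ (which is permissible since $f$ and its derivatives are unchanged in qualitative nature), set
\[ q(\varepsilon) := \sum_{j=1}^{N} c_j \varepsilon^j, \qquad r(\varepsilon) := f(\varepsilon) - q(\varepsilon), \]
so by \eqref{exfordetf} we have $r(\varepsilon) = o(\varepsilon^N)$. The plan is to prove the contrapositive: if $|f^{(k+1)}(x)| \leq C x^{-N/k}$ for some constant $C$ and all sufficiently small $x > 0$, then $f^{(k)}(x) \to k!\,c_k$ as $x \to 0^+$. This is equivalent to the stated conclusion, because the nonexistence of a sequence $u_n \downarrow 0$ with $|f^{(k+1)}(u_n)| = \omega(u_n^{-N/k})$ is exactly $\limsup_{x \to 0^+} |f^{(k+1)}(x)|\, x^{N/k} < \infty$.

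The central tool is a double evaluation of the $k$-th forward finite difference
\[ \Delta_h^k f(x) := \sum_{i=0}^{k} (-1)^{k-i} \binom{k}{i} f(x+ih), \]
with $0 < h \ll x$ to be chosen as a function of $x$. Expanding each summand in a Taylor series about $x$ to order $k$ with Lagrange remainder, and using the identity $\sum_i (-1)^{k-i}\binom{k}{i} i^j = k!\,\delta_{jk}$ for $0 \leq j \leq k$, I obtain $\Delta_h^k f(x) = h^k f^{(k)}(x) + E_1$ with $|E_1| \leq C_1 h^{k+1} \sup_{t \in [x, x+kh]} |f^{(k+1)}(t)| \leq C_1' h^{k+1} x^{-N/k}$ by the standing hypothesis. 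Separately, splitting $f = q + r$, the polynomial part gives $\Delta_h^k q(x) = h^k q^{(k)}(\xi)$ for some $\xi \in [x, x+kh]$ (mean-value form of finite differences), while
\[ |\Delta_h^k r(x)| \leq 2^k \sup_{t \in [x, (k+1)x]} |r(t)| \leq C_2\, x^N \psi(x), \]
where $\psi(x) := \sup_{t \in [x, (k+1)x]} |r(t)|/t^N \to 0$ as $x \to 0^+$.

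Equating the two evaluations of $\Delta_h^k f(x)$ and dividing by $h^k$ yields
\[ f^{(k)}(x) = q^{(k)}(\xi) + O\bigl(h \cdot x^{-N/k}\bigr) + O\bigl(x^N \psi(x)/h^k\bigr), \]
and $q^{(k)}(\xi) \to q^{(k)}(0) = k!\,c_k$. The two error terms pull against each other in $h$: the first wants $h$ small, the second wants $h$ large. Using the slack afforded by $\psi(x) \to 0$, I would choose $h(x) := x^{N/k} \psi(x)^{1/(2k)}$, which turns the two errors into $\psi(x)^{1/(2k)}$ and $\psi(x)^{1/2}$ respectively, both $o(1)$; the auxiliary condition $h/x \to 0$ needed to justify the finite-difference bounds holds since $N/k > 1$. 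This forces $f^{(k)}(x) \to k!\,c_k$ and completes the contrapositive.

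The main obstacle, and the reason the conclusion is $\omega$ rather than $\Omega$, is precisely that $r$ is $o(\varepsilon^N)$ rather than only $O(\varepsilon^N)$: without $\psi(x)\to 0$, the balancing choice $h \asymp x^{N/k}$ would leave an $O(1)$ residual error, and the same argument would yield only $|f^{(k+1)}(u_n)| = \Omega(u_n^{-N/k})$. The vanishing of $\psi$ is what lets $h$ be pushed just below $x^{N/k}$, supplying the extra factor $\psi(x)^{-1/(2k)} \to \infty$ in the resulting lower bound on $|f^{(k+1)}|$ near the subsequence produced by the mean-value theorem.
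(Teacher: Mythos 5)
Your proposal is correct, but it proves the lemma by a genuinely different route than the paper. You argue the contrapositive: assuming $|f^{(k+1)}(x)| \le C x^{-N/k}$ near $a$ (which is indeed the exact negation of the stated conclusion), you evaluate the $k$-th forward difference $\Delta_h^k f(x)$ in two ways — once by Taylor expansion with Lagrange remainder, once through the split $f = q + r$ with $r = o(\ee^N)$ — and then optimize the step $h(x) = x^{N/k}\psi(x)^{1/(2k)}$ so that both error terms vanish, forcing $f^{(k)}(x) \to k!\,c_k$; the computations check out (the identity $\sum_i(-1)^{k-i}\binom{k}{i}i^j = k!\,\delta_{jk}$, the bound $\sup_{[x,(k+1)x]}|r| \le C_2 x^N\psi(x)$ valid because $h/x \to 0$, and the limit $q^{(k)}(\xi)\to k!c_k$ are all fine). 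The paper instead works directly: after subtracting the polynomial it shows $\liminf f^{(k)} \le 0 \le \limsup f^{(k)}$, extracts intervals on which $f^{(k)}$ transits between $C/2$ and $C$, runs a nested-subinterval argument to force $|f| \gtrsim (b_n-a_n)^k$ somewhere inside, concludes $b_n - a_n = o(\xi_n^{N/k})$ from $f = o(x^N)$, and finally applies the Mean Value Theorem to produce the witnesses $u_n$ explicitly. Each approach buys something: the paper's construction exhibits the sequence $u_n$ concretely and localizes where the blow-up of $f^{(k+1)}$ occurs, while your finite-difference argument is arguably cleaner and, notably, never invokes hypothesis \eqref{eq:j<k} (the convergence of the lower-order derivatives), so it actually establishes a slightly stronger statement; the paper's proof does use that hypothesis when it integrates up from $f^{(k-1)}(0^+)=0$. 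One cosmetic repair: your choice of $h$ degenerates if $\psi(x)=0$ for some $x$, so replace $\psi(x)$ by, say, $\max(\psi(x),x)$, which is positive, still tends to $0$, and leaves every estimate intact.
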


\noindent{\sc Proof:} ~~\\[2ex]
\ul{Step 1:}
Assume without loss of generality that $a=0$.  Also, replacing $f$ by
$f - q$ where $q$ is the polynomial $q(x) := \sum_{j=1}^N c_j x^j$,
we may assume without loss of generality that $c_j = 0$ for $j \leq N$.

\ul{Step 2:}
Fixing $f$ satisfying~\eqref{exfordetf}--\eqref{eq:j<k}, we claim that  
\begin{equation} \label{eq:liminf}
\liminf_{x \downarrow 0} f^{(k)} (x) \leq 0 \, .
\end{equation}
To see this, assume to the contrary and choose $c, \delta > 0$ with
$f^{(k)}(x) \geq c$ on $(0,\delta)$.  Using $f^{(k-1)} (0) = 0$ and 
integrating gives
$$f^{(k-1)} (x) = \int_0^x f^{(k)} (t) \, dt \geq c x$$
for $0 < x < \delta$.  Repeating this argument and using induction, 
we see that $f^{(k-j)} (x) \geq c x^j / j!$, whence $f(x) \geq c x^k / k!$
on $(0,\delta)$ contradicting $f(x) = o(x^N)$.

\ul{Step 3:}
An identical argument shows that $\limsup_{x \downarrow 0} f^{(k)} (x) 
\geq 0$.  Therefore,
$$\liminf_{x \downarrow 0} f^{(k)} (x) \leq 0 \leq 
   \limsup_{x \downarrow 0} f^{(k)} (x) \, .$$
Assuming as well that $f^{(k)} (x) \not \to 0$ as $x \downarrow 0$, 
at least one of these inequalities must be strict.  Without loss of
generality, we assume for the remainder of the proof that it is the
second one.  Because $f^{(k)}$ is continuous, we may fix $C > 0$ and
intervals with both endpoints tending to zero such that $f^{(k)}$ is 
equal to $C$ at one endpoint, $C/2$ at the other endpoint, and is 
at least $C/2$ everywhere on the interval.

\ul{Step 4:}
Let $J = [a,b]$ (not to be confused with the $[a,b]$ from the statement of the lemma) denote one of these intervals.  We next claim that we
can find nested subintervals $J = J_0 \supseteq J_1 \supseteq \cdots \supseteq 
J_k$ with lengths $|J_j| = 4^{-j} |J_0|$ such that 
$$|f^{(k-j)}| > \frac{C}{2 \cdot 4^{j(j+1)/2}} (b-a)^j \mbox{ on } J_j \, .$$
This is easily seen by induction on $j$.  The base case $j=0$ is already 
done.  Assume for induction we have chosen $J_j$.  The function
$f^{(k-j)}$ does not change sign on $J_j$, hence $f^{(k-j-1)}$ is 
monotone on $J_j$.  If $f^{(k-j-1)}$ has a zero in the first half 
of $J_j$, we let $J_{j+1}$ be the last $1/4$ of $J_j$ and let $I$
denote the second $1/2$ of $J_j$.  If $f^{(k-j-1)}$ has a zero in 
the second half of $J_j$, we let $J_{j+1}$ be the first $1/4$ of $J_j$
and $I$ denote the first half of $J_j$.  If $f^{(k-j-1)}$ does not vanish
on $J_j$ we let $J_{j+1}$ be the first or last $1/4$ of $J_j$, whichever
contains the endpoint at which the absolute value of the monotone function
$f^{(k-j-1)}$ is maximized over $J_j$, letting $I$ denote the half of 
$J_j$ containing $J_{j+1}$.  Figure~\ref{fig:intervals} shows an example 
of the case where $f^{(k-j-1)}$ has a zero in the second half of $J_j$.

In all of these cases, $f^{(k-j-1)}$
is monotone and does not change sign on $I$, and its minimum modulus
on $J_{j+1}$ is at least the length of $I \setminus J_{j+1}$ times
the minimum modulus of $f^{(k-j)}$ on $J_j$.  By induction, this is
at least $4^{-(j+1)} (b-a) \times C (b-a)^j / (2 \cdot 4^{j(j+1)/2})$,
which is equal to $C (b-a)^{j+1} / (2 \cdot 4^{(j+1)(j+2)/2})$, establishing
the claim by induction.

\ul{Step 5:} 
Let $[a_n , b_n]$ be intervals tending to zero as in Step~3 and let $\xi_n$ 
denote the right endpoint of the $(k+1)$th nested sub-interval $J_k$ of $[a_n , 
b_n]$ as in Step~4.  Denoting $C' := C / (2 \cdot 4^{k (k+1) / 2})$, 
we have $|f(\xi_n)| \geq C' (b_n-a_n)^k$; together with  
the hypothesis that $f(x) = o(x^N)$, this implies $(b_n-a_n)^k = o(\xi_n^N)$ 
as $n \to \infty$.  Seeing that $b_n - a_n = o(\xi_n)$ for some $\xi_n
\in [a_n , b_n]$ is enough to conclude that $a_n \sim b_n \sim \xi_n$.
By the Mean Value Theorem, because $f^{(k)}$ transits from $C/2$ to $C$
on $[a_n , b_n]$, there is some point $u_n \in [a_n , b_n]$ with
$$|f^{(k+1)} (u_n)| \geq \frac{C/2}{b_n - a_n} \, .$$
We have seen that $(b_n - a_n)^{-1} = \omega (\xi_n^{-N/k})$.  Because
$\xi_n \sim a_n \sim b_n \sim u_n$, this implies 
$|f^{(k+1)} (u_n)| = \omega (u_n^{-N/k})$, proving the lemma.
	\begin{figure}[!ht]
	\centering
	\includegraphics[height=2.5in]{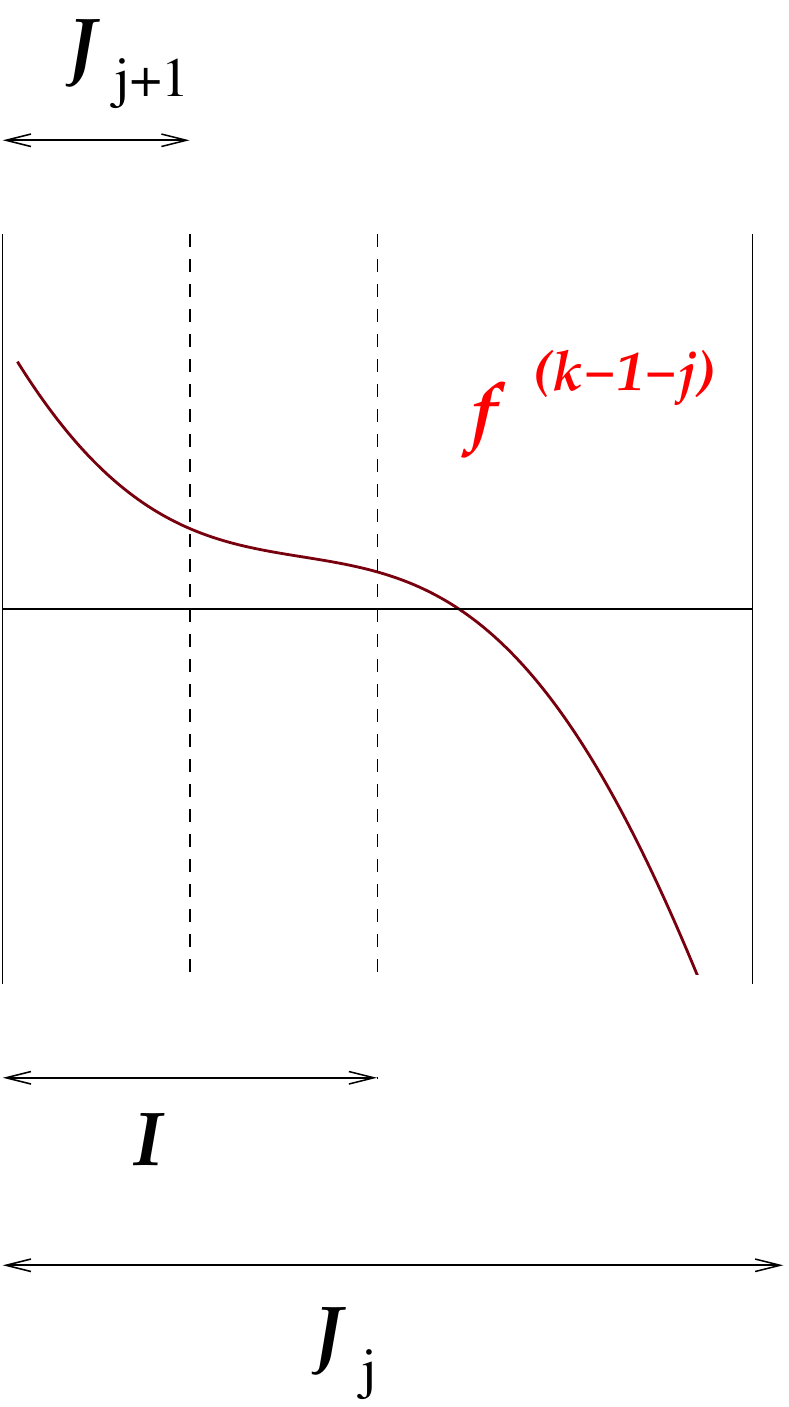}
	\caption{One of four possible cases for the choice of the nested 
	sub-interval $J_{j+1}$}
	\label{fig:intervals}
	\end{figure}
$\Cox$

\subsection{Galton-Watson trees}
	
Since we will be working with probabilities on random trees, it will 
be useful to explicitly describe our probability space and notation. 
We begin with some notation we use for all trees, random or not.
Let $\ulam$ be the canonical {\em Ulam-Harris} tree~\cite{louigi-ford}.
The vertex set of $\ulam$ is the set $\VU : = \bigcup_{n=1}^\infty \N^n$, 
with the empty sequence $\rt = \emptyset$ as the root.  There is an 
edge from any sequence $\bfa = (a_1, \ldots , a_n)$ to any extension 
$\bfa \sqcup j := (a_1, \ldots , a_n , j)$.  The depth of a vertex $v$
is the graph distance between $v$ and $\rt$ and is denoted $|v|$.
We work with trees $T$ that are locally finite rooted subtrees of $\ulam$.
The usual notations are in force: $T_n$ denotes the set of vertices 
at depth $n$; $T(v)$ is the subtree of $T$ at $v$, canonically identified
with a rooted subtree of $\ulam$, in other words the vertex set of 
$T(v)$ is $\{ w : v \sqcup w \in V(T) \}$ and the least common ancestor of 
$v$ and $w$ is denoted $v \wedge w$.  
	
Turning now to Galton-Watson trees, let $\phi (z) := 
\sum_{n=1}^\infty p_n z^n$ be the offspring generating function 
for a supercritical branching process with no death, i.e., $\phi (0) = 0$.  
We recall,
\begin{eqnarray*}
	\phi'  (1) & = & \E Z =: \mu \\ 
	\phi'' (1) & = & \E [Z (Z-1)] 
\end{eqnarray*}
where $Z$ is a random variable with probability generating function $\phi$.  
We will work on the canonical probability space $(\Omega , \F , \P)$
where $\Omega = (\N \times [0,1])^{\VU}$ and $\F$ is the product Borel
$\sigma$-field.  We take $\P$ to be the probability measure making the
coordinate functions $\omega_v = (\deg_v , U_v)$ i.i.d. with the law
of $(Z,U)$, where $U$ is uniform on $[0,1]$ and independent of $Z$. 
%DO WE WANT DEG?
The variables $\{ \deg_v \}$, where $\deg_v$ is interpreted as the number of
children of vertex $v$, will construct the Galton-Watson tree, 
while the variables $\{ U_v \}$ will be used later for percolation.  
Let $\TT$ be the random rooted subtree of $\ulam$ which is the 
connected component containing the root of the set of vertices 
that are either the root or are of the form $v \sqcup j$ such that 
$0 \leq j < \deg_v$.  This is a Galton-Watson tree with offspring
generating function $\phi$.  Let $\T:=\sigma( \left\{ \deg_v \right\})$ 
denote the $\sigma$-field generated by the tree $\TT$.  The $\P$-law
of $\TT$ on $\T$ is $\GW$.
	
As is usual for Galton-Watson branching processes, we denote
$Z_n := |\TT_n|$.  Extend this by letting $Z_n (v)$ denote the
number of offspring of $v$ in generation $|v| + n$; similarly, extend the
notation for the usual martingale $W_n := \mu^{-n} Z_n$ by letting
$W_n (v) := \mu^{-n} Z_n (v)$.  We know that $W_n (v) \to W(v)$
for all $v$, almost surely and in $L^q$ if the offspring distribution
has $q$ moments.   This is stated without
proof for integer values of $q \geq 2$ in~\cite[p.~16]{harris-BP} 
and \cite[p.~33, Remark~3]{athreya-ney}; for a proof for all $q > 1$,
see~\cite[Theorems~0~and~5]{bingham-doney74}.  Further extend this 
notation by letting $v^{(i)}$
denote the $i$th child of $v$, letting $Z_n^{(i)} (v)$ denote
$n$th generation descendants of $v$ whose ancestral line passes
through $v^{(i)}$, and letting $W_n^{(i)} (v) := \mu^{-n} Z_n^{(i)} (v)$.
Thus, for every $v$, $W(v) = \sum_i W^{(i)} (v)$.  For convenience, 
we define $p_c := 1/\mu$, and recall that $p_c$ is in fact $\GW$-a.s. 
the critical percolation parameter of $T$ as per Theorem \ref{th:lyons}.

\subsubsection*{Bernoulli percolation}\label{sec:bpintro}
	
Next, we give the formal construction of Bernoulli percolation
on random trees.  For $0 < p < 1$, simultaneously define Bernoulli$(p)$ 
percolations on rooted subtrees $T$ of $\ulam$ by taking the percolation 
clusters to be the connected component containing $\rtt$ of the induced 
subtrees of $T$ on all vertices $v$ such that $U_v \leq p$.
Let $\F_n$ be the $\sigma$-field generated by the variables 
$\{ U_v , \deg_v : |v| < n \}$.  Because percolation is often imagined
to take place on the edges rather than vertices, we let $U_e$ be a synonym
for $U_v$, where $v$ is the farther of the two endpoints of $e$ from the root.
Write $v \conn_{T,p} w$ if 
$U_e \leq p$ for all edges $e$ on the geodesic from $v$ to $w$ in $T$.  
Informally, $v \conn_{T,p} w$ iff $v$ and $w$ are both in $T$ and 
are connected in the $p$-percolation.  The event of successful $p$-percolation 
on a fixed tree $T$ is denoted 
$H_T (p) := \{ \rtt \conn_{T,p} \infty \}$.  The event of 
successful $p$-percolation on the random tree $\TT$,
is denoted $H_\TT (p)$ or simply $H(p)$.  Let $g(T , p) := 
\P [H_T (p)]$ denote the probability of $p$-percolation
on the fixed tree $T$.  Evaluating at $T = \TT$ gives the random
variable $g(\TT , p)$ which is easily seen to equal the conditional
expectation $\P (H(p) \| \T)$.  Taking unconditional expectations
we see that $g(p) = \E g(\TT , p)$.  

\subsection{Smoothness of the annealed survival function $g$}
	
By Lyons' theorem, $g(p_c) = \E g(\TT , p_c) = 0$.  We now record some 
further properties of the annealed survival function $g$.  
	
\begin{pr} \label{pr:K}
The derivative from the right $K := \partial_+ g(p_c)$ exists and is given by 
\begin{equation} \label{eq:K}
K = \frac{2}{p_c^3 \phi'' (1)} \, .
\end{equation}
where $1/\phi''(1)$ is interpreted as $\lim_{\xi \to 1^-} 1/\phi''(\xi)$.
\end{pr}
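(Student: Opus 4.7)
\noindent{\sc Proof proposal:} The plan is to work directly with the implicit fixed-point equation $g(p) = 1 - \phi(1 - p g(p))$ recorded just above (equivalently, $g(p)$ is the unique fixed point in $[0,1)$ of $x \mapsto 1 - \phi(1-px)$). Setting $\ee := p - p_c$ and $h := h(\ee) = g(p_c+\ee)$, I would first note that $h \downarrow 0$ as $\ee \downarrow 0$: one-sided continuity of $g$ at $p_c$ follows from $g(p_c) = 0$ together with the monotonicity of $g$ and continuity in $p$ of the fixed-point map.

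To handle $\phi''(1)$ being possibly infinite, I would avoid Taylor-expanding to second order and instead work with the remainder $R(u) := 1 - \mu u - \phi(1-u)$, which (by monotonicity of $\phi'$ and the fundamental theorem of calculus) may be written as $R(u) = \int_0^u (\mu - \phi'(1-t))\, dt \ge 0$ on $[0,1]$. Substituting into the fixed-point equation and using $\mu p_c = 1$ collapses $h = \mu(p_c+\ee) h - R((p_c+\ee) h)$ into $\mu \ee h = R((p_c+\ee) h)$, which I would rearrange as
$$\frac{\ee}{h} \;=\; \frac{(p_c+\ee)^2}{\mu} \cdot \frac{R\!\left((p_c+\ee)h\right)}{\bigl((p_c+\ee)h\bigr)^2}.$$

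Letting $\ee \downarrow 0$, the argument $(p_c+\ee)h$ tends to $0$, and a monotone-convergence argument applied to the integral representation of $R$ yields $R(u)/u^2 \to \phi''(1)/2$ as $u \downarrow 0^+$, with the limit interpreted as $+\infty$ when $\phi''(1) = \infty$ (consistent with the statement's convention $1/\phi''(1) = \lim_{\xi \uparrow 1} 1/\phi''(\xi)$). Combining, $\ee/h \to p_c^2 \phi''(1)/(2\mu) = p_c^3 \phi''(1)/2$, hence $K = \lim_{\ee \downarrow 0} h/\ee = 2/(p_c^3 \phi''(1))$, with $K=0$ in the infinite-variance case. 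The one delicate point is controlling $R(u)/u^2$ in the absence of a finite second moment, but the integral formula plus monotonicity of $\phi'$ on $[0,1)$ makes this routine rather than a genuine obstacle; the real content of the argument is simply inverting the fixed-point equation to leading order in $\ee$.
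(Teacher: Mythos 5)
Your proposal is correct and takes essentially the same route as the paper: both invert the fixed-point identity $g(p)=1-\phi(1-pg(p))$ to isolate $g(p)/(p-p_c)$ and then identify the second-order behavior of $\phi$ at $1$ (covering $\phi''(1)=\infty$ by the same limiting convention), the paper via Taylor's theorem with mean-value remainder $\phi''(\xi)$, $\xi\to 1^-$, and you via the integral form of the remainder plus monotone convergence --- a cosmetic difference. Only note the sign slip in your displayed definition: it should read $R(u)=\mu u-\bigl(1-\phi(1-u)\bigr)$, which is what your integral formula $\int_0^u\bigl(\mu-\phi'(1-t)\bigr)\,dt$ and all subsequent manipulations actually use.
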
 	
	
\begin{proof} Let $\phi_p (z) := \phi (1 - p
+ p z)$ be the offspring generating function for the 
Galton-Watson tree thinned by $p$-percolation for $p \in (p_c,1)$.  
The fixed point of $\phi_p$ is $1 - g(p)$.  In other words, $g(p)$ is the
unique $s \in (0,1)$ for which $1 - \phi_p (1-s) = s$, i.e. 
$1 - \phi(1 - p s) = s$.  By Taylor's theorem with Mean-Value remainder, 
there exists a $\xi \in (1 - pg(p),1)$ so that 
$$1 - \phi (1-pg(p)) = pg(p) \phi'(1) - \frac{p^2g(p)^2}{2}\phi''(\xi) 
   = \frac{p}{p_c}g(p) - \frac{p^2 g(p)^2}{2} \phi''(\xi)\,.$$
Setting this equal to $g(p)$ and solving yields 
$$\frac{g(p)}{p - p_c} = \frac{2}{p_c p^2 \phi''(\xi)} \, .$$ 
Taking $p \downarrow p_c$ and noting $\xi \to 1$ completes the proof.
$\Cox$
\end{proof}
	
\begin{cor}\label{cor:analytic} 
$(i)$ The function $g$ is analytic on $(p_c,1)$.  
$(ii)$ If $p_n$ decays exponentially then $g$ is analytic on
$[p_c,1)$, meaning that for some $\ee > 0$ there is an analytic 
function $\gt$ on $(p_c - \ee , 1)$ such that $g(p) = \gt (p) \one_{p > p_c}$.
\end{cor}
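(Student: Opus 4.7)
For part~(i), my plan is to apply the analytic implicit function theorem to $F(p,s) := 1 - \phi(1-ps) - s$. Since $\phi(z) = \sum_n p_n z^n$ has radius of convergence at least $1$, $F$ extends to a jointly analytic function of complex $(p,s)$ in a neighborhood of any real point $(p^*, g(p^*))$ with $p^* \in (p_c, 1)$, because $1 - p^* g(p^*) \in (0,1)$ lies in the interior of the disk of analyticity of $\phi$. The key computation is
\[
\partial_s F(p, g(p)) \;=\; p\,\phi'(1 - p g(p)) - 1,
\]
and I would show this is strictly negative. The map $\Psi_p(s) := 1 - \phi(1-ps)$ is strictly concave on $[0,1]$ (since $\P[Z=0]=0$ together with $\mu>1$ forces $\P[Z\geq 2]>0$, hence $\phi$ is strictly convex), with $\Psi_p(0)=0$, $\Psi_p'(0) = p\mu > 1$, and next fixed point at $g(p)$; strict concavity of $\Psi_p(s) - s$ then forces $\Psi_p'(g(p)) < 1$. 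The analytic IFT now produces a locally unique analytic function through $(p^*, g(p^*))$, which must coincide with $g$.

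For part~(ii) this argument breaks down at $p_c$ because $g(p_c) = 0$ and $\partial_s F(p_c, 0) = p_c\mu - 1 = 0$. The remedy is to factor out the trivial root: since $F(p,0) \equiv 0$, define
\[
H(p, s) \;:=\; \frac{1 - \phi(1-ps)}{s} - 1, \qquad H(p,0) := p\mu - 1,
\]
which is the unique analytic function satisfying $F(p,s) = s\, H(p,s)$. Here is where the exponential-decay hypothesis enters: it ensures $\phi$ is analytic on a disk $|z|<1+\delta$, so $H$ really is jointly analytic in a complex neighborhood of $(p_c, 0)$ rather than merely on $\{s\ne 0\}$. A Taylor expansion of $\phi$ at $1$ gives $\partial_s H(p_c, 0) = -\tfrac{1}{2} p_c^2 \phi''(1) \neq 0$, using $\phi''(1) > 0$ because $\P[Z\geq 2]>0$. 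The analytic IFT then produces an analytic $\gt$ on some $(p_c - \ee, p_c + \ee)$ with $\gt(p_c) = 0$ and $H(p, \gt(p)) = 0$. Since $g(p) \downarrow 0$ as $p \downarrow p_c$ by Proposition~\ref{pr:K} and $H(p, g(p)) = 0$ for $p > p_c$, local uniqueness in the IFT forces $\gt = g$ on $(p_c, p_c + \ee)$. Gluing with part~(i) extends $\gt$ analytically to all of $(p_c - \ee, 1)$.

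The main technical points will be the two nondegeneracy checks: in~(i), exploiting strict concavity to obtain $\partial_s F < 0$ rather than merely $\leq 0$; and in~(ii), justifying that $H$ extends analytically across $s=0$ (not merely smoothly) via the extra room in the radius of convergence of $\phi$, together with the positivity $\phi''(1)>0$. Everything else reduces to routine application of the analytic implicit function theorem.
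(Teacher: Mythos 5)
Your proposal is correct and follows essentially the same route as the paper: part~(i) is the analytic implicit function theorem applied to the fixed-point equation with the nondegeneracy $1 - p\phi'(1-pg(p)) \neq 0$ coming from strict convexity of $\phi$, and part~(ii) divides out the trivial root $s=0$ (your $H(p,s)$ is exactly the paper's $\psi(p,s) = (1-\phi(1-ps))/s$ minus $1$), uses the exponential moments to get joint analyticity of this quotient near $(p_c,0)$, and checks $\partial_s \neq 0$ there, which matches the paper's value $-\tfrac{1}{2}p_c^2\phi''(1) = -\mu/K$. The only cosmetic difference is that you compute this partial derivative directly from the Taylor expansion rather than via Proposition~\ref{pr:K}, and you spell out the identification $\gt = g$ near $p_c^+$ via local uniqueness, which the paper treats more tersely.
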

	
\begin{proof}
Recall that for $p \in (p_c,1)$, $g(p)$ is the unique positive $s$  
that satisfies $s= 1- \phi(1 - ps)$.  It follows that for all 
$p\in(p_c,1)$, $g(p)$ is the unique $s$ satisfying 
$$F(p,s) := s + \phi(1 - ps) - 1 = 0 \, .$$ 
Also note that since $\phi(1 - ps)$ is analytic with respect to 
both variables for $(p,s)\in(p_c,1)\times(0,1)$, this means $F$ is as well.
		
We aim to use the implicit function theorem to show that we can 
parameterize $s$ as an analytic function of $p$ on $(p_c, 1)$; 
we thus must show $\frac{ \partial F}{\partial s} \neq 0$ at 
all points $(p,g(p))$ for $p \in (p_c,1)$.  Direct calculation gives 
$$\frac{\partial F}{\partial s} = 1 - p \phi'(1 - ps) \, . $$  
Because $\phi$ is strictly convex on $(p_c,1)$, we see that 
$\frac{\partial F}{\partial s}$ is positive for $p \in (p_c,1)$ 
at the fixed point.  Therefore, $g(p)$ is analytic on $(p_c,1)$.  

To prove~$(ii)$, observe that $\phi$ extends analytically to a segment
$[0,1+\ee]$, which implies that $1 - \phi (1-ps)$ is analytic on 
a real neighborhood of zero.  Also $1 - \phi (1-ps)$ vanishes at
$s=0$, therefore $\psi (p,s) := (1 - \phi(1-ps)) / s$ is analytic 
near zero and for $(p,s) \in (p_c,1) \times (0,1)$, the least positive
value of $s$ satisfying $\psi (p,s) = 1$ yields $g(p)$.  Observe that
$$\frac{\partial \psi}{\partial p} (p_c,0) = \lim_{s \to 0}
   \frac{s \phi' (1 - p_c s)}{s} = \phi'(1) = \mu \, .$$
By implicit differentiation,
$$\partial_+ g(p_c) 
   = - \frac{\partial \psi / \partial s}{\partial \psi / \partial p} (p_c,0)$$
which is equal to $1/K$ by Proposition~\ref{pr:K}.  In particular, 
$(\partial \psi / \partial s) (p_c,0) = -\mu / K$ is nonvanishing.
Therefore, by the analytic implicit function theorem, solving 
$\psi (p,s) = 1$ for $s$ defines an analytic function $\gt$ taking 
a neighborhood of $p_c$ to a neighborhood of zero, with $\gt (p) > 0$
if and only if $p > p_c$.  We have seen that $\gt$ agrees with $g$
to the right of $p_c$, proving~$(ii)$.
$\Cox$
\end{proof}

In contrast to the above scenario in which $Z$ has exponential moments
and $g$ is analytic at $p_c^+$, the function $g$ fails to be smooth at
$p_c^+$ when $Z$ does not have all moments.
The next two results quantify this: no $k$th moment implies $g \notin
C^k$ from the right at $p_c$, and conversely, $\E Z^k < \infty$ implies
$g \in C^j$ from the right at $p_c$ for all $j < k/2$.  

\begin{pr}\label{pr:nonsmpr} Assume $k\geq 2$, $\E[Z^k]<\infty$, 
and $\E[Z^{k+1}]=\infty$.  Then $g^{(k+1)}(p)$ does not extend 
continuously to $p_c$ from the right.
\end{pr}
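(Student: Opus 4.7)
The plan is to argue by contradiction, leveraging the fixed-point equation $\phi(1 - p g(p)) = 1 - g(p)$ to transfer smoothness information between $g$ at $p_c^+$ and $\phi$ at $w = 1^-$. Assuming $g^{(k+1)}$ extends continuously to $p_c^+$, I will aim to conclude that $\phi$ must be of class $C^{k+1}$ from the left at $w=1$, which will contradict $\E Z^{k+1} = \infty$.

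First I would verify that $\phi^{(k+1)}$ does not extend continuously to $w=1^-$: termwise differentiation of the generating function gives
$$\phi^{(k+1)}(w) \;=\; \sum_{n \geq k+1} n(n-1)\cdots(n-k) \, p_n \, w^{n-k-1},$$
and monotone convergence as $w \uparrow 1$ yields the limit $\E[Z(Z-1)\cdots(Z-k)] = \infty$ from the hypothesis $\E Z^{k+1} = \infty$. Next I would argue that a continuous extension of $g^{(k+1)}$ at $p_c^+$ in fact forces $g$ to be of class $C^{k+1}$ from the right at $p_c$ in the sense of Section~\ref{ss:smooth}: $g$ is analytic on $(p_c,1)$ by Corollary~\ref{cor:analytic}, so a continuous extension bounds $g^{(k+1)}$ on a right-neighborhood of $p_c$, which makes $g^{(k)}$ Lipschitz there and hence gives a continuous extension of $g^{(k)}$; iterating downward then produces continuous extensions of $g^{(j)}$ for every $j \leq k+1$.

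I would then set $L(p) := 1 - p g(p)$, so that the fixed-point relation becomes $\phi(L(p)) = 1 - g(p)$. Under the preceding step, $L$ is $C^{k+1}$ from the right at $p_c$ with $L(p_c) = 1$ and $L'(p_c^+) = -p_c K$, where $K = 2/(p_c^3 \phi''(1))$ from Proposition~\ref{pr:K}. Since $\P[Z \geq 1]=1$ and $k \geq 2$, we have $\E Z^2 \leq \E Z^k < \infty$, hence $\phi''(1) < \infty$ and $K > 0$, so $L'(p_c^+) \neq 0$. A one-sided inverse function theorem then produces a $C^{k+1}$ local inverse $L^{-1}$ defined on a left-neighborhood $(1-\delta,1]$ of $w=1$, and on this neighborhood
$$\phi(w) \;=\; 1 - g\bigl(L^{-1}(w)\bigr)$$
is a composition of $C^{k+1}$ functions, so $\phi$ would be $C^{k+1}$ from the left at $w=1$, contradicting the first step.

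The main obstacle I anticipate is the one-sided calculus at the endpoints: ensuring that the boundedness-to-continuous-extension induction is rigorous all the way down to $g^{(0)}$, and that the one-sided inverse function theorem and chain rule yield genuine $C^{k+1}$ behavior from the left at $w=1$ rather than merely in the interior. These are standard consequences of the calculus framework set up in Section~\ref{ss:smooth}, but care is needed to check that the composition $1 - g \circ L^{-1}$ really matches $\phi$ on a one-sided neighborhood of $w=1$ so that the contradiction with the first step is legitimately drawn.
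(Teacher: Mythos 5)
Your proof is correct, but it takes a genuinely different route from the paper's. The paper differentiates the fixed-point identity $1-\phi(1-pg(p))=g(p)$ exactly $k+1$ times, isolates $g^{(k+1)}(p)$ as a ratio with denominator $1-p\phi'(1-pg(p))$, and notes that, under the standing assumption that $\lim_{p\to p_c^+}g^{(j)}(p)$ exists for all $j\le k$, every numerator term is $O(1)$ except the single term involving $\bigl(g(p)+pg'(p)\bigr)^{k+1}\phi^{(k+1)}(1-pg(p))$, which diverges because $\E[Z^{k+1}]=\infty$ and $g(p)+pg'(p)\to p_cK>0$; this gives the slightly stronger conclusion $g^{(k+1)}(p)\to\infty$ as $p\downarrow p_c$. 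You instead argue by contradiction, pushing regularity in the opposite direction through the change of variables $w=L(p)=1-pg(p)$: a continuous extension of $g^{(k+1)}$ forces, via your boundedness--Lipschitz downward induction, continuous extensions of all lower derivatives, and then the one-sided inverse function theorem at $L'(p_c^+)=-p_cK\neq 0$ together with the chain rule makes $\phi=1-g\circ L^{-1}$ of class $C^{k+1}$ from the left at $w=1$, contradicting $\phi^{(k+1)}(w)\uparrow\E[Z(Z-1)\cdots(Z-k)]=\infty$. Both arguments rest on the same two pillars (the implicit equation and divergence of $\phi^{(k+1)}$ at $1^-$), but the paper's direct computation buys the quantitative blow-up of $g^{(k+1)}$, while your route avoids the $(k+1)$-fold differentiation bookkeeping and, as a bonus, rigorously justifies what the paper dismisses in one sentence — the case where some lower-order limit fails to exist — since your observation that a bounded $g^{(k+1)}$ forces all lower derivatives to extend is precisely the missing justification. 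The endpoint calculus you flag does go through: $L$ is analytic on $(p_c,1)$ by Corollary~\ref{cor:analytic} and strictly decreasing near $p_c$, and $K$ is finite and positive because $k\ge 2$ gives $\phi''(1)<\infty$ while supercriticality gives $\phi''(1)>0$, so the one-sided inverse and the Fa\`a di Bruno expansion of the composition have continuous limits at the endpoint.
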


\begin{proof}
If $\underset{p\to p_c^+}{\text{lim}}g^{(j)}(p)$ does not exist for any $j\leq k$, then we are done.  Hence, we will assume all such limits exist.  Next we take the $(k+1)$th derivative of each side of the expression $1-\phi(1-pg(p))=g(p)$ in order to get an equality of the form \begin{align}g^{(k+1)}(p) &= \sum_{j=1}^N p^{a_j}\cdot g^{(b_{j,1})}(p)^{c_{j,1}}\dots g^{(b_{j,\ell(j)})}(p)^{c_{j,\ell(j)}}\cdot\phi^{(d_j)}(1-pg(p))\nonumber \\
&\qquad+(-1)^k\Big(g(p)+pg'(p)\Big)^{k+1}\phi^{(k+1)}(1-pg(p))+p\phi'(1-pg(p))g^{(k+1)}(p) \label{kp1derives}\end{align}
where the $b_{j,i}$'s and $d_j$'s are all less than or equal to $k$.  Denoting the sum in \eqref{kp1derives} as $S_{k+1}(p)$ and solving for $g^{(k+1)}(p)$, we now get $$g^{(k+1)}(p)=\frac{S_{k+1}(p)+(-1)^k\Big(g(p)+pg'(p)\Big)^{k+1}\phi^{(k+1)}(1-pg(p))}{1-p\phi'(1-pg(p))}.$$Since we're assuming that $g^{(j)}(p)=O(1)$ as $p\downarrow p_c$ for all $j\leq k$, and because $\E[Z^j]<\infty\implies\phi^{(j)}(1-pg(p))=O(1)$ as $p\downarrow p_c$, it follows that $S_{k+1}(p)=O(1)$.  Combining this with the fact that $\E[Z^{k+1}]=\infty\implies\phi^{(k+1)}(1-pg(p))\to\infty$ as $p\downarrow p_c$, and that $g(p)+pg'(p)\to p_c K>0$ as $p\downarrow p_c$, we now see that the numerator in the above expression for $g^{(k+1)}(p)$ must go to infinity as $p\downarrow p_c$, which means $g^{(k+1)}(p)$ must as well.
$\Cox$
\end{proof}

In Section~\ref{ss:asymp} we will prove the following partial converse.

\begin{pr}\label{pr:simfannc}
For each $k\geq 1$, if $\E[Z^{2k+1}]<\infty$, then $g \in C^k$ from 
the right at $p_c$.
\end{pr}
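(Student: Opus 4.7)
The plan is to reduce the regularity of the annealed survival function $g$ at $p_c^+$ to the regularity of the inverse of the offspring generating function $\phi$ at $z = 1^-$, by reparametrizing the defining fixed-point equation. Since $\phi \colon [0,1] \to [0,1]$ is a bijection, the equation $1 - \phi(1 - pg(p)) = g(p)$ rearranges to $\phi^{-1}(1 - g(p)) = 1 - p\,g(p)$, and solving for $p$ gives
\[
p \;=\; p(s) \;:=\; \frac{1 - \phi^{-1}(1-s)}{s}, \qquad s > 0, \qquad p(0^+) := p_c.
\]
By Corollary~\ref{cor:analytic} together with the formula $g'(p) = \phi'(1-pg)\,g(p)/(1 - p\phi'(1-pg)) > 0$ on $(p_c,1)$, the function $g$ is a smooth strictly increasing bijection from $(p_c,1)$ onto $(0,g(1))$ whose inverse is $p(s)$. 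Hence it suffices to show that $p \in C^k$ at $s = 0^+$ with $p'(0) \neq 0$; the one-sided inverse function theorem then yields $g \in C^k$ from the right at $p_c$.

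I would first exploit the hypothesis $\E[Z^{2k+1}] < \infty$ to conclude that every factorial moment $\phi^{(j)}(1^-) = \E[Z(Z-1)\cdots(Z-j+1)]$ is finite for $j \leq 2k+1$. The Weierstrass $M$-test then makes each $\phi^{(j)}$ continuous on $[0,1]$, so $\phi \in C^{2k+1}([0,1])$ with one-sided regularity at $z = 1$. Combined with $\phi'(1) = \mu > 0$, the one-sided $C^m$ inverse function theorem gives $\phi^{-1} \in C^{2k+1}$ on some left-neighborhood of $y = 1$. Consequently $h(s) := 1 - \phi^{-1}(1 - s)$ belongs to $C^{2k+1}$ on $[0,\eta]$ with $h(0) = 0$ and $h'(0) = (\phi^{-1})'(1) = 1/\mu = p_c$. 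Writing $p(s) = h(s)/s = \int_0^1 h'(st)\,dt$, differentiation under the integral gives $p^{(j)}(s) = \int_0^1 t^j\,h^{(j+1)}(st)\,dt$, continuous for all $j \leq 2k$ by dominated convergence; hence $p \in C^{2k}$ from the right at $s = 0$.

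A direct computation yields $(\phi^{-1})''(1) = -\phi''(1)/\phi'(1)^3 = -p_c^3\,\phi''(1)$, so that $p'(0) = \tfrac12 h''(0) = \tfrac12\,\phi''(1)/\phi'(1)^3 = 1/K$, which is strictly positive by Proposition~\ref{pr:K}. Thus $p$ is a strictly increasing $C^{2k}$ map on $[0,\delta]$ with nonvanishing derivative at $0$, and the one-sided inverse function theorem gives $g = p^{-1} \in C^{2k}$ from the right at $p_c$, which trivially implies the stated conclusion $g \in C^k$ (the proof in fact yields considerable room to spare, reflecting a factor-of-two slack in the hypothesis $2k+1$ relative to the sharper guess $k+1$). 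The main technical points are the one-sided inverse function theorem at an endpoint (used twice, for $\phi^{-1}$ at $y = 1$ and for $g$ at $p = p_c$) and the transfer of regularity through division by $s$ (where one degree of smoothness is lost); both are standard and pose no substantive obstacle, but should be spelled out carefully so that one-sided continuity of each derivative is propagated correctly at every step.
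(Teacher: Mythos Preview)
Your argument is correct and takes a genuinely different, more direct route than the paper. The paper proceeds by induction on $k$: it first invokes Proposition~\ref{pr:g-rec-rel} to obtain an order-$2k$ asymptotic expansion of $g$ at $p_c^+$, then differentiates the fixed-point identity $g = 1 - \phi(1-pg)$ directly $k$ and $k+1$ times, tracking terms symbolically to establish the blow-up bounds $g^{(k)}(p_c+\ee) = O(\ee^{-1})$ and $g^{(k+1)}(p_c+\ee) = O(\ee^{-2})$, and finally feeds these into the calculus oscillation Lemma~\ref{lem:g-upper-bound} (with $N = 2k$) to force $g^{(k)}(p_c+\ee) \to k!\,r_k$.

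Your reparametrization $p(s) = (1 - \phi^{-1}(1-s))/s$ bypasses all of this. From $\E[Z^{2k+1}] < \infty$ you get $\phi \in C^{2k+1}([0,1])$ with $\phi'(1)=\mu\neq 0$, hence $\phi^{-1}$ and $h$ are $C^{2k+1}$ at the endpoint; Hadamard's formula $p(s)=\int_0^1 h'(st)\,dt$ costs one derivative, giving $p \in C^{2k}$ at $0^+$ with $p'(0)=1/K\neq 0$; and a second one-sided inverse function theorem returns $g \in C^{2k}$ at $p_c^+$. Each step is standard, and you have correctly identified the two places needing care (the endpoint inverse function theorem and the division by $s$).

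Two remarks on what each approach buys. First, your argument actually yields $g\in C^{2k}$, twice as strong as stated; optimizing, $\E[Z^{k+1}]<\infty$ already gives $g\in C^k$, which together with Proposition~\ref{pr:nonsmpr} comes close to resolving the paper's own open Question~1 on the sharp moment condition. Second, the paper's more laborious route is not wasted effort: Lemma~\ref{lem:g-upper-bound} is developed precisely here and then reused as the key analytic ingredient in the quenched continuity result (Theorem~\ref{th:contderivpc}), where no such clean reparametrization is available.
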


\subsection{Expansion of the annealed survival function $g$ at $p_c^+$}
\label{ss:asymp}

A good part of the quenched analysis requires only the expansion of 
the annealed survival function $g$ at $p_c^+$, not continuous derivatives.  
Proposition~\ref{pr:g-rec-rel} below shows that $k+1$ moments are enough 
to give the order $k$ expansion.  Moreover, we give explicit expressions
for the coefficients.  We require the following combinatorial construction:  
let $\mathcal{C}_j(k)$ denote the set of compositions of $k$ into 
$j$ parts, i.e. ordered $j$-tuples of positive integers 
$(a_1,\ldots,a_j)$ with $a_1 + \cdots + a_j = k$; for a composition 
$a = (a_1,\ldots,a_j)$, define $\ell(a) = j$ to be the \emph{length} 
of $a$, and $|a| = a_1 + \cdots + a_j$ to be the \emph{weight} of $a$.  
Let $\mathcal{C}(\leq k)$ denote the set of compositions with weight 
at most $k$. 

\begin{pr} \label{pr:g-rec-rel}
Suppose $\E[Z^{k+1}] < \infty$.  Then there exist constants 
$r_1,\dots ,r_k$ such that $g(p_c+\ee)=r_1\ee+\cdots +r_k\ee^k+o(\ee^{k})$.  
Moreover, the $r_j$'s are defined recursively via
\begin{eqnarray}
r_1 & = & g'(p_c) = \frac{2}{p_c^3 \phi''(1)} \, ; \nonumber \\[2ex]
r_j & = & \frac{2}{p_c^2 \phi''(1)} 
   \sum_{\substack{a \in \mathcal{C}(\leq j) \\ a \neq (j) }}	 
   r_{a_1}\cdots r_{a_{\ell(a)}} 
   \binom{\ell(a)+1}{j - |a|}p_c^{|a| + \ell(a)+1 - j}(-1)^{\ell(a)} 
   \frac{\phi^{(\ell(a))+1}(1)}{(\ell(a)+1)!} \, . \label{eq:r}
\end{eqnarray}
\end{pr}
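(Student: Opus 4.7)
The plan is to substitute the desired expansion into the defining fixed-point equation $g(p) = 1 - \phi(1 - p\,g(p))$ and read off the $r_j$ by matching coefficients. The moment hypothesis $\E[Z^{k+1}] < \infty$ guarantees $\phi^{(k+1)}(1) < \infty$, so $\phi \in C^{k+1}$ on $[0,1]$ and Taylor's theorem gives
\begin{equation*}
1-\phi(1-y) \;=\; \sum_{i=1}^{k+1} \frac{(-1)^{i+1}\phi^{(i)}(1)}{i!}\, y^i + o(y^{k+1}) \qquad (y \to 0^+).
\end{equation*}
I would apply this at $y = p\,g(p) = \Theta(\ee)$ (using Proposition~\ref{pr:K} to control the magnitude of $g$), use $\phi'(1) = 1/p_c$ and $p\mu - 1 = \ee/p_c$ to cancel the $i=1$ piece against the left-hand side $g(p)$, and then divide by $g(p)$ to convert the fixed-point equation into
\begin{equation*}
\frac{\ee}{p_c} \;=\; \sum_{i=2}^{k+1} \frac{(-1)^i\phi^{(i)}(1)}{i!}\, p^i\, g(p)^{i-1} + o(\ee^k),
\end{equation*}
where the error collapses from $o(\ee^{k+1})$ to $o(\ee^k)$ upon division by $g(p) = \Theta(\ee)$.

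Next I would induct on $k$. The base case $k=1$ is Proposition~\ref{pr:K}. For the inductive step, assume $g(p_c + \ee) = \sum_{j=1}^{k-1} r_j \ee^j + o(\ee^{k-1})$ with the $r_j$ as given by the recursion, and write $g = T + R$ where $T(\ee) := \sum_{j=1}^{k} t_j \ee^j$ (with $t_j = r_j$ for $j < k$, and $t_k$ yet to be determined) and $R := g - T = o(\ee^{k-1})$. Substituting into the displayed equation and expanding $(T + R)^{i-1}$ binomially, the only $R$-contribution that is not already $o(\ee^k)$ comes from the $i=2$ term and equals $\tfrac{1}{2}\phi''(1)\,p^2 R$; the cross-terms for $i \geq 3$ are bounded via $T^{i-2}R = O(\ee^{i-2})\cdot o(\ee^{k-1}) = o(\ee^k)$, and the higher-order pieces $R^m$ for $m \geq 2$ are $o(\ee^{m(k-1)}) = o(\ee^k)$ since $k \geq 2$. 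Rearranging gives
\begin{equation*}
\tfrac{1}{2}\phi''(1)\,p^2\, R \;=\; A(\ee) + o(\ee^k), \qquad A(\ee) \;:=\; \frac{\ee}{p_c} - \sum_{i=2}^{k+1} \frac{(-1)^i\phi^{(i)}(1)}{i!}\, p^i\, T^{i-1}.
\end{equation*}
Because $\tfrac{1}{2}\phi''(1)\,p^2 \to \tfrac{1}{2}\phi''(1)\,p_c^2 > 0$, it suffices to choose $t_k$ so that $A(\ee) = o(\ee^k)$, which will both identify $t_k$ and force $R = o(\ee^k)$.

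The final step is combinatorial coefficient extraction. Writing $p^{\ell+1} = \sum_m \binom{\ell+1}{m} p_c^{\ell+1-m}\,\ee^m$ and $T^{\ell} = \sum_a t_{a_1}\cdots t_{a_\ell}\,\ee^{|a|}$ (sum over compositions $a$ of length $\ell$), the coefficient of $\ee^j$ in $A$ for $1 \leq j \leq k$ is a linear combination of $t_1,\ldots,t_j$. By the induction hypothesis it vanishes for $j < k$. For $j = k$, the variable $t_k$ enters only through the $i=2$, $a = (k)$ contribution, with coefficient $\tfrac{1}{2}\phi''(1)\,p_c^2$; isolating $t_k$ and dividing by this factor yields the recursion~\eqref{eq:r} under the correspondence $\ell(a) = i - 1$ and $m = j - |a|$ — this identifies the prefactor $2/(p_c^2\phi''(1))$, the binomial $\binom{\ell(a)+1}{j-|a|}$, the sign $(-1)^{\ell(a)}$, the power $p_c^{|a| + \ell(a) + 1 - j}$, and the exclusion ``$a \neq (j)$'' (the single term isolated). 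The main obstacle is this bookkeeping; the analytic content beyond Taylor's theorem and Proposition~\ref{pr:K} is limited to verifying that the $R$-cross-terms at orders $i \geq 3$ are genuinely $o(\ee^k)$, which uses only $R = o(\ee^{k-1})$ from the induction hypothesis.
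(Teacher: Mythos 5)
Your proposal is correct and follows essentially the same route as the paper: both start from the fixed-point identity $g(p)=1-\phi(1-pg(p))$, Taylor expand $\phi$ at $1$ to order $k+1$ (using $\E[Z^{k+1}]<\infty$), divide by $g(p)=\Theta(\ee)$ via Proposition~\ref{pr:K}, and induct on $k$, isolating the order-$k$ unknown through the $\tfrac{1}{2}\phi''(1)p_c^2$ factor coming from the $i=2$ term. Your split $g=T+R$ with an undetermined $t_k$ is just a cosmetic repackaging of the paper's quantity $p_k(\ee)=\bigl(g(p_c+\ee)-\sum_{j<k}r_j\ee^j\bigr)/\ee^k$, and your coefficient extraction reproduces the recursion~\eqref{eq:r} exactly as in the paper.
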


\begin{proof}
To start, we utilize the identity $1- \phi(1 - pg(p)) = g(p)$ for 
$p = p_c + \ee$, and take a Taylor expansion: 
$$\sum_{j = 1}^{k+1} (p_c + \ee)^j g(p_c + \ee)^j 
   (-1)^{j-1}\frac{\phi^{(j)}(1)}{j!} 
   + o(((p_c + \ee)g(p_c + \ee))^{k+1}) = g(p_c + \ee)\,.$$
Divide both sides by $g(p_c + \ee)$ and bound $g(p_c + \ee) = O(\ee)$ 
to get 
\begin{equation}\label{josh55}\sum_{j = 1}^{k+1} (p_c + \ee)^j 
   g(p_c + \ee)^{j-1} (-1)^{j-1}\frac{\phi^{(j)}(1)}{j!} - 1 = o(\ee^k) \, .
\end{equation}
Proceeding by induction, if we assume that the proposition holds for 
all $j<k$ for some $k\geq 2$, and we set 
$$p_k(\ee):=\frac{g(p_c+\ee)-\sum_{j=1}^{k-1}r_j\ee^j}{\ee^k} \, ,$$ 
then \eqref{josh55} gives us
\begin{align}
o(\ee^k) 
&= \sum_{j = 1}^{k+1}(p_c + \ee)^j g(p_c + \ee)^{j-1}(-1)^{j-1}
   \frac{\phi^{(j)}(1)}{j!} - 1\nonumber \\
&= \sum_{j=1}^{k+1}(p_c + \ee)^j \left( \sum_{i = 1}^{k-1}r_i \ee^i  
   + p_k(\ee)\ee^k\right)^{j-1}(-1)^{j-1}\frac{\phi^{(j)}(1)}{j!} - 1 \, . 
   \label{eq:g-deriv-expansion}
\end{align}     
Noting that the assumption that the proposition holds for $j=k-1$ 
implies that $p_k(\ee)=o(\ee^{-1})$, we find that the expression 
on the right hand side in~\eqref{eq:g-deriv-expansion} is the sum of a 
polynomial in $\ee$, the value $\frac{-p_c^2\phi''(1)}{2}p_k(\ee)\ee^k$, 
and an error term which is $o(\ee^{k})$.  This implies that all terms 
of this polynomial that are of degree less than $k$ must cancel, 
and that the sum of the term of order $k$ and 
$\frac{-p_c^2\phi''(1)}{2}p_k(\ee)\ee^k$ must be $o(\ee^k)$.  This 
leaves only terms of degree greater than $k$.  It follows 
that $p_k(\ee)$ must be equal to $C+o(1)$, for some constant $C$.  
		
To complete the induction step, it remains to show that $C=r_k$.  
To do so we must find the coefficient of $\ee^k$ in each term.  
We use the notation $[\ee^j]$ to denote the coefficient of $\ee^j$.
For any $j$, we calculate 
\begin{align*}
[\ee^k] ~(p_c + \ee)^j \left(\sum_{i = 1}^{k-1} r_i \ee^i \right)^{j-1} 
   &= \sum_{r = 1}^k \left( [\ee^r]~\left(\sum_{i = 1}^{k-1} 
      r_i \ee^i \right)^{j-1}  \right)
      \left([\ee^{k - r}]~(p_c + \ee)^{j}\right) \\ 
   &= \sum_{r = 1}^k \left(\sum_{a \in \mathcal{C}_{j-1}(r)} 
      r_{a_1}\cdots r_{a_{j-1}}\right) \binom{j}{k-r} p_c^{j + r - k} \,.
\end{align*}
Putting this together with~\eqref{eq:g-deriv-expansion} we now obtain 
the desired equality $C=r_k$.  Finally, noting that the base case 
$k=1$ follows from Proposition~\ref{pr:K}, we see that the proposition 
now follows by induction. 
$\Cox$
\end{proof}

\noindent{\sc Proof of Proposition}~\ref{pr:simfannc}:  Induct on $k$.
For the base case $k=1$, differentiate both sides of the expression 
$1 - \phi(1-pg(p)) = g(p)$ and solve for $g'(p)$ to get 
$$g'(p)=\frac{g(p)\phi'(1-pg(p))}{1-p\phi'(1-pg(p))}.$$
The numerator and denominator converge to zero as $p \downarrow p_c$.
We would like to apply L'H{\^o}pital's rule but we have to be 
careful because all we have {\em a priori} is the expansion at $p_c$,
not continuous differentiability.  We verify that the denominator satisfies
\begin{equation} \label{eq:order}
1 - p \phi' (1 - p g(p)) \sim \mu \cdot (p - p_c)
\end{equation}
as $p = p_c + \ee \downarrow p_c$ by writing the denominator as  
$1 - (p_c + \ee) (\mu - \phi''(1) p_c r_1 \ee + o(\ee)) = 
\ee (\mu - p_c^2 r_1 \phi''(1) + o(1))$, then plugging in
$r_1 = 2 / (p_c^3 \phi''(1))$ to obtain $\ee (\mu + o(1))$.
Similarly, the numerator is equal to $(r_1 \ee + o(\ee)) (\mu + o(1)) 
= (\mu r_1 + o(1)) \ee$.  Dividing yields $g'(p_c + \ee) = r_1 + o(\ee)$,
verifying the proposition when $k=1$.

Next, inserting our assumption of $2k+1$ moments into 
Proposition~\ref{pr:g-rec-rel} gives the order-$2k$ expansion
$$g(p_c + \ee) = r_1 \ee + \cdots + r_{2k} \ee^{2k} + o(\ee^{2k})$$
as $\ee \downarrow 0$.  We assume for induction that $g^{(j)} (p) 
\to j! \, r_j$ as $p \downarrow p_c$ for $1 \leq j < k$ and must
prove it for $j=k$.  We claim it is enough to show that 
$g^{(k+1)} (p_c + \ee) = O(\ee^{-2})$.  To see why, assume that
this holds but that $g^{(k)} (p)$ fails to converge to $k! r_k$ 
as $p \downarrow p_c$.  Setting $N = 2k$, the conclusion of 
Lemma~\ref{lem:g-upper-bound} would then yield a sequence of values
$\ee_n \downarrow 0$ with $g^{(k+1)} (p_c + \ee_n) / \ee_n^{-2}$ 
tending to infinity, which would be a contradiction.  

To show $g^{(k+1)} (p_c + \ee) = O(\ee^{-2})$ we begin by showing
that $g^{(k)} (p_c + \ee) = O(\ee^{-1})$; both arguments are similar
and the result for $g^{(k)}$ is needed for $g^{(k+1)}$.
Again we begin with the identity 
$g(p) = 1 - \phi(1 - p g(p))$, this time differentiating $k$ times.
The result is a sum of terms of the form 
$C p^a \phi^{(b)} (1 - p g(p)) \prod_{j=0}^k [g^{(j)} (p)]^{c_j}$.
To keep track of the proliferation of these under successive differentiation,
let the triple $\langle a, b, \cc\rangle$ to denote such a term, where $\cc = 
(c_0, \ldots , c_k)$ and we ignore the value of the multiplicative 
constant $C$.  For example, before differentiating at all, the right hand 
side is represented as $\langle 0 , 0 , \delta_0 \rangle$ where 
$\delta_i$ denotes the vector with $i$-component equal to~1 and the remaining
components equal to~0.  Differentiating replaces a term 
$\langle a, b, \cc \rangle$ with a sum of terms of four types, where 
$a, b, i$ and the entries of the vector $\cc := (c_0, \ldots c_k)$ 
are nonnegative integers and the last type can only occur when $c_i > 0$:
$$
\langle a-1, b, \cc \rangle \; , \; 
\langle a, b+1, \cc + \delta_0 \rangle \; , \; 
\langle a+1, b+1 , \cc + \delta_1 \rangle \; , \; \mbox{ and }
\langle a, b, \cc + \delta_{i+1} - \delta_i \rangle \; .
$$
After differentiating $k$ times,
the only term on the right-hand side for which $c_k \neq 0$
will be obtained from the term of the third type in the first 
differentiation, yielding $\langle 1, 1, \delta_1 \rangle$,
followed by the term of the fourth type in each successive differentiation,
yielding $\langle 1, 1, \delta_k \rangle$.  In other words,
the only summand on the right with a factor of $g^{(k)} (p)$ will
be the term $p \phi' (1 - p g(p)) g^{(k)} (p)$.  Subtracting this
from the left-hand side and dividing by $1 - p \phi' (1 - p g(p))$
results in an expression
$$g^{(k)}(p) = \frac{\sum \langle a, b, \cc \rangle}{1 - p \phi' (1-pg(p))}$$
where the summands in the numerator have $a \leq k, b \leq k$ and 
$c_k = 0$.  By the induction hypothesis, we know that the numerator 
is $O(1)$, and by~\eqref{eq:order}, the denominator is $\Theta (\ee)$,
proving that $g^{(k)}(p_c+\ee) = O(\ee^{-1})$. 

Identical reasoning with $k+1$ in place of $k$ shows that 
$$g^{(k+1)}(p) = \frac{\sum \langle a, b, \cc \rangle}{1 - p \phi' (1-pg(p))}$$
where the summands have $a \leq k+1, b \leq k+1, c_{k+1} = 0$ and
$c_k = 0$ or~1.  By~\eqref{eq:order} we know that $g^{(k)} (p_c + \ee) 
= O(\ee^{-1})$.  We conclude this time that the numerator is $O(\ee^{-1})$.  
The denominator is still $\Theta (\ee^{-1})$, allowing us to conclude
that $g^{(k+1)} (p_c + \ee) = O(\ee^{-2})$ and completing the induction.
$\Cox$

\section{Proof of part~$(ii)$ of Theorem~\protect{\ref{th:main}}: 
behavior at criticality}    \label{sec:critical}

This section is concerned with the expansion of $g(\TT , \cdot)$
at criticality.  Section~\ref{ss:expansion} defines the quantities that
yield the expansion.  Section~\ref{ss:martingales} constructs some
martingales and asymptotically identifies the expected number 
of $k$-subsets of $T_n$ that survive critical percolation as
a polynomial of degree $k-1$ whose leading term is a constant 
multiple of $W$ (a consequence of Theorem~\ref{th:Y-jk-mart}, below).  
Section~\ref{ss:diff} finishes computing the $\ell$-term Taylor expansion 
for $g(\TT,\cdot)$ at criticality.

\subsection{Explicit expansion} \label{ss:expansion}

Throughout the paper we use $\{ r_j \}$ to denote the coefficients
of the expansion of $g$ when they exist, given by the explicit 
formula~\eqref{eq:r}.  For $m \geq 1$, the $m$th power of $g$ has
a $k$-order expansion at $p_c^+$ whenever $g$ does.  Generalizing
the notation for $r_j$, we denote the coefficients of the expansion
of $g^m$ at $p_c^+$ by $\{ \rmj \}$ where
\begin{equation} \label{eq:rmj}
g(p_c + \ee)^m = \sum_{j=1}^\ell \rmj \ee^j + o(\ee^\ell) 
\end{equation}
for any $\ell$ for which such an expansion exists.  

We prove part~$(ii)$ of Theorem~\ref{th:main} by identifying the 
expansion.  To do so, we need a notation for certain expectations.
Fix a tree $T$.  For $n \geq 0$, $j\geq1$ and $v \in T$ , define
$$X_n^{(j)}(v) := \sum \limits_{\{v_1,\ldots,v_j\} \in \binom{T_n(v)}{j}}
   \P_T[v \conn_{p_c} v_1 ,v_2,\ldots, v_j]$$
where $v \conn_{p_c}v_1, v_2 ,\ldots ,v_j$ is the event that
$v$ is connected to each of $v_1,\ldots, v_j$ under critical percolation.
We omit the argument $v$ when it is the root; thus $X_n^{(j)} 
:= X_n^{(j)}(\rtt)$.  Note that
$$X_n^{(1)} = W_n,\qquad \text{and}\qquad X_n^{(2)}
   = \sum_{\{u,v\} \in \binom{T_n}{2}} p_c^{2n - |u \wedge v|}\,.$$
The former is the familiar martingale associated to a branching process,
while the latter is related to the energy of the uniform measure on $T_n$.

Extend this definition further: for integers $j$ and $k$, define
$$X_{n}^{(j,k)} := \sum_{\{v_i\} \in \binom{T_n}{j}}
   \binom{|T(v_1,\ldots,v_j)|}{k} p_c^{|T(v_1,\ldots,v_j)|}$$
where $T(v_1,\ldots,v_j)$ is the smallest rooted subtree of $T$ containing
each $v_i$ and $|T(v_1,\ldots,v_j)|$ refers to the number of edges
this subtree contains.  Note that $X_n^{(j,0)} = X_n^{(j)}$.

Part~$(ii)$ of Theorem~\ref{th:main} follows immediately from the 
following expansion, which is the main work of this section.

\begin{thm} \label{th:g-expansion}
Define
\begin{equation} \label{eq:M}
M_n^{(i)} := M_n^{(i)} (T) := \mu^i \sum_{j = 1}^{i} (-1)^{j+1} \sum_{d=j}^i 
   p_c^d r_{j,d} X_n^{(j,i-d)} \, . 
\end{equation}
Suppose that $\E \left [ Z^{(2 \ell + 1)(1 + \beta)} \right ] < \infty$
for some integer $\ell \geq 1$ and real $\beta > 0$.  $(i)$ The quantities
$\{ M_n^{(i)} : n \geq 1 \}$ are a $\{ \T_n \}$-martingale with mean $r_i$. 
$(ii)$ For $\GW$-almost every tree $T$ the limits 
$M^{(i)} := \lim_{n \to \infty} M_n^{(i)}$ exist.  $(iii)$ These 
limits are the coefficients in the expansion
\begin{equation} \label{eq:g-expansion}
g(T , p_c + \ee) = \sum_{i=1}^\ell M^{(i)} \ee^i + o(\ee^\ell) \, .
\end{equation}
\end{thm}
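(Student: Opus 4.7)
The starting point is the exact inclusion-exclusion identity
$$g(T, p) = \sum_{j=1}^{|T_n|} (-1)^{j+1} \sum_{\{v_1,\ldots,v_j\} \in \binom{T_n}{j}} p^{|T(v_1,\ldots,v_j)|} \prod_{i=1}^{j} g(T(v_i), p),$$
valid for every $n\geq 0$ and every $p\in(p_c,1)$, obtained by inclusion-exclusion on the events $A_v := \{\rtt \conn_p v\} \cap \{v \conn_p \infty \text{ in } T(v)\}$ for $v\in T_n$, whose union is $\{\rtt \conn_p \infty\}$ and whose intersections factor across the edge-disjoint subtrees $T(v)$. Setting $p=p_c+\ee$, expanding $(p_c+\ee)^{|T(\{v_i\})|}=\sum_k \binom{|T(\{v_i\})|}{k} p_c^{|T(\{v_i\})|-k} \ee^k$ by the binomial theorem, and substituting the annealed expansion $g(p_c+\ee)^j = \sum_d r_{j,d} \ee^d + o(\ee^\ell)$ in place of the random product $\prod_i g(T(v_i),p_c+\ee)$, the coefficient of $\ee^i$ (after using $\mu p_c = 1$) is precisely $M_n^{(i)}(T)$. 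For part~(i), I would verify the martingale property by conditioning on $\T_n$, under which the subtrees $\{T(v):v\in T_n\}$ are i.i.d.\ $\GW$; the individual $X_n^{(j,k)}$ are \emph{not} martingales, but the alternating signed sum defining $M_n^{(i)}$ is, because the recursion for $r_{j,d}$ from Proposition~\ref{pr:g-rec-rel} is dual to the combinatorics of expressing $X_{n+1}^{(j,k)}$ in terms of the $X_n^{(j',k')}$ after grouping $j$-subsets of $T_{n+1}$ by their parent sets in $T_n$ (the cancellation labelled Lemma~\ref{lem:constants-sum} in the outline). Evaluating at $n=0$, only $j=1,d=i$ contributes and $M_0^{(i)} = (\mu p_c)^i r_{1,i} = r_i$, giving the mean.

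Part~(ii) then follows from $L^{1+\beta'}$-bounded martingale convergence once I establish $\sup_n \E|M_n^{(i)}|^{1+\beta'}<\infty$ for some $\beta'>0$ and all $i\leq\ell$. Each $X_n^{(j,k)}$ is a $j$-fold correlation sum of edge counts weighted by $p_c^{|T|}$, whose $(1+\beta')$-moment reduces, via standard $\GW$ moment calculations in the spirit of Bingham--Doney, to a polynomial in moments of $Z$ of total order at most $(2\ell+1)(1+\beta)$ — exactly our hypothesis. For part~(iii), I would apply Bonferroni to the above identity at level $2m+1$ (upper) and $2m$ (lower) for large fixed $m$, and let $n=n(\ee)\sim C\log(1/\ee)$ with $C$ large. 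Inside each truncated sum I would split
$$\prod_i g(T(v_i),p_c+\ee) = g(p_c+\ee)^j + \Bigl[\prod_i g(T(v_i),p_c+\ee) - g(p_c+\ee)^j\Bigr] \, .$$
The first piece contributes exactly $\sum_i M_n^{(i)}(T) \ee^i$ after using the expansion derived in paragraph one, and $M_n^{(i)}(T)\to M^{(i)}(T)$ almost surely by part~(ii). It then remains to control three error terms as $o(\ee^\ell)$: the Bonferroni tail $\Sigma_{2m+2}$; the fluctuation bracket $\prod_i g(T(v_i)) - g(p_c+\ee)^j$, bounded by a variance estimate using the conditional independence of the $g(T(v),p_c+\ee)$'s; and the residual $|M_{n(\ee)}^{(i)}-M^{(i)}| \cdot \ee^i$ from martingale convergence.

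The hardest step is the simultaneous balancing of these three errors against a single choice of $n(\ee)$. They pull in opposite directions: Bonferroni truncation and martingale convergence both want $n$ large, while the fluctuation bound degrades as $|T_n|$ grows like $\mu^n$ and the number of $j$-subsets blows up combinatorially. The moment condition $\E Z^{(2\ell+1)(1+\beta)}<\infty$ is calibrated exactly so that the scale $n(\ee)\sim \log(1/\ee)/\log \mu$ works for all three errors up to order $\ell$: the factor $2\ell+1$ reflects the doubling inherent to second-moment/variance bounds, plus the extra orders required beyond $\ell$ to close the Bonferroni remainder.
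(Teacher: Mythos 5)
Your skeleton does match the paper's: inclusion--exclusion/Bonferroni over the level-$n(\ee)$ vertices, replacement of the quenched products $\prod_i g(T(v_i),p)$ by the annealed $g(p)^j$, binomial/Taylor expansion identifying the coefficient of $\ee^i$ with $M_n^{(i)}$, and passage to the limit of the martingale; your identification of $M_0^{(i)}=r_i$ is also correct. But the two quantitative steps you propose would fail as stated. For part (ii) you plan to get $\sup_n \E|M_n^{(i)}|^{1+\beta'}<\infty$ by bounding the $(1+\beta')$-moments of the individual $X_n^{(j,k)}$ via standard $\GW$ moment calculations. Those moments are not bounded in $n$: in fact $X_n^{(j,k)}\sim c_{j,k}'\,n^{j+k-1}W$ almost surely and in $L^{1+\beta}$ (Theorem~\ref{th:Y-jk-mart}$(c)$), so $\Vert X_n^{(j,k)}\Vert_{1+\beta'}$ diverges polynomially and no termwise bound on the signed sum can yield uniform $L^{1+\beta'}$ bounds. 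The boundedness of $M_n^{(i)}$ comes only from cancellation: one Doob-decomposes each $X_n^{(j,k)}$, proves by a double induction that the martingale increments $\Delta Y_n^{(j,k)}$ are exponentially small in $L^{1+\beta}$ (the technical core, Theorem~\ref{th:Y-jk-mart}$(a)$), and then checks that in the signed combination defining $M_n^{(i)}$ the predictable parts cancel \emph{identically} --- the identity you call ``duality'' and merely assert, which the paper proves as Lemma~\ref{lem:constants-sum} by equating coefficients in $[1-\phi(1-(p_c+\ee)g(p_c+\ee))]^a=g(p_c+\ee)^a$. As written, your parts (i)--(ii) presuppose exactly these estimates and this cancellation rather than supplying them.

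The same missing estimates block part (iii). Your error budget lists the residual $|M_{n(\ee)}^{(i)}-M^{(i)}|\,\ee^i$ but invokes only almost-sure convergence of the martingale; that gives $o(\ee^i)$, which is not $o(\ee^\ell)$ for $i<\ell$. You need $|M_{n(\ee)}^{(i)}-M^{(i)}|=o(\ee^{\ell-i})$ with $n(\ee)$ growing only logarithmically (your choice) or like a small power of $1/\ee$ (the paper's $n=\lceil\ee^{-\delta}\rceil$), i.e.\ a superpolynomial-in-$n$ rate of convergence; in the paper this comes from the exponentially small increments of $M_n^{(i)}$ together with Markov and Borel--Cantelli, giving $|M_n^{(i)}-M^{(i)}|\,n^N\to0$ a.s.\ for every $N$. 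Two lesser points: the specific scale $n(\ee)\sim\log(1/\ee)/\log\mu$ ties your constant to $\mu$, whereas the exponential rate in the variance bound of Proposition~\ref{pr:bonf-replace} is not under control, so you need either an arbitrarily large constant or the polynomial choice; and since $\ee$ is a continuous parameter, the Chebyshev/Borel--Cantelli step must be made uniform over $\ee$ ranging in small intervals, which is the purpose of the grid construction $[a(m,r),b(m,r))$ in that proposition and is absent from your sketch.
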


\begin{unremark}
The quantities $X_n^{(j,i)}$ do not themselves have limits as 
$n \to \infty$.  In fact for fixed $i$ and $j$ the sum over $d$
of $X_n^{(j,i-d)}$ is of order $n^{i-1}$.  Therefore it is important
to take the alternating outer sum before taking the limit.  
\end{unremark}

\subsection{Critical Survival of $k$-Sets} \label{ss:martingales}

To prove Theorem~\ref{th:g-expansion} we need to work with centered
variables.  Centering at the unconditional expectation is not good
enough because these mean zero differences are close to the nondegenerate
random variable $n^{i-1} W$ and therefore not summable.  Instead we 
subtract off a quantity that can be handled combinatorially, leaving
a convergent martingale.

Throughout the rest of the paper, the notation $\Delta$ in front of a 
random variable with a subscript (and possibly superscripts as well)
denotes the backward difference in the subscripted variable.  Thus,
for example, 
$$\Delta X_n^{(j,i)} := X_n^{(j,i)} - X_{n-1}^{(j,i)} \, .$$

Let $X_n^{(j,i)} = Y_n^{(j,i)} + A_n^{(j,i)}$ denote the Doob decomposition 
of the process $\{ X_n^{(j,i)} : n = 1, 2, 3, \ldots \}$ on the filtration
$\{ \T_n \}$.  To recall what this means, ignoring superscripts for 
a moment, the $Y$ and $A$ processes are uniquely determined by requiring 
the $Y$ process to be a martingale and the $A$ process to be predictable, 
meaning that $A_n \in \T_{n-1}$ and $A_0 = 0$.  The decomposition can be 
constructed inductively in $n$ by letting $A_0 = 0, Y_0 = \E X_0$ 
and defining 
\begin{eqnarray*}
\Delta A_n & := & \E \left ( \Delta X_n \| \T_{n-1} \right ) \, ; \\
\Delta Y_n & := & \Delta X_n - \Delta A_n \, .
\end{eqnarray*}
We begin by identifying the predictable part.

\begin{lem} \label{lem:predictable}
Let $\CC_i (j)$ denote the set of compositions of $j$ of length $i$
into strictly positive parts.  Let $m_r := \E 
\binom{Z}{r}$ and define constants $c_{j,i}$ by
$$c_{j,i} := p_c^j \sum_{\alpha \in \mathcal{C}_i(j)} 
   m_{\alpha_1} m_{\alpha_2}\cdots m_{\alpha_i} \, .$$ 
Then for each $k \geq 0$,
\begin{eqnarray} 
\Delta A_{n+1}^{(j,k)} & = & 
   -X_n^{(j,k)} 
   + \sum_{i=1}^j c_{j,i} \sum_{d=0}^k \binom{j}{k-d} X_n^{(i,d)} \nonumber \\
& = & \sum_{d=0}^{k-1} \binom{j}{k-d} X_n^{(j,d)} 
   + \sum_{i=1}^{j-1} \sum_{d=0}^k c_{j,i} \binom{j}{k-d} X_n^{(i,d)} 
   \label{eq:A} \, .
\end{eqnarray}
\end{lem}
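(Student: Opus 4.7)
The plan is to compute $\mathbb{E}(X_{n+1}^{(j,k)} \mid \T_n)$ directly by parametrizing $j$-subsets of $T_{n+1}$ by their set of parents in $T_n$, and then subtract $X_n^{(j,k)}$ to read off $\Delta A_{n+1}^{(j,k)}$.

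First I would open up a generic summand of $X_{n+1}^{(j,k)}$. For $S = \{w_1,\dots,w_j\} \in \binom{T_{n+1}}{j}$ let $U = \pi(S) \subseteq T_n$ be the set of distinct parents and let $i = |U|$, so $1 \leq i \leq j$. The key structural observation is that the minimal rooted subtree spanning $S$ is obtained from $T(U)$ by appending exactly one edge per element of $S$, whence
\[
|T(S)| = |T(U)| + j \qquad \text{and} \qquad p_c^{|T(S)|} = p_c^{j}\,p_c^{|T(U)|}.
\]
Applying Vandermonde to $\binom{|T(U)| + j}{k} = \sum_{d=0}^{k} \binom{|T(U)|}{d}\binom{j}{k-d}$ will rewrite each summand in terms of quantities measurable with respect to $\T_n$, aside from a multiplicative factor counting children of $U$.

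Next I would reorganize the sum over $S$ as a sum over $i \in \{1,\dots,j\}$, then over $U \in \binom{T_n}{i}$, then over ordered compositions $\alpha \in \CC_i(j)$ specifying how many of the $j$ chosen children come from each enumerated element of $U$, and finally over the choice of children. Conditioned on $\T_n$, the degrees $\{\deg_u : u \in T_n\}$ are i.i.d. copies of $Z$ independent of the tree up to level $n$, so the conditional expected number of admissible child selections for fixed $U$ and $\alpha$ is $\prod_{a=1}^{i} m_{\alpha_a}$. Summing over $\alpha \in \CC_i(j)$ and pulling the common factor $p_c^j$ out exactly produces the constant $c_{j,i}$ from the statement. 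The result is
\[
\mathbb{E}\bigl(X_{n+1}^{(j,k)} \bigm\| \T_n\bigr) \;=\; \sum_{i=1}^{j} c_{j,i} \sum_{d=0}^{k} \binom{j}{k-d}\, X_n^{(i,d)},
\]
and subtracting $X_n^{(j,k)}$ yields the first displayed form of $\Delta A_{n+1}^{(j,k)}$ in \eqref{eq:A}.

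For the second form I would isolate the $i=j$ term. In $\CC_j(j)$ the only composition is $(1,\dots,1)$, so $c_{j,j} = p_c^{j}\,m_1^{j} = (p_c\mu)^{j} = 1$, using $p_c = 1/\mu$. The $d=k$ piece of this $i=j$ contribution is $\binom{j}{0}X_n^{(j,k)} = X_n^{(j,k)}$, which cancels the leading $-X_n^{(j,k)}$; the remaining $d<k$ pieces form $\sum_{d=0}^{k-1}\binom{j}{k-d}X_n^{(j,d)}$, while the $i<j$ terms are collected unchanged. This gives \eqref{eq:A}.

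The calculation is essentially bookkeeping, so the only real obstacle is making the parametrization of $S \in \binom{T_{n+1}}{j}$ by $(U,\alpha,\text{selection of children})$ unambiguous — in particular, verifying that the ordered compositions $\alpha \in \CC_i(j)$ together with the independence of the $\deg_u$ correctly reproduce $c_{j,i}$ without double-counting over enumerations of $U$. Once that indexing is nailed down, Vandermonde's identity and the identity $p_c\mu = 1$ do all the work.
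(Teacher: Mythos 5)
Your proposal is correct and follows essentially the same route as the paper: parametrize $j$-subsets of $T_{n+1}$ by their parent sets in $T_n$ together with a composition in $\CC_i(j)$, use $|T(S)| = |T(U)| + j$ and Vandermonde to reduce to $\T_n$-measurable quantities, and take conditional expectations of the independent child counts to produce $c_{j,i}$. Your explicit cancellation via $c_{j,j} = (p_c\mu)^j = 1$ is exactly the "rearrangement of terms" the paper invokes for the second displayed form.
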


\begin{proof}
For distinct vertices $v_1,\ldots,v_j$ in $\TT_{n+1}$, 
their set of parents $u_1,\ldots,u_\ell$ form a subset of $\TT_n$ 
with at most $j$ elements.  In order to sum over all $j$-sets of 
$\TT_{n+1}$, one first sums over all sets of parents.  For a fixed 
parent set $u_1,\ldots,u_\ell$ in $\TT_{n-1}$, the total number of 
$j$-sets with parent set $\{u_1,\ldots,u_\ell\}$ is 
$$\sum_{\alpha \in \mathcal{C}_\ell(j)} 
   \binom{Z_1(u_1)}{\alpha_1}\cdots 
   \binom{Z_1(u_\ell)}{\alpha_\ell} \, .$$  
Furthermore, we have 
$$\binom{|\TT(v_1,\ldots,v_j)|}{k} 
   = \binom{|\TT(u_1,\ldots,u_\ell)| + j}{k} 
   = \sum_{d = 0}^k \binom{j}{k-d}\binom{|\TT(u_1,\ldots,u_\ell)|}{d} 
\, .$$
This gives the expansion 
\begin{align*}
X_{n+1}^{(j,k)} 
&= \sum_{\{v_i\} \in \binom{\TT_{n+1}}{j}} 
   \binom{|\TT(v_1,\ldots,v_j)|}{k} p_c^{|\TT(v_1,\ldots,v_j)|} \\
&= \sum_{\ell = 1}^j \sum_{\{u_i\} \in \binom{\TT_n}{\ell}}
   \sum_{d = 0}^k \binom{j}{k-d} \binom{\TT(u_1,\ldots,u_\ell)}{d} 
   p_c^{|\TT(u_1,\ldots,u_\ell)|} 
   \sum_{\alpha \in \mathcal{C}_\ell(j)}  
   p_c^j \binom{Z_1(u_1)}{\alpha_1}\cdots \binom{Z_1(u_\ell)}{\alpha_\ell}
\, . 
\end{align*}
Taking conditional expectations with respect to $\T_n$ completes the proof
of the first identity, with the second following from rearrangement of terms.
$\Cox$
\end{proof}

The following corollary is immediate from Lemma~\ref{lem:predictable}
and the fact that $X_0^{(j,k)} = Y_0^{(j,k)}$.

\begin{cor} \label{cor:moment-martingale}
For each $j$ so that $\E[Z^j] < \infty$ and each $k$, the terms of the
$Y$ martingale are given by
\begin{eqnarray*}
Y_n^{(j,k)} & = & Y_0^{(j,k)} + \sum_{m=1}^n \Delta Y_m^{(j,k)} \\
& = & X_n^{(j,k)} - \sum_{m = 0}^{n-1}
   \left[\sum_{d = 0}^{k-1} \binom{j}{k-d}X_m^{(j,d)} 
   + \sum_{i = 1}^{j-1} c_{j,i} \sum_{d = 0}^k 
   \binom{j}{k-d} X_m^{(i,d)} \right] \, .
\end{eqnarray*}
$\Cox$
\end{cor}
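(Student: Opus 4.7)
The plan is to unwind the Doob decomposition $X_n^{(j,k)} = Y_n^{(j,k)} + A_n^{(j,k)}$ against the explicit formula for $\Delta A_{n+1}^{(j,k)}$ that Lemma~\ref{lem:predictable} already provides. The first displayed line in the corollary is merely the telescoping identity $Y_n^{(j,k)} = Y_0^{(j,k)} + \sum_{m=1}^{n} \Delta Y_m^{(j,k)}$, which holds by definition of the backward difference $\Delta$; all the content therefore lies in the second line.

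For that second line I would write $Y_n^{(j,k)} = X_n^{(j,k)} - A_n^{(j,k)}$ and expand the compensator telescopically as $A_n^{(j,k)} = \sum_{m=0}^{n-1} \Delta A_{m+1}^{(j,k)}$, using $A_0^{(j,k)} = 0$. Substituting the second (rearranged) form of $\Delta A_{m+1}^{(j,k)}$ from Lemma~\ref{lem:predictable} reproduces the bracketed double sum in the corollary statement verbatim, so no further algebra is required. As a side remark, the identity $A_0^{(j,k)} = 0$ also forces $Y_0^{(j,k)} = X_0^{(j,k)}$, which is consistent with $n=0$ making the outer sum empty.

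The only preliminary worth flagging is that the Doob decomposition must actually be well defined, i.e.\ each $X_n^{(j,k)}$ must be integrable. Since $|\TT(v_1,\ldots,v_j)| \leq jn$ for $v_1,\ldots,v_j \in \TT_n$, one has the crude bound $X_n^{(j,k)} \leq C_{j,k,n} |\TT_n|^j$, and an induction on $n$ via $|\TT_{n+1}| = \sum_{v \in \TT_n} Z(v)$ gives $\E |\TT_n|^j < \infty$ at each fixed $n$ under the moment hypothesis $\E Z^j < \infty$. I do not foresee any substantive obstacle, and this is essentially why the authors themselves flag the corollary as immediate.
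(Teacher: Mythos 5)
Your proposal is correct and matches the paper's argument: the corollary is obtained exactly by telescoping the Doob decomposition and substituting the rearranged form of $\Delta A_{m+1}^{(j,k)}$ from Lemma~\ref{lem:predictable}, together with $X_0^{(j,k)} = Y_0^{(j,k)}$. The integrability remark is a reasonable extra check but adds nothing beyond what the paper treats as immediate.
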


We want to show that these martingales converge both almost surely and in some appropriate $L^p$ space; this will require us to take $L^{1+\beta}$ norms for some $\beta \in (0,1]$.  The following randomized version of the 
Marcinkiewicz-Zygmund inequality will be useful.

\begin{lem} \label{lem:moment-sum}
Let $\{\xi_k \}_{k=1}^\infty$ be i.i.d. with $\E[\xi_1] = 0$ 
and $\E[|\xi_1|^{1+\beta}] < \infty$ for some $\beta \in (0,1]$, 
and let $N$ be a random variable in $\mathbb{N}$ independent from 
all $\{\xi_k\}$ and with $\E[N] < \infty$.  If we set 
$S_n = \sum_{k=1}^n \xi_k$, then there exists a constant $c > 0$ 
depending only on $\beta$ so that 
$$\E[|S_N|^{1+\beta}] \leq c \E[|\xi_1|^{1+\beta}] \E[N] \,.$$

In particular, if $\xi(v)$ are associated to vertices $v \in \TT_s$, 
and are mutually independent from $\TT_s$, then 
$$\left\Vert p_c^s \sum_{v \in \TT_s} \xi(v) \right\Vert_{L^{1 + \beta}} 
   \leq c' p_c^{s\beta/(1 + \beta)} \Vert\xi(v) \Vert_{L^{1 + \beta}} \, .$$
\end{lem}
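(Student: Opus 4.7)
The plan is to reduce the first inequality to a classical moment bound and then specialize to the Galton--Watson setting via conditioning. The key ingredient is the von Bahr--Esseen inequality: for i.i.d.\ mean-zero random variables $\xi_1,\ldots,\xi_n$ with $\E|\xi_1|^{1+\beta} < \infty$ and $\beta \in (0,1]$,
\[
\E|S_n|^{1+\beta} \leq 2n \, \E|\xi_1|^{1+\beta}.
\]
I would simply cite this. The linear-in-$n$ growth holds precisely for exponents $p \in [1,2]$, which is exactly why the hypothesis restricts to $\beta \leq 1$ (for $p > 2$ one gets the weaker Marcinkiewicz--Zygmund scaling $n^{p/2}$, insufficient for the desired conclusion).

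For the first assertion, I would condition on $N$ and use that $N$ is independent of $\{\xi_k\}$:
\[
\E|S_N|^{1+\beta} = \sum_{n=1}^\infty \P(N=n)\, \E|S_n|^{1+\beta} \leq 2\, \E|\xi_1|^{1+\beta} \sum_{n=1}^\infty n\, \P(N=n) = 2\, \E|\xi_1|^{1+\beta}\, \E[N],
\]
giving the bound with $c = 2$.

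For the second assertion, I would apply the first part with $N := |\TT_s| = Z_s$. Since $\TT$ is supercritical with $\P[Z=0]=0$, we have $\E[N] = \mu^s = p_c^{-s}$. The hypothesis that the (i.i.d.\ mean-zero) family $\{\xi(v)\}$ is independent of $\TT_s$ lets us fix any $\TT_s$-measurable enumeration of the level-$s$ vertices and realize $\sum_{v \in \TT_s} \xi(v)$ as $S_N$ with $N$ independent of $\{\xi_k\}$. The first part then gives
\[
\E\left[\left|\sum_{v \in \TT_s} \xi(v)\right|^{1+\beta}\right] \leq c\, p_c^{-s}\, \E|\xi|^{1+\beta}.
\]
Multiplying inside by $p_c^{s(1+\beta)}$ and taking $(1+\beta)$-th roots yields the stated inequality with $c' = c^{1/(1+\beta)}$, since $s(1+\beta) - s = s\beta$ and division by $1+\beta$ produces the exponent $s\beta/(1+\beta)$.

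The main (and only mild) obstacle is invoking the correct classical inequality: one needs the sharp linear-in-$n$ moment bound that holds for $p \in [1,2]$, which is the structural reason the hypothesis restricts to $\beta \leq 1$ rather than allowing an arbitrary positive moment above~$1$. Everything else is routine conditioning and exponent arithmetic.
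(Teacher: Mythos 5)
Your proposal is correct and follows essentially the same route as the paper: a linear-in-$n$ moment bound for fixed $n$, followed by conditioning on the independent index $N$, and the same exponent arithmetic in the Galton--Watson specialization using $\E|\TT_s| = \mu^s = p_c^{-s}$. The only difference is which classical ingredient supplies the fixed-$n$ bound: you cite von Bahr--Esseen directly, while the paper derives the same linear-in-$n$ estimate from the Marcinkiewicz--Zygmund inequality combined with the monotonicity of $\ell^p$ norms (valid precisely because $1+\beta \leq 2$).
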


\begin{proof}
Suppose first that $N$ is identically equal to a constant $n$.  The 
Marcinkiewicz-Zygmund inequality (e.g. \cite[Theorem 10.3.2]{chow-teicher}) 
implies that there exists a constant $c > 0$ depending only on $\beta$ 
such that 
$$\E[|S_n|^{1+\beta}] 
   \leq c \E\left[\left(\sum_{k=1}^n |\xi_k|^2 \right)^{(1+\beta)/2}\right]
\, . $$
	
Because $1 + \beta \leq 2$ and the $\ell^p$ norms descend, we have 
$\Vert (\xi_k)_{k=1}^n \Vert_{\ell^2} \leq \Vert (\xi_k)_{k=1}^n 
\Vert_{\ell^{1+\beta}}$ deterministically; this completes the proof 
when $N$ is constant.  Writing $\E[|S_N|^{1+\beta}] = 
\E\left[\E[|S_N|^{1+\beta} \| N] \right]$ and applying 
the bound from the constant case completes the proof.	
$\Cox$
\end{proof}

We now show that the martingales $\{ Y_n^{(j,k)} : n \geq 0 \}$ converge.

\begin{thm} \label{th:Y-jk-mart}
Suppose $\E[Z^{j(1 + \beta)}] < \infty$ for some $\beta > 0$.  Then 
\begin{enumerate}[$(a)$]
\item $\Vert \Delta Y_{n+1}^{(j,k)} \Vert_{L^{1 + \beta}} 
\leq C e^{-c n}$ where $C$ and $c$ are positive constants 
depending on $j,k,\beta$ and the offspring distribution.
\item $Y_n^{(j,k)}$ converges almost surely and in $L^{1+\beta}$
to a limit, which we denote $Y^{(j,k)}$.
\item There exists a positive constant $c_{j,k}'$ depending only on $j,k$ 
and the offspring distribution so that 

$X_n^{(j,k)} n^{-(j+k-1)} \to c_{j,k}' W$ 
almost surely and in $L^{1 + \beta}$.
\end{enumerate}
\end{thm}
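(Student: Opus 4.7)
The plan is to prove all three parts jointly by induction on $j+k$. The base case $(j,k)=(1,0)$ has $X_n^{(1,0)}=W_n$, which is already a martingale with $Y_n^{(1,0)}=W_n$ and $\Delta Y_{n+1}^{(1,0)}=p_c^n\sum_{u\in\TT_n}(p_c Z_1(u)-1)$. The summands are i.i.d., mean-zero, and lie in $L^{1+\beta}$ under $\E Z^{1+\beta}<\infty$, so Lemma \ref{lem:moment-sum} applied conditionally on $\T_n$ gives $\|\Delta Y_{n+1}^{(1,0)}\|_{L^{1+\beta}}\leq c' p_c^{n\beta/(1+\beta)}\|p_c Z-1\|_{L^{1+\beta}}$, proving (a) with exponential rate $\rho:=p_c^{\beta/(1+\beta)}<1$. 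Parts (b) and (c) then reduce to the classical $W_n\to W$ convergence, with $c'_{1,0}=1$.

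For the inductive step, fix $(j,k)$ with $j+k\geq 2$ and assume all three conclusions hold for every $(j',k')$ with $j'+k'<j+k$. Unwinding the expansion derived in the proof of Lemma \ref{lem:predictable},
\[
\Delta Y_{n+1}^{(j,k)}=\sum_{\ell=1}^j \sum_{\{u_1,\dots,u_\ell\}\in\binom{\TT_n}{\ell}}\omega_{n,\ell}(u_1,\dots,u_\ell)\bigl[R_\ell(u_1,\dots,u_\ell)-c_{j,\ell}\bigr],
\]
where $\omega_{n,\ell}$ is the $\T_n$-measurable prefactor $\sum_{d=0}^{k}\binom{j}{k-d}\binom{|\TT(u_1,\dots,u_\ell)|}{d}p_c^{|\TT(u_1,\dots,u_\ell)|}$ and $R_\ell:=p_c^j\sum_{\alpha\in\CC_\ell(j)}\prod_i\binom{Z_1(u_i)}{\alpha_i}$ has conditional expectation $c_{j,\ell}$ under $\T_n$. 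I would next expand $R_\ell-c_{j,\ell}$ into a sum over non-empty $S\subseteq\{1,\dots,\ell\}$ of products of the independent centered factors $\binom{Z_1(u_i)}{\alpha_i}-m_{\alpha_i}$ for $i\in S$, with the remaining coordinates contributing deterministic $m_{\alpha_i}$ factors. For each $S$ one conditions on all offspring counts outside $S$ and iteratively applies Lemma \ref{lem:moment-sum} once per coordinate in $S$, each application extracting a factor $p_c^{n\beta/(1+\beta)}$ from the conditional independence of the $Z_1(u_i)$. The moment hypothesis $\E Z^{j(1+\beta)}<\infty$ exactly ensures that each centered factor lies in $L^{1+\beta}$, even in the worst case $\alpha_i=j$.

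To control the remaining $\T_n$-measurable prefactors, I would invoke the inductive polynomial bound $\|X_n^{(i,d)}\|_{L^{1+\beta}}\leq C n^{i+d-1}$ for $i+d<j+k$, which follows from part (c) of the inductive hypothesis together with Corollary \ref{cor:moment-martingale}. Summing over $\ell$ and $S$ then yields $\|\Delta Y_{n+1}^{(j,k)}\|_{L^{1+\beta}}\leq Ce^{-cn}$, establishing (a). Part (b) follows immediately: exponential summability of the $L^{1+\beta}$-norms gives $L^{1+\beta}$-convergence of $Y_n^{(j,k)}$, while Doob's theorem delivers the almost-sure limit since $Y_n^{(j,k)}$ is $L^1$-bounded. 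For part (c), write
\[
\frac{X_n^{(j,k)}}{n^{j+k-1}}=\frac{Y_n^{(j,k)}}{n^{j+k-1}}+\frac{1}{n^{j+k-1}}\sum_{m=0}^{n-1}\Bigl[\sum_{d=0}^{k-1}\binom{j}{k-d}X_m^{(j,d)}+\sum_{i=1}^{j-1}c_{j,i}\sum_{d=0}^{k}\binom{j}{k-d}X_m^{(i,d)}\Bigr],
\]
so the first term vanishes by (b), and the inductive convergence $X_m^{(i,d)}/m^{i+d-1}\to c'_{i,d}W$ combined with $\sum_{m=0}^{n-1}m^{i+d-1}\sim n^{i+d}/(i+d)$ isolates the $(i,d)$ combinations with $i+d=j+k-1$ as contributing to the leading order; the recursive relations then identify $c'_{j,k}$ explicitly in terms of $c_{j,i}$ and the lower-order $c'_{i,d}$.

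The hard part will be the bookkeeping in the second paragraph: $\ell$-subsets of $\TT_n$ share vertices and hence share offspring-count randomness, so Lemma \ref{lem:moment-sum} does not apply directly to the outer sum over $\ell$-tuples. The algebraic decomposition by $S\subseteq\{1,\dots,\ell\}$ and the iterative conditioning are essential to recover genuine conditional independence one coordinate at a time, and the moment hypothesis must be tight enough to survive the worst-case composition $\alpha=(j)\in\CC_1(j)$. Once the exponential bound in (a) is in hand, parts (b) and (c) are essentially routine consequences of martingale convergence and the Cesaro-type manipulation of the Doob decomposition.
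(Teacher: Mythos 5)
Your level-$n$ Doob-increment decomposition is correct, and your argument for part $(c)$ is essentially the paper's own (Ces\`aro summation of the predictable part from Corollary~\ref{cor:moment-martingale}, with the dominant $(j,k-1)$ and $(j-1,k)$ terms). The gap is in part $(a)$, which is the heart of the theorem: the sentence ``summing over $\ell$ and $S$ then yields $\Vert \Delta Y_{n+1}^{(j,k)}\Vert_{L^{1+\beta}}\leq Ce^{-cn}$'' is precisely the estimate that has to be proved, and the tool you cite does not deliver it. Lemma~\ref{lem:moment-sum} applies to an \emph{unweighted} sum of i.i.d.\ mean-zero variables indexed by the vertices of a single generation; after your $S$-decomposition what must be bounded is a multilinear form $\sum_{\{u_1,\dots,u_\ell\}}\omega_{n,\ell}(u_1,\dots,u_\ell)\prod_{i\in S}\bigl(\binom{Z_1(u_i)}{\alpha_i}-m_{\alpha_i}\bigr)$ whose $\T_n$-measurable weights vary over tuples through $\binom{|\TT(u_1,\dots,u_\ell)|}{d}p_c^{|\TT(u_1,\dots,u_\ell)|}$ and in which each vertex of $\TT_n$ appears in very many tuples. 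Making ``one coordinate at a time'' conditioning work requires at least a weighted von Bahr--Esseen/Marcinkiewicz--Zygmund inequality (or a martingale-difference decoupling for $|S|\geq 2$) together with bounds on $\E\sum_{u}c(u)^{1+\beta}$, where $c(u)$ is the total weight multiplying the centered factor at $u$; none of this is supplied, and it is exactly the bookkeeping you defer.

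Moreover, the induction as you organize it cannot supply those bounds: you invoke $\Vert X_n^{(i,d)}\Vert_{L^{1+\beta}}\leq Cn^{i+d-1}$ only for $i+d<j+k$, but when $\ell=j$ the relevant weight sums are $\sum_{d\leq k}\binom{j}{k-d}X_n^{(j,d)}$, which include $X_n^{(j,k)}$ itself, i.e.\ a quantity at the current level of the induction, and no crude a priori substitute is available (for instance $X_n^{(j,k)}\leq (jn)^k p_c^n\binom{Z_n}{j}$ has $L^{1+\beta}$ norm growing exponentially in $n$). The paper avoids both problems with a different decomposition: it splits $X_n^{(j,k)}$ according to the level $s$ of the most recent common ancestor (the $\Theta$-decomposition~\eqref{eq:X_n-decomp}), shows via the composition identity~\eqref{eq:compositions} that the pure-$X$ parts cancel exactly against the subtracted predictable terms, so every surviving summand carries a factor $\Delta Y_{n-s-1}^{(\beta_\ell,\alpha_\ell)}(u_\ell)$ with $\beta_\ell<j$ (exponentially small in $n-s$ by induction on $j$ alone), and then $p_c^s\sum_{v\in\TT_s}U_n^{(j,k)}(v)$ is an unweighted sum of i.i.d.\ mean-zero terms to which Lemma~\ref{lem:moment-sum} applies directly, contributing decay in $s$; the product of the two exponential factors gives $(a)$, and the polynomial prefactors are harmless. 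Unless you either prove a weighted/multilinear extension of Lemma~\ref{lem:moment-sum} together with same-level moment control of the coefficient sums, or restructure the argument along these common-ancestor/cancellation lines, part $(a)$, and with it the theorem, is not established.
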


\begin{proof} ~~\\
\ul{Step 1: $(a) \implies (b)$}.  For any fixed $j$ and $k$: 
the triangle inequality and $(a)$ show that $\sup_n \Vert Y_n^{(j,k)} 
\Vert_{L^{1+\beta}} < \infty$, from which $(b)$ follows from the 
$L^p$ martingale convergence theorem.  Next, we prove an identity 
representing $X_n^{(j,k)}$ as a multiple sum over values of $X^{(j' , k')}$
with $(j' , k') < (j,k)$ lexicographically.  

\ul{Step 2: Some computation}.
For a set of vertices $\{v_1,\ldots,v_j\}$, let $v = v_1 \wedge v_2 
\wedge \cdots \wedge v_j$ denote their most recent common ancestor.  
In order for $\rtt \conn_{p_c} v_1,\ldots, v_j$ to hold, we must first 
have $\rtt \conn_{p_c} v$.  For the case of $j \geq 2$, looking at 
the smallest tree containing $v$ and $\{v_i\}$, we must have that 
this tree branches into some number of children $a \in [2,j]$ 
immediately after $v$.  We may thus sum over all possible $v$, 
first by height, setting $s = |v|$, then choosing how many children 
of $v$ will be the ancestors of the $v_1,\ldots,v_j$.  We then choose 
those children $\{u_r\}$, and choose how to distribute the 
$\{v_\ell\}$ among them.  
In order for critical percolation to reach each $v_1,\ldots, v_j$, 
it must first reach $v$, then survive to each child of $v$ that 
is an ancestor of some $\{v_\ell \}$ and then survive to the 
$\{v_\ell \}$ from there.  Finally, in order to choose the 
$k$-element subset corresponding to $\binom{|\TT_(v_1,\ldots,v_j)|}{k}$, 
we may choose $\alpha_0$ elements from the tree $T(u_1,\ldots,u_a)$ 
and $\alpha_\ell$ elements from each subtree of $u_\ell$.  Putting 
this all together, we have the decomposition
\begin{align}
X_n^{(j,k)} &= 
   \sum_{s = 0}^{n-1}p_c^s
   \sum_{v \in \TT_s}
   \sum_{a = 2}^j 
   \sum_{u \in \binom{\TT_1(v)}{a}} p_c^a 
   \sum_{\beta \in \mathcal{C}_a(j)}
   \sum_{\alpha_0 = 0}^{k} 
   \sum_{\alpha \in \widetilde{\mathcal{C}}_a(k-\alpha_0)} 
      \binom{s+a}{\alpha_0} X_{n-s-1}^{(\beta_1,\alpha_1)}(u_1) 
      \cdots X_{n-s-1}^{(\beta_a,\alpha_a)}(u_a) \label{eq:X_n-decomp} \\
&= \sum_{s = 0}^{n-1}p_c^s \sum_{v \in \TT_s} \Theta_{n - s - 1}^{(j,k)}(v) 
   \nonumber
\end{align}
where $\Theta_{n-s-1}^{(j,k)}(v)$ is defined as the inner quintuple sum 
in the previous line and $\widetilde{\mathcal{C}}_a(k)$ denotes the 
set of weak $a$-compositions of $k$; observe that the notation 
$\Theta_{n-s-1}^{(j,k)}(v)$ hides the dependence on $s = |v|$.  

The difference $\Delta Y_n^{(j,k)}$ can now be computed as follows:
\begin{eqnarray} \label{eq:Y-n-big-diff}
\Delta Y_n^{(j,k)} & = & X_n^{(j,k)} - \sum_{i = 1}^j 
   \sum_{d = 0}^k  \binom{j}{k-d}c_{j,i} X_{n-1}^{(i,d)} \\
& = & \sum_{s = 0}^{n-1} p_c^s \sum_{v \in \TT_s} \Theta_{n-s-1}^{(j,k)}(v) 
   - \sum_{s = 0}^{n-2} p_c^s \sum_{v \in \TT_s}\sum_{i=1}^j 
   \sum_{d=0}^k \binom{j}{k-d} c_{j,i} \Theta_{n-s-2}^{(i,d)}(v) \nonumber \\
& = &\sum_{s=0}^{n-2} p_c^s \sum_{v \in \TT_s}
   \left(\Theta_{n-s-1}^{(j,k)}(v) - \sum_{i=2}^j 
   \sum_{d=0}^k \binom{j}{k-d} c_{j,i} \Theta_{n-s-2}^{(i,d)}(v)
   \right)\nonumber \\
&&\qquad + \left[p_c^{n-1}\sum_{v \in \TT_{n-1}} 
   \Theta_{0}^{(j,k)}(v) -  c_{j,1} \sum_{d = 0}^k 
   \binom{j}{k-d}X_{n-1}^{(1,d)} \right]\, \nonumber \\
& = &  \sum_{s=0}^{n-2} p_c^s \sum_{v \in \TT_s} U_n^{(j,k)} (v) + V_n^{(j,k)}
   \nonumber
\end{eqnarray}
where  
\begin{eqnarray}
U_n^{(j,k)} & = & \left(\Theta_{n-s-1}^{(j,k)}(v) - \sum_{i=2}^j \sum_{d=0}^k 
\binom{j}{k-d} c_{j,i} \Theta_{n-s-2}^{(i,d)}(v)\right)
\label{eq:U} \, ;\\
V_n^{(j,k)} & = & \left(p_c^{n-1} \sum_{v \in \TT_{n-1}} p_c^j 
   \binom{n+j-1}{k} \binom{Z_1(v)}{j} \right) 
   -  c_{j,1} \sum_{d = 0}^k \binom{j}{k-d}X_{n-1}^{(1,d)} 
   \, . \label{eq:V} 
\end{eqnarray}

\ul{Step 3: Proving $(a)$ and $(c)$ for $j=1$ and $k$ arbitrary}.  
Specializing~\eqref{eq:Y-n-big-diff} to $j=1$ yields  
\begin{eqnarray*}
\Delta Y_n^{(1,k)}
   & = & \binom{n}{k}W_n - \binom{n-1}{k}W_{n-1} - \binom{n-1}{k-1}W_{n-1} \\
   & = & \binom{n-1}{k}(W_n - W_{n-1}) + \binom{n-1}{k-1}(W_n - W_{n-1}) \, .
\end{eqnarray*}
The quantity $W_n - W_{n-1}$ is the sum of independent contributions
below each vertex in $T_{n-1}$; Lemma~\ref{lem:moment-sum} shows this
to be exponentially small in $L^{1+\beta}$ and proving $(a)$, hence $(b)$. 
Additionally, $Y_n^{(1,k)}n^{-k} \to W/k!$, thereby also showing $(c)$ for 
$j = 1$ and all $k$. 

\ul{Step 4: $V$ is always small}.  Using the identity
$\binom{n+j -1}{k} = \sum_{d = 0}^{k} \binom{n-1}{d} \binom{j}{k-d}$ 
and recalling that $X_{n-1}^{(1,d)} = \binom{n-1}{d}W_{n-1}$ shows that
$$V_n^{(j,k)} = \sum_{d = 0}^k\binom{j}{k-d}\binom{n-1}{d} 
   p_c^{n-1}\sum_{v \in \TT_{n-1}}  p_c^j 
   \left[\binom{Z_1(v)}{j} - \E\binom{Z}{j}\right]\,. $$
Applying Lemma \ref{lem:moment-sum} shows that the innermost sum, 
when multiplied by $p_c^{n-1}$, has $L^{1+\beta}$ norm that is 
exponentially small in $n$.  With $k$ fixed and $d \leq k$, the
product with $\binom{n-1}{d}$ still yields an exponentially small 
variable, thus 
\begin{equation} \label{eq:V small}
||V_n^{(j,k)}||_{1+\beta} \leq c_{j,k,\beta} e^{-\delta n}
\end{equation}
for some $\delta = \delta (j,k,\beta) > 0$.  

The remainder of the proof is an induction in two stages (Steps~5 and~6).  
In the first stage we fix $j > 1$, assume $(a)$--$(c)$ for all $(j' , k')$ 
with $j' < j$, and prove $(a)$ for $(j,k)$ with $k$ arbitrary.  In the
second stage, we prove $(c)$ for $(j,k)$ by induction on $k$,
establishing $(c)$ for $(j,1)$ and then for arbitrary $k$ by 
induction, assuming $(a)$ for $(j,k')$ where $k'$ is arbitrary 
and $(c)$ for $(j,k')$ where $k' < k$.

\ul{Step 5: Prove~$(a)$ by induction on $j$}.  Fix $j \geq 2$ and assume 
for induction that $(a)$ and $(c)$ hold for all $(j' , k)$ with $j' < j$.
The plan is this:
The quantity $p_c^s \sum_{v \in \TT_s} U_n^{(j,k)} (v)$ is $W_n$ times
the average of $U_n^{(j,k)} (v)$ over vertices $v \in \TT_s$.  Averaging
many mean zero terms will produce something exponentially small in $s$.
We will also show this quantity to be also exponentially small in $n-s$,
whereby it follows that the outer sum over $s$ is exponentially small, 
completing the proof. 

Let us first see that $U_n^{(j,k)} (v)$ has mean zero.  Expanding back
the $\Theta$ terms gives
\begin{eqnarray}
U_n^{(j,k)} (v) & = & 
\sum_{a = 2}^j \sum_{u \in \binom{\TT_1(v)}{a}}p_c^a 
   \sum_{\alpha_0 = 0}^{k} \binom{s + a}{\alpha_0} 
   \Bigg(\sum_{\beta \in \mathcal{C}_a(j)} 
   \sum_{\alpha \in \widetilde{\mathcal{C}}_a 
   (k - \alpha_0)} X_{n-s-1}^{(\beta_1,\alpha_1)}(u_1) \cdots 
   X_{n-s-1}^{(\beta_a,\alpha_a)}(u_a) \nonumber \\
&& - \sum_{i =2}^{j}\sum_{d = 0}^k c_{j,i} \binom{j}{k-d} 
   \sum_{\beta' \in \mathcal{C}_a(i)} 
   \sum_{\alpha' \in \widetilde{\mathcal{C}}_a(d - \alpha_0)} 
   X_{n-s-2}^{(\beta_1',\alpha_1')}(u_1) \cdots 
   X_{n-s-2}^{(\beta_a',\alpha_a')}(u_a)\Bigg)\,. \label{eq:summand}
\end{eqnarray}
Expanding the first product of $X$ terms gives
\begin{equation} \label{eq:X terms}
X_{n-s-1}^{(\beta_1,\alpha_1)}(u_1) 
   \cdots X_{n-s-1}^{(\beta_a,\alpha_a)}(u_a) =
   \prod_{\ell = 1}^a \left(\Delta Y_{n-s-1}^{(\beta_\ell,\alpha_\ell)}(u_\ell)
   + \sum_{\beta_\ell' = 1}^{\beta_\ell} c_{\beta_\ell,\beta_\ell'} 
   \sum_{\alpha_\ell'=0}^{\alpha_\ell} 
   \binom{\beta_\ell}{\alpha_\ell - \alpha_\ell'} 
   X_{n-s-2}^{(\beta_\ell',\alpha_\ell')}(u_\ell)\right) \, .
\end{equation}
The vertices $u_\ell$ are all distinct children of $v$.  Therefore,
their subtrees are jointly independent, hence the pairs $(\Delta Y (u_\ell) ,
X (u_\ell))$ are jointly independent.  The product~\eqref{eq:X terms} 
expands to the sum of $a$-fold products of terms, each term
in each product being either a $\Delta Y$ or a weighted sum of $X$'s,
the $a$ terms being jointly independent by the previous observation.  
Therefore, to see that the whole thing is mean
zero, we need to check that the product of the $a$ different sums of 
$X$ terms in~\eqref{eq:X terms}, summed over $\alpha$ and $\beta$ to form 
the first half of the summand in~\eqref{eq:summand}, minus the subsequent 
sum over $i, d, \beta'$ and $\alpha'$, has mean zero.  In fact we will show
that it vanishes entirely.  For given compositions $\beta := 
(\beta_1 , \ldots , \beta_a)$ and $\alpha := (\alpha_1, \ldots , \alpha_a)$, 
the product of the double sum of $X$ terms inside the round brackets 
in~\eqref{eq:X terms} may be simplified:
$$\prod_{\ell = 1}^a \left( 
   \sum_{\beta_\ell' = 1}^{\beta_\ell} c_{\beta_\ell,\beta_\ell'} 
   \sum_{\alpha_\ell'=0}^{\alpha_\ell}
   \binom{\beta_\ell}{\alpha_\ell - \alpha_\ell'} 
   X_{n-s-2}^{(\beta_\ell',\alpha_\ell')}(u_\ell)\right) \\
= \sum_{1 \preceq \beta' \preceq \beta} \sum_{0 \preceq \alpha' \preceq \alpha} 
   \prod_{\ell=1}^a c_{\beta_\ell,\beta_\ell'}
   \binom{\beta_\ell}{\alpha_\ell - \alpha_\ell'}
   X_{n-s-2}^{(\beta_\ell',\alpha_\ell')}(u_\ell) \, . 
$$
Applying the identity
\begin{equation} \label{eq:compositions}
\sum_{\substack{\beta \in \mathcal{C}_a(j) \\ \beta \succeq \beta'}}
   \left(\prod_{\ell} c_{\beta_\ell,\beta'_\ell}\right) = c_{j,i} \, ,
\end{equation}
which follows by regrouping pieces of each composition in 
$\mathcal{C}_i(j)$ into smaller compositions each with $\beta_\ell'$ parts,
then summing over $\alpha$ and $\beta$ as in~\eqref{eq:summand}
and simplifying, using~\eqref{eq:compositions} in the last line,
gives
\begin{align*} 
   \sum_{\beta \in \mathcal{C}_a(j)} \sum_{1 \preceq \beta' \preceq \beta} 
& \sum_{\alpha \in \widetilde{\mathcal{C}}_a(k-\alpha_0)} 
   \sum_{0 \preceq \alpha' \preceq \alpha}
   \prod_{\ell=1}^a c_{\beta_\ell,\beta_\ell'}
   \binom{\beta_\ell}{\alpha_\ell - \alpha_\ell'}
   X_{n-s-2}^{(\beta_\ell',\alpha_\ell')}(u_\ell) \\
& = \sum_{\beta \in \mathcal{C}_a(j)} \sum_{1 \preceq \beta' \preceq \beta} 
   \left(\prod_{\ell} c_{\beta_\ell,\beta'_\ell}\right)
   \sum_{d = 0}^k \sum_{\alpha' \in \widetilde{\mathcal{C}}_a(d - \alpha_0) } 
   \left(\prod_{\ell} X_{n-s-2}^{(\beta_\ell',\alpha_\ell')}(u_\ell)\right) 
   \sum_{ \substack{\alpha \in \widetilde{\mathcal{C}}_a(k-\alpha_0) \\ 
         \alpha \geq \alpha'}}
   \prod_{\ell=1}^a \binom{\beta_\ell}{\alpha_\ell - \alpha_\ell'} \\
& = \sum_{\beta \in \mathcal{C}_a(j)} \sum_{1 \preceq \beta' \preceq \beta} 
   \left(\prod_{\ell} c_{\beta_\ell,\beta'_\ell}\right)
   \sum_{d = 0}^k \sum_{\alpha' \in \widetilde{\mathcal{C}}_a(d - \alpha_0) } 
   \left(\prod_{\ell} X_{n-s-2}^{(\beta_\ell',\alpha_\ell')}(u_\ell)\right) 
   \binom{j}{k - d} \\
& = \sum_{i = 2}^j \sum_{\beta' \in \mathcal{C}_a(i)} 
   \sum_{\substack{\beta \in \mathcal{C}_a(j) \\ \beta \geq \beta'}}
   \left(\prod_{\ell} c_{\beta_\ell,\beta'_\ell}\right) 
   \sum_{d = 0}^k \sum_{\alpha' \in \widetilde{\mathcal{C}}_a(d - \alpha_0) } 
   \left(\prod_{\ell} X_{n-s-2}^{(\beta_\ell',\alpha_\ell')}(u_\ell)\right) 
   \binom{j}{k - d} \\
& = \sum_{i =2}^{j}\sum_{d = 0}^k c_{j,i} \binom{j}{k-d} 
   \sum_{\beta' \in \mathcal{C}_a(i)} 
   \sum_{\alpha' \in \widetilde{\mathcal{C}}_a(d - \alpha_0)} 
   X_{n-s-2}^{(\beta_1',\alpha_1')}(u_1) \cdots  
      X_{n-s-2}^{(\beta_a',\alpha_a')}(u_a) \, .
\end{align*}
This exactly cancels with the quadruple sum on the second line 
of~\eqref{eq:summand}, transforming~\eqref{eq:summand} into
$$U_n^{(j,k)} (v) =
\sum_{a = 2}^j \sum_{u \in \binom{\TT_1(v)}{a}}p_c^a
   \sum_{\alpha_0 = 0}^{k} \binom{s + a}{\alpha_0}
   \sum_{\beta \in \mathcal{C}_a(j)} 
   \sum_{\alpha \in \widetilde{\mathcal{C}}_a (k - \alpha_0)} 
   \prod_{\ell = 1}^a (*)_\ell \, , $$
where $(*)_\ell = \Delta Y_{n-s-1}^{(\beta_\ell , \alpha_\ell)} (u_\ell)$ 
for at least one value of $\ell$ in $[1,a]$, and, when not equal to that, 
is equal to the last double sum inside the brackets in~\eqref{eq:X terms}.

By the induction hypothesis, the $\Delta Y$ terms have $(1+\beta)$ 
norm bounded above by something exponentially small:
\begin{equation} \label{eq:kappa}
\lVert\Delta Y_{n-s-1}^{(\beta_\ell , \alpha_\ell)} (u_\ell) \rVert
   = O\left(\exp \left [ - \kappa_{\beta_\ell , \alpha_\ell} (n-s-1) \right ]\right) 
   \, .
\end{equation}
We note that $a$ and each $\beta_\ell$ and $\alpha_\ell$ are all bounded
above by $j$ and that in each product $X_{n-s-1}^{(\beta_1,\alpha_1)}(u_1) 
\cdots X_{n-s-1}^{(\beta_a,\alpha_a)}(u_a)$, the terms are independent.
The inductive hypothesis implies each factor $X_n^{(j,k)}$ has $L^{1+\beta}$ norm that is $O(n^{\lambda(j,k)})$.

Returning to \eqref{eq:Y-n-big-diff}, we may apply 
Lemma~\ref{lem:moment-sum} to see that for each $s$, the 
quantity $p_c^s \sum_{v \in \TT_s} U_n^{(j,k)} (v)$ is an
average of $|\TT_s|$ terms all having mean zero and $L^{1+\beta}$
bound exponentially small in $n-s$, and that averaging introduces
another exponentially small factor, $\exp(-\nu s)$.  Because
the constants $\kappa, \lambda$ and $\mu$ vary over a set of 
bounded cardinality, the product of these three upper bounds, 
$O\left(\exp (-\kappa (n-s)) \cdot \exp (- \nu s) \cdot n^{\lambda (j,k)} \right)$ 
decreases exponentially $n$.

\ul{Step 6: Prove $(c)$ by induction on $(j,k)$}.  The final stage 
of the induction is to assume $(a)$--$(c)$ for $(j' , k')$
lexicographically smaller than $(j,k)$ and prove $(c)$ for $(j,k)$.
We use the following easy fact.
\begin{lem} \label{lem:partial}
If $a_n \to \infty$ and $a_n \sim b_n$ then the partials sums are
also asymptotically equivalent: $\sum_{k=1}^n a_k \sim \sum_{k=1}^n b_k$.
$\Cox$
\end{lem}

We begin the inductive proof of with the case $k=0$.  Rearranging the
conclusion of Corollary~\ref{cor:moment-martingale}, we see that
$$X_n^{(j,0)} = Y_n^{(j,0)} + \sum_{m=0}^{n-1} \sum_{i=1}^{j-1} X_m^{(i,0)} \, .$$
Using Lemma~\ref{lem:partial} the induction hypothesis, and the fact
that $Y_n^{(j,0)} = O(1)$ simplifies this to
\begin{align*}
X_n^{(j,0)} &\sim \sum_{m = 0}^{n-1} \left[ \sum_{i=1}^{j-1} m^{i-1} c_i' W \right]  \\
&\sim \sum_{m=0}^{n-1} c_{j-1}' m^{j-2} W \\
&\sim c_j' n^{j-1}W 
\end{align*}
where $c_j' = \lim_{n \to \infty} \frac{c_{j-1}'}{n} \sum_{m=0}^{n-1} (m/n)^{j-2}
= c_{j-1}' / (j-1)$.

The base case $k=0$ being complete, we induct on $k$.  The same reasoning,
observing that the first inner sum is dominated by the $d=k-1$ term
and the second by the $i = j-1$ and $d=k$ term, gives
\begin{align*}
X_n^{(j,k)} &=  Y_n^{(j,k)} + \sum_{m=0}^{n-1}\left[\sum_{d = 0}^{k-1} \binom{j}{k-d} X_m^{(j,d)} + \sum_{i = 1}^{j-1}c_{j,i}\sum_{d = 0}^k \binom{j}{k-d} X_m^{(i,d)} \right] \\
&\sim \sum_{m = 0}^{n-1}\left(j X_m^{(j,k-1)} + c_{j,j-1} X_m^{(j-1,k)}\right) \\
&\sim \sum_{m = 0}^{n-1} \left[j m^{j+k-2} W c_{j,k-1}' + c_{j,j-1}c_{j-1,k}' W m^{j+k-2}\right] \\
&\sim W \left(\frac{j c_{j,k-1}' + c_{j,j-1}c_{j-1,k}' }{j+k-1}  \right) n^{j+k-1}\,.
\end{align*}
Setting $\disp c_{j,k}' := \frac{j c_{j,k-1}' + c_{j,j-1}c_{j-1,k}' }{j+k-1}$ completes the almost-sure part of $(c)$ by induction.  The $L^{1+\beta}$ portion is similar, but we need one more easy fact. \begin{lem}
	If $a_n \to \infty$ and $b_n \to 0$ then $\sum_{k=1}^n a_n b_n = o\left( \sum_{k=1}^n a_k \right)$. $\Cox$
\end{lem} 
This allows us to calculate
\begin{align*}
\Big\lVert X_n^{(j,k)}&n^{-(j+k-1)} - W c_{j,k}' \Big\rVert_{L^{1+\beta}} \\
=&\left\lVert Y_n^{(j,k)}n^{-(j+k-1)} + n^{-(j+k-1)}\sum_{m=0}^{n-1}\left[\sum_{d = 0}^{k-1} \binom{j}{k-d} X_m^{(j,d)} + \sum_{i = 1}^{j-1}c_{j,i}\sum_{d = 0}^k \binom{j}{k-d} X_m^{(i,d)} \right]  - W c_{j,k}'\right\rVert_{L^{1+\beta}} \\
\leq&\ o(1) + n^{-(j+k-1)}\left\lVert \sum_{m = 0}^{n-1} \left(j X_m^{(j,k-1)} + c_{j,j-1} X_m^{(j-1,k)}\right) - n^{j+k-1}W c_{j,k}' \right\rVert_{L^{1+\beta}} \\
\leq&\ o(1) + n^{-(j + k -1)} \sum_{m=0}^{n-1} m^{j+k-2}\left( j \left\lVert \frac{X_m^{(j,k-1)}}{m^{j+k-2}} - W c_{j,k-1}' \right\rVert_{L^{1 + \beta}} + c_{j,j-1} \left\lVert \frac{X_m^{(j-1,k)}}{ m^{j+k-2}} - W c_{j-1,k}'\right\rVert_{L^{1+\beta}}\right) \\
=&\ o(1)\,.
\end{align*}
This completes the induction, and the proof of Theorem \ref{th:Y-jk-mart}.
$\Cox$
\end{proof}

\subsection{Expansion at Criticality} \label{ss:diff}

An easy inequality similar to classical Harris inequality~\cite{harris60}
is as follows.
\begin{lem}\label{lem:fkg}
For finite sets of edges $E_1,E_2,E_3$, define $A_j$ to be the event 
that all edges in $E_j$ are open.  Then 
$$\P[A_1 \cap A_2] \cdot \P[A_1 \cap A_3] 
   \leq \P[A_1] \cdot \P[A_1 \cap A_2 \cap A_3] \, .$$
\end{lem}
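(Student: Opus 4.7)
The plan is to reduce the inequality to an elementary set-theoretic statement by exploiting the independence and monotonicity of the edge variables. Since each $A_j$ is the event that every edge of $E_j$ is open, under the independent Bernoulli$(p)$ measure we have $\P[A_i\cap A_j]=p^{|E_i\cup E_j|}$ and $\P[A_1\cap A_2\cap A_3]=p^{|E_1\cup E_2\cup E_3|}$; the boundary cases $p\in\{0,1\}$ are trivial, so I would assume $p\in(0,1)$.

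Taking logarithms and dividing by $\log p < 0$, the desired inequality becomes
\[
|E_1\cup E_2|+|E_1\cup E_3| \;\geq\; |E_1|+|E_1\cup E_2\cup E_3|.
\]
Rewriting this as $|E_1\cup E_2|-|E_1| \geq |E_1\cup E_2\cup E_3|-|E_1\cup E_3|$ and simplifying both sides via $|A\cup B|-|A|=|B\setminus A|$ reduces the claim to
\[
|E_2\setminus E_1| \;\geq\; |E_2\setminus (E_1\cup E_3)|,
\]
which is immediate from $E_2\setminus(E_1\cup E_3)\subseteq E_2\setminus E_1$ (in fact the deficit is exactly $|E_2\cap E_3\setminus E_1|$). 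This is the only substantive step, and it is essentially just submodularity of the cardinality function.

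A more conceptual (but equivalent) view is that dividing both sides of the stated inequality by $\P[A_1]^2$ converts it to $\P[A_2\mid A_1]\,\P[A_3\mid A_1]\leq \P[A_2\cap A_3\mid A_1]$, which is just the Harris inequality for the two increasing events $A_2$, $A_3$ under the conditional measure given $A_1$ (still a Bernoulli product on the remaining edges, with the edges of $E_1$ deterministically open). Either proof is short; there is no real obstacle beyond bookkeeping the set inclusions, and I would favor the direct combinatorial version since it avoids invoking heavy machinery for what is essentially an elementary monotonicity statement.
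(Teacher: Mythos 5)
Your proof is correct and follows essentially the same route as the paper: both reduce the inequality to the cardinality statement $|E_1 \cup E_2| + |E_1 \cup E_3| \geq |E_1| + |E_1 \cup E_2 \cup E_3|$ by comparing powers of $p \leq 1$, and then verify that elementary set inequality (the paper cites inclusion-exclusion, you use $|A \cup B| - |A| = |B \setminus A|$ — the same submodularity fact). No gaps; the conditional Harris remark is a nice aside but not needed.
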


\begin{proof}
Writing each term explicitly, this is equivalent to the inequality 
$$p^{|E_1 \cup E_2| + |E_1 \cup E_3|} 
   \leq p^{|E_1| + |E_1 \cup E_2 \cup E_3|} \, .$$
Because $p \leq 1$, this is equivalent to 
$$|E_1 \cup E_2| + |E_1 \cup E_3| \geq |E_1| + |E_1 \cup E_2 \cup E_3| \, ,$$
which is easily proved for all triples $E_1, E_2, E_3$ by inclusion-exclusion.
$\Cox$
\end{proof}

Before finding the expansion at criticality, we show that focusing only
on the first $n$ levels of the tree and averaging over the remaining 
levels causes only a subpolynomial error in an appropriate sense.

\begin{pr}\label{pr:bonf-replace}
Suppose $\E[Z^{(2k -1)(1 + \beta)}] < \infty$, and set $p = p_c + \ee$.
Fix $\delta > 0$ and let $n = n(\ee) = \lceil\ee^{-\delta}\rceil$.  Then 
for $\delta$ sufficiently small and each $\ell > 0$, 
\begin{equation}\label{eq:average-error}
\sum_{\{u_i\} \in \binom{\TT_n}{k}}
   \P_\TT[\rtt \conn_{p} u_1,\ldots,u_k]
   \left(g(\TT(u_1),p)\cdots g(\TT(u_k),p) - g(p)^k \right) 
   = o(\ee^\ell)
\end{equation}
$\GW$-almost surely as $\ee \to 0^+$.
\end{pr}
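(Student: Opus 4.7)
The plan is to condition on $\T_n$, the $\sigma$-field generated by the tree through level $n$, so that the subtrees $\TT(v)$ for $v\in\TT_n$ become i.i.d.\ Galton-Watson. The centered variables $\xi_v := g(\TT(v),p) - g(p)$ are then conditionally i.i.d.\ with mean zero, bounded by $1$, and satisfying $\Vert\xi_v\Vert_{L^2}^2 = \Var(g(\TT,p)) = \Theta(\ee^2)$ by Proposition~\ref{pr:K}; under the moment hypothesis on $Z$ one also has $\Vert\xi_v\Vert_{L^q} = O(\ee)$ for each fixed $q \leq (2k-1)(1+\beta)$ via Theorem~\ref{th:g-expansion}. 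I would first apply the telescoping identity
$$\prod_{u \in U} g(\TT(u),p) - g(p)^k = \sum_{\emptyset \neq V \subseteq U} g(p)^{k-|V|} \prod_{v \in V} \xi_v$$
and exchange the order of summation. Writing $w(U) := \P_\TT[\rtt\conn_p U] = p^{|\TT(U)|}$, which is $\T_n$-measurable, the left-hand side of~\eqref{eq:average-error} becomes
$$L = \sum_{s=1}^k g(p)^{k-s} L_s, \qquad L_s := \sum_{V \in \binom{\TT_n}{s}} G_V \prod_{v \in V} \xi_v, \qquad G_V := \sum_{U \supseteq V,\,|U|=k} w(U).$$

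Next I would compute conditional second moments. Because the $\xi_v$ are i.i.d.\ mean zero, $\E[\prod_{v\in V} \xi_v \cdot \prod_{v'\in V'} \xi_{v'} \mid \T_n] = 0$ whenever $V\neq V'$, so $\Var(L_s\mid \T_n) = (\E[\xi_v^2])^s \sum_V G_V^2$. To bound $\sum_V G_V^2 = \sum_{U,U'} w(U)w(U')\binom{|U\cap U'|}{s}$ I would apply Lemma~\ref{lem:fkg} with $A_1 = \{\rtt\conn_p U\cap U'\}$, $A_2 = \{\rtt\conn_p U\setminus U'\}$, $A_3 = \{\rtt\conn_p U'\setminus U\}$ to obtain $w(U)w(U') \leq w(U\cap U')w(U\cup U')$, then estimate the resulting double sums $\sum_{I \subseteq J} w(I)w(J)$ (with $J$ a $(2k-i)$-subset of $\TT_n$) by $O(n^{m-1}W)$ via Theorem~\ref{th:Y-jk-mart}(c); note that the correction factor $(p/p_c)^{|\TT(J)|} \leq (1+\mu\ee)^{kn} = 1+o(1)$ since $\ee n = \ee^{1-\delta} \to 0$. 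The dominant $i=s$ contribution gives $\sum_V G_V^2 = O(n^{2k-s-1}W)$, whence $\Vert g(p)^{k-s} L_s\Vert_{L^2} = O(\ee^k n^{(2k-s-1)/2} W^{1/2})$ for each $s$; the worst case $s=1$ yields $\Vert L\Vert_{L^2} = O(\ee^{k - \delta(k-1)} W^{1/2})$ after substituting $n = \lceil \ee^{-\delta} \rceil$.

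To promote this $L^2$ bound to an almost-sure $o(\ee^\ell)$ rate, I would apply a Rosenthal-type inequality for multilinear polynomials of degree at most $k$ in i.i.d.\ bounded random variables (the moment bound $\Vert\xi_v\Vert_{L^{2p}} = O(\ee)$ supplies the inputs), obtaining $\Vert L\Vert_{L^{2p}} = O(\ee^{k-\delta(k-1)} W^{1/2})$ with implicit constant depending on $p$ and $k$. Chebyshev along the geometric sequence $\ee_m := 2^{-m}$ combined with Borel-Cantelli (taking $p$ large) then gives $\ee_m^{-\ell}|L(\ee_m)| \to 0$ almost surely, once $\delta < (k-\ell)/(k-1)$ is chosen depending on $\ell$. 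Interpolation to continuous $\ee \to 0^+$ uses that $n(\ee)$ is piecewise constant in $\ee$ and on each maximal interval of constancy $L(\ee)$ depends smoothly on $\ee$ with derivatives controlled by elementary estimates, making the supremum of $|L|$ over each dyadic subinterval comparable to the endpoint values.

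The main obstacle is the FKG-plus-combinatorial bookkeeping in the variance computation: one must enumerate pairs of $k$-subsets $(U,U')$ by $|U\cap U'|$ and, after applying Lemma~\ref{lem:fkg}, track the resulting multiple sums while verifying that the $(p/p_c)^{|\TT(J)|}$ correction remains $1+o(1)$ uniformly over the relevant $(2k-i)$-subsets. A secondary subtlety is ensuring the Rosenthal bound has the correct dependence on the degree $s$, so that the $s=1$ term remains the bottleneck and fixes the admissible range of $\ell$ via $\delta < (k-\ell)/(k-1)$.
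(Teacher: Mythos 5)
There is a genuine gap, and it sits exactly at the point your variance computation is supposed to deliver the strength of the statement. After you apply Lemma~\ref{lem:fkg} to get $w(U)w(U') \leq w(U\cap U')\,w(U\cup U')$, note that every pair $(U,U')$ appearing in $\sum_V G_V^2$ has $U\cap U' \supseteq V \neq \emptyset$, so $w(U\cap U') = p^{|\TT(U\cap U')|} \leq (p_c+\ee)^n \leq 2p_c^n$: each surviving pair carries a factor that is \emph{exponentially} small in $n$. Your estimate ``$\sum_{I\subseteq J} w(I)w(J) = O(n^{m-1}W)$'' discards this factor (in effect bounding $w(I)\leq 1$), and that loss is fatal: you end with $\lVert L\rVert_{L^2} = O(\ee^{k-\delta(k-1)})$, a merely polynomial bound, and your own admissibility condition $\delta < (k-\ell)/(k-1)$ forces $\ell < k$ with $\delta$ depending on $\ell$. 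The proposition asserts $o(\ee^\ell)$ for \emph{every} $\ell>0$ with one fixed small $\delta$, and this is exactly what is used in Lemma~\ref{lem:g-bonf}, where the tolerance $\ell$ exceeds the subset size $k$ (e.g.\ the $k=1$ Bonferroni term must be replaced with error $o(\ee^\ell)$ for large $\ell$); in that regime no $\delta>0$ works in your scheme. The paper's proof keeps the factor $p_c^n$ (via $\P_\TT[\rtt\conn_p u_1,\ldots,u_r]\leq 2p_c^n$ together with the Harris-type inequality), getting a conditional variance of order $p_c^n n^{2k-2}$, i.e.\ exponentially small in $n=\lceil\ee^{-\delta}\rceil$ and hence super-polynomially small in $\ee$; this is what makes Chebyshev--Borel--Cantelli beat every power $\ee^\ell$ simultaneously. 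Retaining $w(U\cap U')\leq 2p_c^n$ in your $\sum_V G_V^2$ bound would repair this part and make the Rosenthal upgrade unnecessary.

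A second, smaller gap is the interpolation from a discrete set of $\ee$'s to all $\ee\to 0^+$. On a dyadic interval the length is comparable to $\ee$ itself, so passing from endpoint control to a supremum via ``derivatives controlled by elementary estimates'' would require a uniform bound on $\partial_p\, g(\TT(u),p)$ near $p_c$ for all $u\in\TT_n$; that is not elementary (it is essentially the content of the later sections of the paper, and invoking those results here would be circular in the paper's logical order). The paper instead sandwiches the quantity using monotonicity in $p$ over a \emph{polynomially fine} grid of spacing $\asymp \ee^{\ell+2}$, and controls the change across a grid cell by the Mean Value Theorem applied only to the annealed function $g$ and to $p^{|\TT(U)|}$, both of which are harmless. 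Your interpolation step needs an argument of this kind rather than smoothness of the quenched quantities.
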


\begin{proof}  For sufficiently small $\delta > 0$, we note that 
$(p_c + \ee)^{m} \leq 2 p_c^m$ for each $m \in [n,kn]$ and for $\ee$ 
sufficiently small.  This will be of use throughout, and is responsible 
for the appearance of factors of $2$ in the upper bounds.
	
Next, bound the variance of 
$$\sum_{\{u_i\} \in \binom{\TT_n}{k}}\P_\TT[\rtt \conn_{p} u_1,\ldots,u_k]
   \left[g(\TT(u_1),q)\cdots g(\TT(u_k),q) - g(q)^k \right]$$ 
for a fixed vertex, $q$.  This expression has mean zero conditioned on $\T_n$.
Its variance is equal to the expected value of its conditional variance 
given $\T_n$.  We therefore square and take the expectation, where
the second sum in the second and third lines are over pairs of disjoint  
$k$-tuples of points.
\begin{align*}
\E &\left[\left(\sum_{\{u_i\} \in \binom{\TT_n}{k}}
   \P_\TT[\rtt \conn_{p} u_1,\ldots,u_k]
   \left(g(\TT(u_1),q)\cdots g(\TT(u_k),q) - g(q)^k \right) \right)^2 
   \, \Bigg| \, \T_n \right] \\
& = \frac{1}{(k!)^2}\sum_{r = 1}^k r! 
   \sum_{\{u_i\}_{i=1}^k,\{v_i\}_{i=r+1}^k~\mathrm{dist.}} 
   \binom{k}{r}^2 \P_\TT[\rtt \conn_p u_1,\ldots,u_k]
   \P_\TT[\rtt \conn_p u_1,\ldots,u_r,v_{r+1},\ldots,v_k] C_r \\
& \leq \frac{1}{(k!)^2}\sum_{r = 1}^k r! 
   \sum_{\{u_i\}_{i=1}^k,\{v_i\}_{i=r+1}^k~\mathrm{dist.}} 
   \binom{k}{r}^2 \P_\TT[\rtt \conn_p u_1,\ldots,u_r]
   \P_\TT[\rtt \conn_p u_1,\ldots,u_k,v_{r+1},\ldots,v_k] C_r \\
& \leq 4 p_c^n \sum_{r = 1}^k \binom{2k - r}{k}\binom{k}{r}  
   C_r X_n^{(2k -r)} \, .
\end{align*} 
Here we have used the bounds $\P_\TT[\rtt \conn_p u_1,\ldots,u_r] 
\leq 2 p_c^n$ and $\P_\TT[\rtt \conn_p u_1,\ldots, v_k] \leq 
2 \P_\TT[\rtt \conn_{p_c} u_1,\ldots,v_k]$ and we have defined 
$$C_r := \E\left[\left(g(\TT(u_1),q)\cdots g(\TT(u_k),q) 
   - g(q)^k\right)\left(g(\TT(u_1),q)\cdots g(\TT(u_r),q)  
   g(\TT(v_{r+1}),q)\cdots g(\TT(v_{k}),q) - g(q)^k\right) \right] \, . $$
Taking the expected value and using Theorem \ref{th:Y-jk-mart} 
along with Jensen's Inequality and induction gives that the 
variance is bounded above by $C p_c^n n^{2k - 2}$ for some constant $C$.  
This is exponentially small in $n$, so there exist constants $c_k,
C_k > 0$ so that the variance is bounded above by $C_k e^{-c_k n}$.  
	
Define $a = a(m,r) = \frac{1}{m} + \frac{r}{m^{\ell+2}}$ and 
$b = b(m,r)= \frac{1}{m} + \frac{r +1}{m^{\ell+2}}$.  For each 
$\ee \in (0,1)$ there exists a unique pair $(m,r)$ such that 
$\ee \in [1/m, 1/(m-1))$ and $\ee \in [a,b)$. Assume for now 
that $\lceil a^{-\delta} \rceil = \lceil b^{-\delta} \rceil $; 
the case in which the two differ is handled at the end of the proof. 
For all $\ee \in [a,b)$ and $p = p_c + \ee$, we have 
\begin{align*}
\sum_{\{u_i\} \in \binom{\TT_n}{k}} \P_\TT
& \left[\rtt \conn_p u_1,\ldots,u_k\right] g(\TT(u_1),p)\cdots g(\TT(u_k),p) \\
& \leq \sum_{\{u_i\} \in \binom{\TT_n}{k}} \P_\TT
   \left[\rtt \conn_{p_c + b} u_1,\ldots,u_k\right] 
   g(\TT(u_1),p_c + b)\cdots g(\TT(u_k),p_c + b) \, .
\end{align*}
By Chebyshev's inequality, the conditional probability that the 
right-hand side is $b^{\ell + 1}$ greater than its mean, given $\T_n$, 
is at most $C_k\cdot b^{-(2\ell + 2)}e^{-c_k n}$.   Because 
$n = \lceil b^{-\delta}\rceil$, this is finite when summed 
over all possible $m$ and $r$, implying that all but finitely often
\begin{align*}
\sum_{\{u_i\} \in \binom{\TT_n}{k}} 
   \P_\TT\left[\rtt \conn_{p_c + b} u_1,\ldots,u_k\right]
& g(\TT(u_1),p_c + b)\cdots g(\TT(u_k),p_c + b) \\
& \leq g(p_c + b)^k\sum_{\{u_i\} \in \binom{\TT_n}{k}} 
\P_\TT\left[\rtt \conn_{p_c + b} u_1,\ldots,u_k\right] + b^{\ell+1} \, .
\end{align*}
	
By a similar argument, we obtain the lower bound 
\begin{align*}
\sum_{\{u_i\} \in \binom{\TT_n}{k}} 
   \P_\TT\left[\rtt \conn_{p_c + b} u_1,\ldots,u_k\right]
   & g(\TT(u_1),p_c + b)\cdots g(\TT(u_k),p_c + b) \\
& \geq g(p_c + a)^k\sum_{\{u_i\} \in \binom{\TT_n}{k}} 
   \P_\TT\left[\rtt \conn_{p_c + a} u_1,\ldots,u_k\right] - b^{\ell+1} \, .
\end{align*}
Letting $(*)$ denote the absolute value of the left-hand-side 
of~\eqref{eq:average-error}, we see that 
\begin{eqnarray*}
(*) & \leq & g(p_c + b)^k\sum_{\{u_i\} \in \binom{\TT_n}{k}} 
   \P_\TT\left[\rtt \conn_{p_c + b} u_1,\ldots,u_k\right] 
   - g(p_c + a)^k\sum_{\{u_i\} \in \binom{\TT_n}{k}} 
   \P_\TT\left[\rtt \conn_{p_c + a} u_1,\ldots,u_k\right]+ 2b^{\ell+1}   \\
& \leq & 2 (g(p_c + b)^k - g(p_c + a)^k) X_n^{(k)} + g(p_c + b)^k 
   \left(\P_\TT\left[\rtt \conn_{p_c + b} u_1,\ldots,u_k\right] 
   - \P_\TT\left[\rtt \conn_{p_c + a} u_1,\ldots,u_k\right]\right) 
   + 2b^{\ell + 1} \\
& \leq & 2 (g(p_c + b)^k - g(p_c + a)^k) X_n^{(k)} 
   + g(p_c + b)^k \frac{2 \cdot n\cdot k (b - a)}{p_c} X_n^{(k)} 
   + 2b^{\ell + 1} \, ,
\end{eqnarray*}
where the last inequality is via the Mean Value Theorem.
	
Dividing by $\ee^\ell$ and setting $C_k = 2k / p_c$, we have 
\begin{align*}
2 \frac{g(p_c + b)^k - g(p_c +a)^k}{\ee^\ell}&X_n^{(k)} 
   + C_k \cdot n\cdot g(p_c + b)^k 
   \frac{b - a}{\ee^\ell} X_{n}^{(k)}+2b(b/\ee)^\ell \\
& \leq  2\frac{b - a}{\ee^\ell} \cdot \frac{g(p_c+b)^k - g(p_c + a)^k}{b - a} 
   X_n^{(k)} + C_k\cdot n \cdot g(p_c + b)^k \frac{b - a}{\ee^\ell} 
   X_{n}^{(k)} +  2b (b/a)^\ell \\
& \leq  2k\frac{b - a}{\ee^\ell} \max_{x \in [p_c,1]}  
   g'(x) X_n^{(k)} + C_k \cdot n \cdot g(p_c + b)^k 
   \frac{b - a}{\ee^\ell} X_{n}^{(k)} + 2b \left ( \frac{b}{a} \right )^\ell 
\end{align*}
again by the Mean Value Theorem.  
	
By Theorem \ref{th:Y-jk-mart}$(c)$, $n^{-(k-1)} X_n^{(k)}$ 
converges as $n \to \infty$.  By definition of $b,a$ and $n$, 
$\frac{(b - a)n^{k}}{\ee^\ell} \to 0$ as $\ee \to 0$ for 
$\delta$ sufficiently small, thereby completing the proof except in 
the case when $\lceil a^{-\delta} \rceil \neq \lceil b^{-\delta} \rceil$.	
	
When $\lceil a^{-\delta} \rceil$ and $\lceil b^{-\delta} \rceil$ differ, 
we can split the interval $[a,b)$ into subintervals
$[a,c-\delta'),[c-\delta',c)$ and $[c,b)$, where $c \in (a,b)$ 
is the point where $\lceil x^{-\delta} \rceil$ drops.  Repeating 
the above argument for the first and third intervals, taking $\delta'$ 
sufficiently small, and exploiting continuity of the expression 
in~\eqref{eq:average-error} on $[a,c)$ provides us with desired 
asymptotic bounds for the middle interval, hence the proof is complete.		$\Cox$
\end{proof}

As a midway point in proving Theorem~\ref{th:g-expansion}, we obtain
an expansion for $g(\TT , p_c + \ee)$ that for a given $\ee$ is 
measurable with respect to $\T_{n(\ee)}$, where $n(\ee)$ grows like 
a small power of $\ee^{-1}$.  

\begin{lem}\label{lem:g-bonf}
Suppose $\E[Z^{(2\ell + 1)(1 + \beta)}] < \infty$ for some $\ell \geq 1$ 
and $\beta > 0$.  Define $n(\ee) := \lceil \ee^{-\delta} \rceil$. 
Then for $\delta > 0$ sufficiently small, we have $\GW$-a.s.\ the 
following expansion as $\ee \to 0^+$: 
$$g(\TT,p_c+\ee) = \sum_{i = 1}^\ell \left(\sum_{j = 1}^{i}   
   (-1)^{j+1} \sum_{d = j}^i p_c^d r_{j,d} X_{n(\ee)}^{(j,i-d)}\right) 
   \mu^i \ee^i + o(\ee^\ell).$$
\end{lem}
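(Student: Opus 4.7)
The approach is truncated inclusion--exclusion (Bonferroni) at depth $n=n(\ee)$, combined with Proposition~\ref{pr:bonf-replace} to replace the random quenched subtree survival probabilities $g(\TT(u_i),p)$ by the deterministic annealed value $g(p)$, then Taylor expansion in $\ee$ to read off the $X_n^{(j,k)}$ coefficients.

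Starting from $H_\TT(p) = \bigcup_{u \in \TT_n}\bigl(\{\rtt \conn_p u\} \cap H_{\TT(u)}(p)\bigr)$, the independence of disjoint edge sets shows that for any $\{u_1,\ldots,u_j\} \in \binom{\TT_n}{j}$,
$$\P_\TT\Bigl[\bigcap_{i=1}^j\bigl(\{\rtt \conn_p u_i\} \cap H_{\TT(u_i)}(p)\bigr)\Bigr] = \P_\TT[\rtt \conn_p u_1,\ldots,u_j]\, \prod_{i=1}^j g(\TT(u_i),p).$$
Bonferroni's inequality at level $\ell$ then yields
$$\left| g(\TT,p) - \sum_{j=1}^\ell (-1)^{j+1} \sum_{\{u_i\} \in \binom{\TT_n}{j}} \P_\TT[\rtt \conn_p u_1,\ldots,u_j] \prod_{i=1}^j g(\TT(u_i),p)\right| \le S_{\ell+1},$$
where $S_{\ell+1}$ is the corresponding sum at size $\ell+1$. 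The hypothesis $\E[Z^{(2\ell+1)(1+\beta)}]<\infty$ is precisely what Proposition~\ref{pr:bonf-replace} requires for tuples of size $k$ up to $\ell+1$.

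Applying Proposition~\ref{pr:bonf-replace} at $k=\ell+1$ and using $g(p)^{\ell+1}=O(\ee^{\ell+1})$ together with $X_n^{(\ell+1)}=O(n^\ell W)$ from Theorem~\ref{th:Y-jk-mart}$(c)$ (combined with $(p_c+\ee)^{|T|}\le 2p_c^{|T|}$ for relevant $|T|$) yields $S_{\ell+1}=O(\ee^{\ell+1-\ell\delta})+o(\ee^\ell)=o(\ee^\ell)$ for $\delta$ small enough. For each $j\le\ell$, Proposition~\ref{pr:bonf-replace} replaces $\prod_i g(\TT(u_i),p)$ by $g(p)^j$ up to $o(\ee^\ell)$ error. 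The binomial identity combined with the definition of $X_n^{(j,k)}$ gives
$$\sum_{\{u_i\} \in \binom{\TT_n}{j}} (p_c+\ee)^{|T(u_1,\ldots,u_j)|} = \sum_{k \ge 0} \mu^k\, \ee^k\, X_n^{(j,k)}.$$
Multiplying by the annealed expansion $g(p)^j = \sum_{d \ge j} r_{j,d}\,\ee^d + o(\ee^\ell)$ from~\eqref{eq:rmj} (justified by Proposition~\ref{pr:g-rec-rel} under the moment hypothesis), collecting the coefficient of $\ee^i$ for $i \le \ell$, and using the identity $\mu^{i-d}=\mu^i p_c^d$ produces the claimed formula after reversing the order of summation in $i$ and $j$.

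The principal technical point is the calibration of $\delta$. Every step — the Bonferroni truncation bound on $S_{\ell+1}$, the replacement error in Proposition~\ref{pr:bonf-replace}, and the truncations in the Taylor expansions of $g(p)^j$ and of $(p_c+\ee)^{|T(u_1,\ldots,u_j)|}$ in the binomial sum over $k$ — introduces an error polynomial in $n=\lceil\ee^{-\delta}\rceil$ (with the polynomial growth rates of $X_n^{(j,k)}$ supplied by Theorem~\ref{th:Y-jk-mart}$(c)$) multiplied by positive powers of $\ee$. All four error sources must be simultaneously $o(\ee^\ell)$ $\GW$-almost surely, and choosing $\delta$ smaller than the minimum critical value coming from these competing constraints makes the argument go through; the remainder is bookkeeping.
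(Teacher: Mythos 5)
Your proposal is correct and follows essentially the same route as the paper: decompose $H_\TT(p)$ over level-$n(\ee)$ vertices, truncate the inclusion--exclusion at $\ell$ (your one-sided Bonferroni bound via $S_{\ell+1}$ is just a repackaging of the paper's sandwich between the $2j$ and $2j\pm1$ truncations, with the extra term killed by the same $g(p)^{\ell+1}X_{n}^{(\ell+1,0)}$ estimate), invoke Proposition~\ref{pr:bonf-replace} with $k$ up to $\ell+1$ exactly as the moment hypothesis permits, expand $(p_c+\ee)^{|T(u_1,\ldots,u_j)|}$ binomially to produce the $X_n^{(j,k)}$, multiply by the annealed expansion of $g(p)^j$, and collect coefficients using $\mu^{i-d}=\mu^i p_c^d$. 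Your closing calibration remark about all error terms being a positive power of $\ee$ times a polynomial in $n$ is the same bookkeeping the paper performs in~\eqref{eq:Bon-j-second}, so the argument matches.
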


\begin{proof}
For each $j$ and $n$, define 
\begin{align*}
\widetilde{\Bon}_n^{(j)}(\ee) 
& := \sum_{\{v_i\} \in \binom{\TT_n}{j}}
   \P_\TT[\rtt \conn_p v_1,\ldots,v_j]g(\TT(v_1),p)\cdots g(\TT(v_j),p) \\
\text{ and } \Bon_n^{(j)}(\ee) &:= 
   \sum_{\{v_i\} \in \binom{\TT_n}{j}}
   \P_\TT[\rtt \conn_p v_1,\ldots,v_j]g(p)^j
\end{align*} 
where we write $p = p_c + \ee$.  Applying the Bonferroni inequalities 
to the event $\{ \rtt \conn_p \infty \} = \bigcup_{v \in T_n}
\{ \rtt \conn_p v \conn \infty \}$ yields
\begin{equation}\label{eq:bonf}
\sum_{i = 1}^{2j} (-1)^{i+1} \cdot \widetilde{\Bon}_{n(\ee)}^{(i)}(\ee)
   \leq  g(\TT,p_c + \ee) \leq \sum_{i = 1}^{2j\pm 1}(-1)^{i+1} \cdot  
   \widetilde{\Bon}_{n(\ee)}^{(i)}(\ee)
\end{equation} for each $j$, where the $\pm$ may be either a plus or minus.   
	
For sufficiently small $\delta > 0$, Proposition~\ref{pr:bonf-replace} 
allows us to replace each $\widetilde{\Bon}_{n(\ee)}^{(i)}(\ee)$ 
with $\Bon_{n(\ee)}^{(i)}(\ee)$, introduce an $o(\ee^{\ell})$ error term,
provided $\E[Z^{(2i - 1)(1 + \beta)}] < \infty$.  Moreover, we note 
$$\Bon_{n(\ee)}^{(i)}(\ee) = g(p_c + \ee)^i \sum_{\{v_r\} \in 
   \binom{\TT_{n(\ee)}}{i}} \P_\TT [\rtt \conn_{p_c+\ee} v_1,\ldots,v_i] 
   \leq C g(p_c + \ee)^i X_{n(\ee)}^{(i,0)} = o(\ee^{i-1}) \, .$$
The consant $C$ is introduced when we bound 
$(1 + \ee/p_c)^{|\TT(v_1,\ldots,v_i)|}$ from above by a constant $C$ for 
$\delta$ sufficiently small; the limit follows from 
Theorem~\ref{th:Y-jk-mart}$(c)$.  For each $j$, 
apply~\eqref{eq:bonf} to show 
\begin{equation}\label{eq:Bon-j-first}
g(\TT,p_c + \ee) = \sum_{j = 1}^\ell (-1)^{j+1} \Bon_n^{(j)}(\ee) 
   + o(\ee^\ell) \, .
\end{equation}
Now expand 
\begin{align}
\Bon_n^{(j)}(\ee) &= g(p_c + \ee)^j \sum_{\{v_i\} \in 
   \binom{\TT_n}{j}} (p_c + \ee)^{|\TT(v_1,\ldots,v_j)|} \nonumber \\
& = \left(\sum_{i = j}^\ell r_{j,i} \ee^i + o(\ee^\ell)\right) 
   \sum_{\{v_i\} \in \binom{\TT_n}{j}} 
   p_c^{|\TT(v_1,\ldots,v_j)|}(1 + \ee/p_c)^{|\TT(v_1,\ldots,v_j)|} 
   \nonumber \\
&=  \left(\sum_{i = j}^\ell r_{j,i} \ee^i + o(\ee^\ell)\right) 
   \sum_{\{v_i\} \in \binom{\TT_n}{j}} p_c^{|\TT(v_1,\ldots,v_j)|}
   \left(\sum_{i = 0}^\ell \binom{|\TT(v_1,\ldots,v_j)|}{i} 
   \frac{\ee^i}{p_c^i} + O(n^{\ell+1}\ee^{\ell+1})\right) \nonumber \\
& = \left(\sum_{i = j}^\ell r_{j,i} \ee^i + o(\ee^\ell)\right) 
   \left(\sum_{i = 0}^\ell X_n^{(j,i)} \frac{\ee^i}{p_c^i}  
   + o(\ee^\ell)\right) \nonumber \\
& = \sum_{i = j}^\ell \mu^i \ee^i \left(\sum_{d = j}^{i} p_c^d 
   r_{j,d} X_n^{(j,i-d)} \right) + o(\ee^\ell) \label{eq:Bon-j-second} \, .
\end{align}
Plugging this into~\eqref{eq:Bon-j-first} completes the Lemma. 
$\Cox$
\end{proof}

We are almost ready to prove Theorem~\ref{th:g-expansion}.  We have
dealt with the martingale part.  What remains is to get rid of the
predictable part.  The following combinatorial identity is the key
to making the predictable part disappear.

\begin{lem}\label{lem:constants-sum}
Fix $i \geq 1$ and suppose $\E[Z^{i+1}] < \infty$; then for each 
$a,b \leq i$ we have 
$$\sum_{d = 1}^i \sum_{j = 1}^i (-1)^{j-1} p_c^{d} r_{j,d} c_{j,a} 
   \binom{j}{b - d} = (-1)^{a + 1} p_c^b r_{a,b} \, .$$
\end{lem}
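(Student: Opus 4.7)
The identity is essentially an algebraic consequence of the fixed-point equation $1 - \phi(1 - p g(p)) = g(p)$. My strategy is to reinterpret both sides as coefficients of a single formal power series in a variable $\ee$, and then to collapse the alternating sum over $j$ using that fixed-point equation.

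First I recast the ingredients. Because $m_r = \E\binom{Z}{r}$ gives $\sum_{r\geq 0} m_r x^r = \E(1+x)^Z = \phi(1+x)$, the constants factor as $c_{j,a} = p_c^j [x^j](\phi(1+x)-1)^a$. Substituting $\ee \mapsto p_c \ee$ in the expansion $g(p_c+\ee)^j = \sum_d r_{j,d}\ee^d$ gives $g(p_c(1+\ee))^j = \sum_d p_c^d r_{j,d}\ee^d$, so the inner $d$-sum appearing on the left-hand side is the coefficient of $\ee^b$ in
$$g(p_c(1+\ee))^j(1+\ee)^j \;=\; p_c^{-j}\,B(\ee)^j, \qquad B(\ee) := p_c(1+\ee)\,g(p_c(1+\ee)).$$
Pulling out the coefficient extraction, the left-hand side of the lemma equals
$$[\ee^b]\,\sum_{j\geq 1}(-1)^{j-1}\bigl[x^j\bigr](\phi(1+x)-1)^a \cdot B(\ee)^j.$$

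Next I collapse the $j$-sum via the elementary formal identity $\sum_{j\geq 1}(-1)^{j-1}[x^j]f(x)\,y^j = f(0)-f(-y)$, applied with $f(x)=(\phi(1+x)-1)^a$. Since $f(0)=0$, this yields $-(\phi(1-B(\ee))-1)^a = (-1)^{a+1}(1-\phi(1-B(\ee)))^a$. Now comes the crucial step: by construction $B(\ee)=p g(p)$ at $p = p_c(1+\ee)$, so the fixed-point equation gives $1-\phi(1-B(\ee)) = g(p_c(1+\ee))$. Hence the $j$-sum equals $(-1)^{a+1} g(p_c(1+\ee))^a = (-1)^{a+1}\sum_d p_c^d r_{a,d}\ee^d$, and reading off $[\ee^b]$ produces exactly $(-1)^{a+1} p_c^b r_{a,b}$.

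The only bookkeeping is to confirm that the truncations $j,d\leq i$ in the statement lose nothing: a nonzero contribution requires $d\geq j$ (from $r_{j,d}\neq 0$), $j\geq a$ (from $c_{j,a}\neq 0$), and $b-d\geq 0$ (from the binomial), forcing $a\leq j\leq b\leq i$ and $j\leq d\leq b\leq i$. The moment hypothesis $\E Z^{i+1}<\infty$ guarantees via Proposition~\ref{pr:g-rec-rel} that every coefficient $r_{j,d}$ in use is well-defined. I do not foresee a substantive obstacle; the hard part is simply spotting the correct change of variable $\ee\mapsto p_c\ee$ that turns the double sum into a straightforward coefficient extraction in the generating function of $g^a$.
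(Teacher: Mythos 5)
Your proof is correct in substance and is essentially the paper's argument run in reverse: both rest on expanding the $a$-th power of the fixed-point identity $1-\phi(1-pg(p))=g(p)$ at $p=p_c+\ee$ and equating coefficients of $\ee^b$ (your observation $c_{j,a}=p_c^j\,[x^j]\,(\phi(1+x)-1)^a$ is exactly the paper's composition/multinomial expansion of the $a$-th power), with the substitution $\ee\mapsto p_c\ee$ being purely cosmetic. The one caveat is that under $\E[Z^{i+1}]<\infty$ neither $\phi(1+x)$ nor $g(p_c+\ee)$ is an honest formal power series (coefficients $m_r$ for $r>i+1$ may be infinite, and $g$ has only an order-$i$ asymptotic expansion), so your collapse $\sum_{j\ge 1}(-1)^{j-1}[x^j]f(x)\,B(\ee)^j=f(0)-f(-B(\ee))$ must be read modulo $o(\ee^b)$ and justified by Taylor's theorem with remainder at order at most $i+1$, exactly as the paper does; your truncation bookkeeping handles the finiteness of the sums but not this analytic point, which is, however, a routine patch rather than a substantive gap.
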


\begin{proof}
Begin as in the proof of Proposition \ref{pr:g-rec-rel} with the identity
$$\bigg[1 - \phi(1 - (p_c + \ee)g(p_c + \ee))\bigg]^a = g(p_c + \ee)^a \, .$$
The idea is to take Taylor expansions of both sides and equate coefficients 
of $\ee^b$; more technically, taking Taylor expansions of both sides up 
to terms of order $o(\ee^i)$ yield two polynomials in $\ee$ of degree $i$ 
whose difference is $o(\ee^i)$ thereby showing the two polynomials are equal.  
The coefficient $[\ee^b] g(p_c + \ee)^a$ of $\ee^b$ on the right-hand side 
is $r_{a,b}$, by definition.  On the left-hand-side, we write 
\begin{align*}
\bigg[1 - \phi(1 - (p_c + \ee)g(p_c + \ee))\bigg]^a 
& = \bigg[\sum_{k = 1}^i (-1)^{k+1}(1 + \ee/p_c)^k g(p_c + \ee)^k 
   p_c^k\frac{\phi^{(k)}(1)}{k!} + o(\ee^i)\bigg]^a \\
&=(-1)^a \sum_{j = 1}^i (-1)^{j}(1 + \ee/p_c)^j g(p_c + \ee)^j c_{j,a} 
   + o(\ee^i) \, .
\end{align*}
The coefficient of $\ee^b$ of $(1 + \ee/p_c)^j g(p_c + \ee)^j$ is 
\begin{align*}
[\ee^b] (1 + \ee/p_c)^j g(p_c + \ee)^j 
&= \sum_{ d = j}^b \left([\ee^d] g(p_c + \ee)^j\right) 
   \left([\ee^{b-d}](1 + \ee/p_c)^j \right) \\
& = \sum_{d = j}^b r_{j,d} \binom{j}{b-d} p_c^{-(b-d)} \, .
\end{align*}
Equating the coefficients of $\ee^b$ on both sides then gives 
$$(-1)^a \sum_{j = 1}^i (-1)^{j} c_{j,a} \sum_{d = j}^b r_{j,d} 
   \binom{j}{b - d} p_c^{-(b - d)} = r_{a,b} \, .$$
Multiplying by $p_c^b(-1)^{a+1}$ on both sides completes the proof.  
$\Cox$
\end{proof}

With Theorem~\ref{th:Y-jk-mart} and Lemma \ref{lem:constants-sum} in place, the limits of $M_n^{(i)}$ fall out easily.

\begin{lem}\label{lem:M_n-mart}
Suppose $\E[Z^{i+1}] < \infty$ for some $i$ and let $\beta > 0$ with 
$\E[Z^{i(1 + \beta)}] < \infty$.  Then 
\begin{enumerate}[$(a)$]
\item The sequence $(M_n^{(i)})_{n = 1}^\infty$ is a martingale 
with respect to the filtration $(\T_n)_{n=1}^\infty$. 
\item There exist positive constants $C, c$ depending only on 
$i, \beta$ and the progeny distribution so that 
$\Vert M_{n + 1}^{(i)} - M_n^{(i)}\Vert_{L^{1+\beta}} \leq C e^{- c n}.$
\item There exists a random variable $M^{(i)}$ so that $M_n^{(i)} \to M^{(i)}$ 
both almost surely and in $L^{1+\beta}$.
\end{enumerate}   
\end{lem}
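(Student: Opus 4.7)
The plan is to exploit the Doob decomposition $X_n^{(j,k)} = Y_n^{(j,k)} + A_n^{(j,k)}$ from Section~\ref{ss:martingales}. The construction of $M_n^{(i)}$ as a signed sum of $X_n^{(j,i-d)}$'s is engineered so that all predictable parts cancel, leaving only linear combinations of the $Y$-martingales; Lemma~\ref{lem:constants-sum} is the combinatorial fact that drives the cancellation, and Theorem~\ref{th:Y-jk-mart} controls the remaining martingale part.

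For part~$(a)$, I would compute
$$\E\bigl[M_{n+1}^{(i)} \bigm| \T_n\bigr] = \mu^i \sum_{j=1}^i (-1)^{j+1} \sum_{d=j}^i p_c^d r_{j,d}\, \E\bigl[X_{n+1}^{(j,i-d)} \bigm| \T_n\bigr],$$
substitute the identity $\E[X_{n+1}^{(j,k)} \mid \T_n] = \sum_{a=1}^j c_{j,a} \sum_{e=0}^k \binom{j}{k-e} X_n^{(a,e)}$ obtained from Lemma~\ref{lem:predictable}, and collect the coefficient of each $X_n^{(a,e)}$ on the right. After setting $b := i - e$ and using that $r_{j,d} = 0$ for $d < j$ and $c_{j,a} = 0$ for $j < a$ to extend the summation ranges harmlessly, that coefficient becomes exactly the left-hand side of Lemma~\ref{lem:constants-sum}, which the lemma identifies as $(-1)^{a+1} p_c^b r_{a,b}$. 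But this is precisely the coefficient of $X_n^{(a,e)}$ in $M_n^{(i)}$ (arising from the single term $j = a$, $d = b = i-e$), so $\E[M_{n+1}^{(i)} \mid \T_n] = M_n^{(i)}$.

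For part~$(b)$, applying the same cancellation to $\Delta X_{n+1}^{(j,i-d)} = \Delta Y_{n+1}^{(j,i-d)} + \Delta A_{n+1}^{(j,i-d)}$ inside $\Delta M_{n+1}^{(i)}$ gives
$$M_{n+1}^{(i)} - M_n^{(i)} = \mu^i \sum_{j=1}^i (-1)^{j+1} \sum_{d=j}^i p_c^d r_{j,d}\, \Delta Y_{n+1}^{(j,i-d)}.$$
Every index pair $(j, i-d)$ has $j \leq i$, so $\E[Z^{i(1+\beta)}] < \infty$ implies $\E[Z^{j(1+\beta)}] < \infty$, and Theorem~\ref{th:Y-jk-mart}$(a)$ yields $\lVert \Delta Y_{n+1}^{(j,i-d)} \rVert_{L^{1+\beta}} \leq C_{j,d}\, e^{-c_{j,d} n}$ for each summand. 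The triangle inequality over the finite double sum delivers the uniform exponential bound.

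For part~$(c)$, summability of $\sum_n \lVert M_{n+1}^{(i)} - M_n^{(i)} \rVert_{L^{1+\beta}}$ from $(b)$ makes $(M_n^{(i)})$ Cauchy in $L^{1+\beta}$, hence convergent to some $M^{(i)} \in L^{1+\beta}$; in particular $\sup_n \lVert M_n^{(i)} \rVert_{L^{1+\beta}} < \infty$. Combined with the martingale property from $(a)$, the $L^p$ martingale convergence theorem (with $p = 1+\beta > 1$) delivers almost-sure convergence. The main obstacle is really the bookkeeping in $(a)$: translating the iterated conditional expectation into exactly the sum appearing in Lemma~\ref{lem:constants-sum}, and verifying that the natural summation ranges agree modulo terms that vanish by definition of $r_{j,d}$ and $c_{j,a}$. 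Once the substitution $b = i-e$ is in place, parts $(b)$ and $(c)$ follow mechanically.
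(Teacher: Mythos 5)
Your proposal is correct and follows essentially the same route as the paper: the paper likewise substitutes the predictable part from Lemma~\ref{lem:predictable} into the increment $M_{n+1}^{(i)}-M_n^{(i)}$, uses Lemma~\ref{lem:constants-sum} (after the same reindexing, extending ranges via $r_{j,d}=0$ for $d<j$ and $c_{j,a}=0$ for $j<a$) to cancel it, and is left with a finite signed sum of $\Delta Y_{n+1}^{(j,i-d)}$ controlled by Theorem~\ref{th:Y-jk-mart}$(a)$, with $(c)$ following from the $L^{1+\beta}$ martingale convergence theorem. The only cosmetic difference is that you phrase $(a)$ directly via conditional expectations while the paper proves $(a)$ and $(b)$ simultaneously from the single increment identity.
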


\begin{proof}
Note first that $(c)$ follows from $(a)$ and $(b)$ by the triangle 
inequality together with the $L^p$ martingale convergence theorem.  
Parts $(a)$ and $(b)$ are proved simultaneously.  Write 
\begin{align}
\mu^{-i} \left ( M_{n+1}^{(i)} - M_n^{(i)} \right ) &= \sum_{j = 1}^i (-1)^{j+1}
   \sum_{d = j}^i p_c^d r_{j,d} \left(X_{n+1}^{(j,i-d)} - X_n^{(j,i-d)}\right) 
   \nonumber \\
& = \sum_{j = 1}^i (-1)^{j+1}\sum_{d = j}^i p_c^d r_{j,d} 
   \left(\Delta Y_{n+1}^{(j,i-d)} + \sum_{a = 1}^j c_{j,a} 
   \sum_{b = 0}^{i -d} \binom{j}{i-d-b} X_n^{(a,b)} - X_n^{(j,i-d)}\right) 
   \nonumber \\
& = \sum_{j = 1}^i \sum_{d =j }^i (-1)^{j+1} p_c^d r_{j,d} 
   \Delta Y_{n+1}^{(j,i-d)} \nonumber \\
& \qquad + \sum_{j = 1}^i \sum_{d = j}^i (-1)^{j+1}p_c^d r_{j,d} 
   \left(\sum_{a = 1}^j \sum_{b = 0}^{i - d} c_{j,a} 
   \binom{j}{i-d-b} X_n^{(a,b)} - X_n^{(j,i-d)}\right)\,. \label{eq:M_n-diff}
\end{align}
By Theorem \ref{th:Y-jk-mart}, we have that 
$\Delta Y_{n+1}^{(j,i-d)}$ is exponentially small in $L^{1+\beta}$.  
This means that we simply need to handle the second sum 
in~\eqref{eq:M_n-diff}.  We claim that it is identically equal to zero.   
This is equivalent to the claim that 
\begin{equation} 
\sum_{j = 1}^i \sum_{d = j}^i \sum_{a = 1}^j 
   \sum_{b = 0}^{i - d} (-1)^{j+1}p_c^d r_{j,d} 
   c_{j,a} \binom{j}{i-d-b} X_n^{(a,b)} 
= \sum_{a = 1}^i \sum_{b = a}^i (-1)^{a + 1}p_c^b r_{a,b}X_n^{(a,i-b)} 
   \, .  \label{eq:need} 
\end{equation}
	
To prove this, we rearrange the sums in the left-hand-side 
of~\eqref{eq:need}.  In order to handle the limits of each sum, 
we recall that $c_{j,a} = 0$ for $j < a$ and $r_{j,d} = 0$ for $d < j$.  
Relabeling and swapping gives 
\begin{eqnarray*}
\sum_{j = 1}^i \sum_{d = j}^i \sum_{a = 1}^j \sum_{b = 0}^{i - d} 
   (-1)^{j+1}p_c^d r_{j,d} c_{j,a} \binom{j}{i-d-b} X_n^{(a,b)}
& = & \sum_{j = 1}^i \sum_{d = j}^i \sum_{a = 1}^j \sum_{b = d}^{i} 
   (-1)^{j+1}p_c^d r_{j,d} c_{j,a} \binom{j}{b-d} X_n^{(a,i-b)} \\
& = & \sum_{a = 1}^i \sum_{b = a}^i  X_n^{(a,i-b)}
   \left( \sum_{d = 1}^i \sum_{j = 1}^i (-1)^{j-1} 
   p_c^d r_{j,d} c_{j,a} \binom{j}{b-d} \right) \, .
\end{eqnarray*}
Lemma \ref{lem:constants-sum} shows that the term in parentheses 
is equal to $(-1)^{a+1}p_c^b r_{a,b}$, thereby showing~\eqref{eq:need}. 
$\Cox$
\end{proof}

{\sc Proof of Theorem~\ref{th:g-expansion}:}  
Apply Lemma~\ref{lem:g-bonf} to obtain some $\delta > 0$ sufficiently 
small so that 
\begin{equation}\label{eq:g-M-n-exp}
g(\TT,p_c + \ee) = \sum_{i = 1}^\ell M_n^{(i)} \ee^i + o(\ee^\ell)
\end{equation} 
with $n = \lceil \ee^{-\delta}\rceil$.  The exponential convergence of 
$M_n^{(i)}$ from Lemma \ref{lem:M_n-mart} together with Markov's 
inequality and Borel-Cantelli shows that 
$$|M_n^{(i)} - M^{(i)}|n^{N} \to 0$$ 
almost surely for any fixed $N > 0$.  Because 
$n = \lceil \ee^{-\delta}\rceil$ implies $n^{-N} = o(\ee^\ell)$ 
for $N$ sufficiently large,~\eqref{eq:g-M-n-exp} can be simplified to 
$$g(\TT,p_c + \ee) = \sum_{i = 1}^\ell M^{(i)} \ee^i + o(\ee^\ell)\,.$$

It remains only to show that $\E M^{(i)} = r_i$.  Because 
$M_n^{(i)}$ converges in $L^{1+\beta}$, it also converges in $L^1$, 
implying $\E[M^{(i)}] = \E[M_1^{(i)}]$.  Noting that 
$\E[X_1^{(j,k)}] = \binom{j}{k}c_{j,1}$, we use 
Lemma~\ref{lem:constants-sum} with $a = 1$ and $b = i$ in the
penultimate line to obtain
\begin{eqnarray*}
p_c^i \E[M_1^{(i)}] + \sum_{j = 1}^i \sum_{d = j}^i (-1)^{j+1} 
   p_c^d r_{j,d} \E[X_1^{(j,i-d)}] 
& = & \sum_{j = 1}^i \sum_{d = j}^i (-1)^{j+1} p_c^d r_{j,d} 
   \binom{j}{i -d}c_{j,i} \\
& = & (-1)^{1 + 1}p_c^i r_{1,i} \\
& = & p_c^i r_i \, .
\end{eqnarray*}
$\Cox$

\section{Regularity on the Supercritical Region} \label{sec:cont diff}
	
In this section we prove Russo-type formulas expressing the 
derivates of $g(T,p)$ as expectations of quantities measuring
the number of pivotal bonds.  The first and simplest of these 
is Theorem~\ref{th:c1}, expressing $g'(T,p)$ as the 
expected number of pivotal bonds multiplied by $p^{-1}$.  
In Section~\ref{sec:c3} we define some combinatorial gadgets to 
express more general expectations (Definitions~\ref{def:monomials}) 
and show that these compute successive derivatives 
(Proposition~\ref{pr:JRformofderiv}).  
In Section~\ref{sec:cont-at-pc}, explicit estimates on these
expectations are given in Proposition~\ref{pr:k+1/k}, which under
suitable moment conditions lead to continuity of the first
$k$ derivatives at $p_c^+$, which is Theorem~\ref{th:contderivpc}.

\subsection{Continuous differentiability on $(p_c,1)$}\label{sec:g'}

Given $T$ and $p$, let $T_p = T_p (\omega)$ denote the tree obtained
from the $p$-percolation cluster at the root by removing all vertices
$v$ not connected to infinity in $T(v)$.  Formally, $v \in T_p$
if and only if $\rtt \conn_{T,p} v$ and $v \conn_{T(v),p} \infty$.
On the survival event $H_T(p)$ let $B_p$ denote the first node at which
$T_p$ branches.  Formally, the event $\{ B_p = v \}$ is the intersection
of three events $\open (v), \Nobranch (v)$ and $\branch (v)$ 
where $\open (v)$ is the event $\rtt \conn_{T,p} v$ of the path 
from the root to $v$ being open, $\Nobranch (v)$ is the event that 
for each ancestor $w < v$, no child of $w$ other than the one that is 
an ancestor of $v$ is in $T_p$, and $\branch (v)$ is the event that 
$v$ has at least two children in $T_p$. We call $|B_p|$ the 
{\em branching depth}.  The main result of this
subsection is the following.

\begin{thm} \label{th:c1}
The derivative of the quenched survival function is given by 
$g'(T,p) = p^{-1} \E_T |B_p|$, which is finite and continuous 
on $(p_c,1)$.
\end{thm}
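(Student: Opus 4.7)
The plan is to prove a Russo-type identity by coupling the $p$-percolations across parameters via the uniform variables $U_e$, and then to pair each pivotal edge with the corresponding spine vertex to extract the factor $p^{-1}$.

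First, I would exploit the monotone coupling: because $H_T(\cdot)$ is increasing in $p$,
\[
g(T,p+\delta) - g(T,p) = \P[H_T(p+\delta) \setminus H_T(p)].
\]
On this event some edge $e$ has $U_e \in (p,p+\delta]$ and its opening triggers the infinite path. The probability that two or more $U_e$'s fall in this interval is $O(\delta^2)$, giving
\[
g(T,p+\delta) - g(T,p) = \delta \sum_e \P_T[e \text{ pivotal for } H_T(p)] + O(\delta^2),
\]
where ``pivotal'' means that with the other edges fixed, toggling $e$ toggles $H_T(p)$; this depends only on $\omega_{-e}$ and is therefore independent of $U_e$. To justify the exchange of limit and infinite sum, I would first apply standard finite Russo to the truncations $A_n := \{\rtt \conn_{T,p} \text{level } n\}$, obtaining $\tfrac{d}{dp}\P_T[A_n] = \sum_e \P_T[e \text{ pivotal for } A_n]$, and pass $n\to\infty$ using monotone/dominated convergence against the tail bound from the last step.

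Next, I would compute $\P_T[e \text{ pivotal for } H_T(p)]$ explicitly for $e = (u,v)$ with $u$ the parent of $v$. Pivotality decomposes into three events on disjoint edge sets: (i) the path $\rtt \to u$ is entirely open (probability $p^{|v|-1}$); (ii) for every sibling $c$ along this backbone \emph{and} every sibling of $v$ at level $|v|$, the subtree $T(c)$ fails to $p$-percolate (probability $\prod_c(1-pg(T(c),p))$); and (iii) $T(v)$ does $p$-percolate (probability $g(T(v),p)$). Comparing this with the event that $v$ lies on the spine of $T_p$, the latter adds exactly the requirement that $e_v$ itself be open, contributing one extra factor $p$. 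Summing over non-root vertices, and using $|B_p| = \sum_{v \neq \rtt}\one[v \in \text{spine}]$,
\[
\E_T|B_p| = \sum_v \P_T[v \in \text{spine}] = p \sum_e \P_T[e \text{ pivotal}] = p\, g'(T,p),
\]
which rearranges to the claimed formula.

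For finiteness and continuity on $(p_c,1)$, I would observe that $\P_T[v \in \text{spine}]$ decays exponentially in $|v|$: each additional generation contributes a factor $p\prod_c(1-pg(T(c),p))$ that is strictly less than one (since on a $\GW$-a.s.\ supercritical tree the continuing subtrees have $g(T(c),p)>0$). This makes $\sum_v \P_T[v\in\text{spine}]$ absolutely convergent and locally bounded uniformly in $p$; each summand is continuous in $p$ by continuity of $g(T(c),\cdot)$ (via the implicit-function argument used in Corollary~\ref{cor:analytic} applied subtree-wise), so dominated convergence gives continuity of $\E_T|B_p|$, hence of $g'(T,p) = p^{-1}\E_T|B_p|$. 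The main obstacle is the rigorous passage from finite Russo for $A_n$ to the infinite event $H_T(p)$: one must show $\sum_e \P_T[e\text{ pivotal for }A_n] \to \sum_e \P_T[e\text{ pivotal for }H_T(p)]$ with uniform control in $n$, and the exponential tail bound from the last paragraph is precisely what makes that interchange of sum and limit legitimate.
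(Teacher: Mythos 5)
Your pivotality computation is correct and recovers exactly the paper's tail-sum identity: for $e_v=(u,v)$ one has $\P_T[e_v \text{ pivotal}] = p^{-1}\P_T(B_p\geq v)$, so your route (finite-level Russo for the truncated events $A_n$, then a limit) is a legitimate alternative to the paper's, which instead runs the coupled percolations as a Markov process in decreasing $p$ and reads $g'(T,p)$ off as the jump rate $p^{-1}\E_T|B_p|$ out of $H_T(p)$ via Lemmas~\ref{lem:p markov} and~\ref{lem:splitH}. The genuine gap is the estimate you lean on for everything else --- finiteness of $\E_T|B_p|$, the interchange of $\sum_e$ with $n\to\infty$, and continuity. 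You claim $\P_T[v\in\mathrm{spine}]$ decays exponentially in $|v|$ because each generation contributes a factor $p\prod_c(1-pg(T(c),p))<1$. That is a per-ray bound and it is not enough: the sum runs over the whole tree, the number of vertices at depth $n$ grows like $W\mu^n$, and $p\mu>1$ throughout the supercritical interval, so a rate that is merely ``strictly less than one'' per generation gives a divergent upper bound. Equivalently, by disjointness $\sum_{|v|=n}\P_T(B_p\geq v)=\P_T(|B_p|\geq n,\,H_T(p))$, and what you need is summable (the paper gets exponential) decay of this quantity for $\GW$-a.e.\ $T$, uniformly over compact $p$-intervals; the factors $g(T(c),p)$ are not bounded away from $0$ over the subtrees of a fixed tree, so no pointwise quenched argument of the kind you sketch produces a rate. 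The paper obtains it from the annealed identity $\P(|B_p|\geq n)=A_p^n$ with $A_p=p\phi'(1-pg(p))<1$ (Proposition~\ref{pr:T_p} and Lemma~\ref{lem:exp moment}), then transfers to the quenched setting by dominating $\P_T(B_p\geq v)\leq (1+r)^{|v|}(p')^{|v|}\P_T(\Nobranch(v,p'))$ for all $p\in(p',(1+r)p')$ and showing the $\GW$-expectation of this dominating series is finite when $(1+r)A_{p'}<1$; that single estimate yields a.s.\ finiteness and the uniform convergence needed both for continuity and for your passage from $A_n$ to $H_T(p)$, after covering $(p_c,1)$ by countably many such intervals. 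Without some version of it your proof does not close.

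Two smaller points. Continuity of each summand uses continuity of $p\mapsto g(T(c),p)$ for the subtrees, and Corollary~\ref{cor:analytic} does not give this: that argument is the implicit function theorem applied to the annealed fixed point $s=1-\phi(1-ps)$ and says nothing about a fixed tree; instead, once the dominated-sum estimate is in place, differentiability (hence continuity) of $g(T(c),\cdot)$ for the countably many subtrees follows from the theorem itself and can be fed back into the uniform-convergence argument. Also, the claim that two or more $U_e$ fall in $(p,p+\delta]$ with probability $O(\delta^2)$ is false on an infinite tree (it happens almost surely); your truncation avoids this, but only modulo the same missing tail bound.
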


We begin with two annealed results.  Although it may be obvious, we 
point out that the notation $\TT_p$ means to take the random tree $\TT$ 
defined by the $\deg_v$ variables and apply the random map 
$\TT\mapsto\TT_p$ defined by the $U_v$ variables (see 
Section~\ref{sec:bpintro} for relevant definitions).  
Recall that $g(p)$ denotes the annealed survival function.
The  following fact is elementary and follows from taking the thinned
o.g.f.\ $\psi (z,s) := \phi (1 - s (1-z))$, setting $s = p g(p)$ 
to obtain the survivor tree, then conditioning on being non-empty, i.e.,
$(\psi - \psi (0)) / (1 - \psi (0))$:

\begin{pr} \label{pr:T_p}
For any $p > p_c$ define an offspring generating function 
\begin{equation} \label{eq:phi_p}
\phi_p (z) := \frac{\phi (1 - p g(p) (1-z)) - \phi (1 - p g(p))}
   {g(p)} \, .
\end{equation}
Then the conditional distribution of $\TT_p$ given $H(p)$ is 
Galton-Watson with offspring generating function $\phi_p$, which
we will denote $\GW_p$.
$\Cox$
\end{pr}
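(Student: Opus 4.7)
The plan is to identify the offspring distribution at an arbitrary vertex of $\TT_p$ using only the branching property of $\TT$ and the product structure of the percolation variables $\{U_v\}$. The hint in the paper already points to the key computation: thin an offspring distribution at rate $pg(p)$, then condition on being nonzero.

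First I would fix a vertex $v$ that lies in the percolation cluster of the root and examine when a child $w$ of $v$ in $\TT$ lies in $\TT_p$. By definition this requires the edge $(v,w)$ to be open (probability $p$) and $\TT(w)$ to survive $p$-percolation from $w$ to infinity (probability $g(p)$, since $\TT(w)\sim\GW$ by the branching property). These two events are independent, and across the different children $w$ of $v$ they are jointly independent because the edge variables $U_w$ are i.i.d.\ and the subtrees $\{\TT(w)\}$ are conditionally i.i.d.\ given $\deg_v$. Consequently, the raw number of children of $v$ that end up in $\TT_p$ is obtained by thinning $\deg_v$ at rate $pg(p)$, and so has probability generating function
$$\psi(z) \;:=\; \phi\bigl(1 - pg(p)(1-z)\bigr).$$

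Next, I would observe that $v \in \TT_p$ is equivalent to $v$ having at least one child in $\TT_p$: indeed, $v \in \TT_p$ iff $v$ has an infinite open line of descent in $\TT(v)$, and such a line passes through some child of $v$ that itself has an infinite open line of descent. Thus the conditional offspring distribution of $v$ in $\TT_p$ (given $v \in \TT_p$) is the truncation of $\psi$ at zero, namely $(\psi(z)-\psi(0))/(1-\psi(0))$. The fixed-point identity $g(p) = 1 - \phi(1-pg(p))$ gives $1 - \psi(0) = g(p)$, reducing the truncated generating function to exactly the claimed $\phi_p(z)$. Applied to the root, this yields the root's offspring distribution in $\TT_p$ under $\P(\cdot\|H(p))$.

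Finally, to upgrade from the one-generation offspring calculation to the full Galton-Watson structure, I would invoke the branching property of $\TT$ together with independence of percolation across disjoint subtrees. Conditional on $\deg_v$ and the set of children of $v$ lying in $\TT_p$, the subtrees of $\TT_p$ rooted at those children are conditionally independent, because each is a measurable function of disjoint blocks of the i.i.d.\ variables $(\deg_w, U_w)$. Each such subtree is distributed as $\TT_p$ conditioned on containing its root, and the calculation above shows its offspring generating function at every vertex is again $\phi_p$. The main thing to verify — really the only nonroutine point — is this conditional independence after conditioning on the global event $H(p)$; but $H(p)$ factors through the children because it is equivalent to the union over children $w$ of the events that $\TT(w)$ survives $p$-percolation, so standard product-measure arguments apply.
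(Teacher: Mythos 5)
Your proposal is correct and takes essentially the same route as the paper, which dispatches this proposition in a single sentence: thin the offspring o.g.f.\ to $\psi(z)=\phi\bigl(1-pg(p)(1-z)\bigr)$ and condition on non-emptiness, using $1-\psi(0)=g(p)$ to get $(\psi(z)-\psi(0))/(1-\psi(0))=\phi_p(z)$. Your write-up merely fills in the recursive branching-property and conditional-independence details that the paper leaves implicit, and those details are argued correctly.
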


\begin{lem}[annealed branching depth has exponential moments]
\label{lem:exp moment}
Let
\begin{equation} \label{eq:A_p}
A_p = A_p (\phi) := \phi_p' (0)
\end{equation}
denote the probability under $\GW_p$ that the root has precisely 
one child.  Suppose $r > 0$ and $p > p_c$ satisfy $(1+r) A_p < 1$.  
Then $\E (1+r)^{|B_p|} < \infty$.
\end{lem}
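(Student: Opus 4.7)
The plan is to read off the distribution of $|B_p|$ from Proposition~\ref{pr:T_p} and then recognize the desired expectation as the probability generating function of a geometric random variable. The key observation is that the chain of vertices strictly above the branching node $B_p$ is, by definition, the initial segment of the tree $\TT_p$ in which every vertex has exactly one child; under $\GW_p$, the probability that a given vertex has exactly one child is precisely $\phi_p'(0) = A_p$ (using $\phi_p(0) = 0$, so that $p_1 = \phi_p'(0)$ in the offspring distribution).

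First, I would invoke Proposition~\ref{pr:T_p} to treat $\TT_p$ (conditional on survival $H(p)$) as a Galton-Watson tree with offspring generating function $\phi_p$, which has no extinction since $\phi_p(0) = 0$. Next, I would unpack the event $\{|B_p| \geq k\}$: it holds iff the root, its unique child, its unique grandchild, $\ldots$, its unique descendant at depth $k-1$ each have exactly one child in $\TT_p$. By the branching property, these $k$ independent events each have probability $A_p$, so $\P(|B_p| \geq k \mid H(p)) = A_p^k$ for all $k \geq 0$. In other words, $|B_p|$ is geometric with parameter $1 - A_p$ under the conditional measure.

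Finally, I would sum a geometric series:
$$\E\bigl[(1+r)^{|B_p|}\bigr] = \sum_{k=0}^\infty (1+r)^k (1 - A_p) A_p^k = \frac{1-A_p}{1-(1+r)A_p},$$
which is finite exactly under the hypothesis $(1+r)A_p < 1$. (If $\E$ refers to the unconditional expectation with $|B_p|$ set to any convention on $H(p)^c$, one multiplies by $\P(H(p)) = g(p) < \infty$, which does not change finiteness.)

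I do not expect any real obstacles here: once Proposition~\ref{pr:T_p} is available, the entire content of the lemma is that the branching depth, being the first success time in a sequence of i.i.d.\ Bernoulli trials with success probability $1 - A_p$, has a geometric distribution whose p.g.f.\ converges at $1+r$ precisely when $(1+r)A_p < 1$.
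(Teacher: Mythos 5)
Your proposal is correct and follows essentially the same route as the paper: apply Proposition~\ref{pr:T_p} to get $\P(|B_p|\geq n)=A_p^n$ and then sum the resulting geometric series, which converges precisely when $(1+r)A_p<1$. The extra detail you supply (the chain above $B_p$ consisting of single-child vertices, each with probability $A_p$ by the branching property) is exactly what the paper leaves implicit.
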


\noindent{\sc Proof:}
The result is equivalent to finiteness of $\sum_{n=1}^\infty 
(1+r)^n \P (|B_p| \geq n)$.  Proposition ~\ref{pr:T_p} implies that
$\P (|B_p| \geq n) = A_p^n$, showing $\E (1+r)^{|B_p|}$ to
be the sum of a convergent geometric series.
$\Cox$

Next we recast the $p$-indexed stochastic process 
$\{ T_p : p \in [0,1] \}$ as a Markov chain.
Define a filtration $\{ \G_p : 0 \leq p \leq 1 \}$ by 
$\G_p = \sigma (\T, \{ U_e \vee p \})$.  
Clearly if $p > p'$ then $\G_p \subseteq \G_{p'}$, thus $\{ \G_p \}$
is a filtration when $p$ decreases from~1 to~0.  Informally, 
$\G_p$ knows the tree, knows whether each edge $e$ is open 
at ``time'' $p$, and if not, ``remembers'' the time $U(e)$
when $e$ closed. 

\begin{lem} \label{lem:p markov}
Fix any tree $T$.  The edge processes $\{ \one_{U(e) \leq p} \}$
are independent left-continuous two-state continuous time Markov chains. 
They have initial state~1 when $p=1$ and terminal state~0 when $p=0$,
and they jump from~1 to~0 at rate $p^{-1}$.  The process $\{ T_p \}$ 
is a function of these and is also Markovian on $\{ \G_p \}$.
\end{lem}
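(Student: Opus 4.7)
The plan is to verify each property of the single-edge process directly from the construction of the $U(e)$ variables, and then lift the Markov property to $T_p$ by observing that $T_p$ is a deterministic function of $T$ together with the edge states at parameter $p$.

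First I would fix an edge $e$ and examine $X_p^e := \one_{U(e) \leq p}$ as $p$ decreases from $1$ to $0$. The map $p \mapsto X_p^e$ is the indicator of $[U(e), 1]$, so it is left-continuous in the decreasing-$p$ parameterization, takes only the values $\{0,1\}$, starts at $X_1^e = 1$, and ends at $X_0^e = 0$ almost surely. Independence across edges is immediate from independence of the $\{U(e)\}$.

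Next I would compute the jump rate. Conditional on $X_p^e = 1$, the variable $U(e)$ is uniform on $[0,p]$, so for $0 \leq q < p$,
\[
\P(X_q^e = 0 \mid X_p^e = 1) \;=\; \P(U(e) > q \mid U(e) \leq p) \;=\; \frac{p-q}{p},
\]
whose derivative at $q = p^-$ is $-p^{-1}$, giving the claimed instantaneous rate $p^{-1}$. The same computation shows that the conditional law of $\{X_q^e : q \leq p\}$ given $\G_p$ depends on $\G_p$ only through $X_p^e$: if $X_p^e = 0$, the value of $U(e)$ is already $\G_p$-measurable and the future of edge $e$ is deterministic; if $X_p^e = 1$, the residual law of $U(e)$ is uniform on $[0,p]$ and independent of the rest of $\G_p$.

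Finally, I would lift to $T_p$. By construction, $T_p$ depends only on $T$ and on the edge states $(X_p^e)_{e \in T}$: it is the connected component of $\rtt$ in the open subgraph, pruned to those vertices having infinite lines of descent in that subgraph. Hence $T_p$ is $\G_p$-measurable, and the future $\{T_q : q \leq p\}$ is a deterministic function of $T$ and the future edge states. Since the joint conditional law of those future edge states given $\G_p$ depends only on the current states $(X_p^e)_e$ (by independence across edges together with the single-edge computation), the same is true of $\{T_q : q \leq p\}$, which is the Markov property. The only delicate point, and the main thing to get right, is bookkeeping: one must verify that $\G_p = \sigma(\T, \{U_e \vee p\})$ genuinely reduces, for the purpose of predicting the future, to the current state $(X_p^e)_e$. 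This is exactly what the construction $U_e \vee p$ is designed to ensure---edges already closed at $p$ reveal their full closing time and no longer affect the future, while edges still open retain only the information $U(e) \leq p$, which is exactly $X_p^e$---so the ``work'' is essentially a careful unpacking of this definition.
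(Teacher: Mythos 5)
Your proposal is correct and follows essentially the same route as the paper: direct verification of the single-edge Markov chains and the rate $p^{-1}$ via the conditional law of $U(e)$ given $U(e)\leq p$, followed by lifting to $T_p$ as a function of the edge processes; your unpacking of $\G_p=\sigma(\T,\{U_e\vee p\})$ is just a more explicit version of what the paper treats as immediate.
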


\noindent{\sc Proof:} Independence and the Markov property for
$\{ \one_{U(E) \leq p} \}$ are immediate.  The jump rate is the
limit of $\ee^{-1} \P (U(e) \in (p-\ee,p) | U(e) < p)$ which is $p^{-1}$.
The Markov property for $T_p$ on $\{ \G_p \}$ is immediate because it is 
a function of Markov chains on $\{ \G_p \}$.
$\Cox$

\noindent
Next we define the quantity $\beta$ as 
$\beta:=\text{inf}\{p:T_p\ \text{is infinite}\}$.  Thus 
$g(T,p)=\P_T(\beta\leq p)$ and $g'(T,p)$ is the density, if it exists, 
of the $\P_T$-law of $\beta$.  Before proceeding to the proof of 
Theorem \ref{th:c1}, we will need to establish one additional lemma.

\begin{lem} \label{lem:splitH}
With probability $1$, at $p=\beta$ the root of $T_p$ is connected to 
infinity, $|B_p|<\infty$ (i.e. $T_p$ does branch somewhere for $p=\beta$),
and there is a vertex $v\leq B_p$ with $U_v=\beta$.  Consequently, the 
event $H(p)$ is, up to measure $0$, a disjoint union of the events 
$\{B_{\beta}=v\}\cap\{\beta\leq p\}$.
\end{lem}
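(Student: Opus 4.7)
The plan is to establish the three separate statements about $T_\beta$ in order, then read off the partition of $H(p)$.

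For the first claim that $T_\beta$ is infinite almost surely, I would apply a König's lemma argument to the nested family $\{T_p\}_{p > \beta}$. Each such $T_p$ is infinite and locally finite, hence has vertices at every finite level; the intersection of a countable decreasing sequence of non-empty finite level sets along $p_n \downarrow \beta$ is non-empty at every level, producing an infinite path from the root. Because the condition defining $T_p$ involves only inequalities $U_e \leq p$ together with the infinite-descent condition, this intersection in fact equals $T_\beta$ (using that the $U_v$ are a.s.\ distinct).

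For the third claim, with probability one the $U_v$ are distinct and $\beta$ coincides with $U_{v^*}$ for a unique critical vertex $v^*$, the one whose edge-opening enables root survival. The ancestral edges of $v^*$ all have $U_v < \beta$ (else $v^*$ would be disconnected from the root at parameter $\beta$), and $v^*$ itself must have infinite open descent using edges with $U_e < \beta$, for otherwise opening the single edge above $v^*$ at $p = \beta$ could not create root survival. I then argue that no strict ancestor of $v^*$ branches in $T_\beta$: any alternative child $w'$ of an ancestor $w$ of $v^*$ lying in $T_\beta$ would supply an infinite open path from $w'$ through edges with $U_e < \beta$ (since $v^*$ is the unique vertex with $U_v = \beta$ and is not below $w'$), giving root survival at some $p < \beta$ and contradicting the definition of $\beta$. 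Hence $B_\beta$ lies at or below $v^*$, so $v^* \leq B_\beta$.

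The second and main claim, $|B_\beta| < \infty$ a.s., reduces by the third claim to showing that the subtree $S(v^*) := T_\beta \cap T(v^*)$ rooted at $v^*$ branches somewhere. By the branching property, conditional on the ancestral structure of $v^*$ the subtree $T(v^*)$ is an independent copy of $\GW$ equipped with its own i.i.d.\ uniform marks $U_w$; conditionally on $\beta = p_0$, the vertex $v^*$ has infinite open descent at parameter $p_0$ in $T(v^*)$, and Proposition~\ref{pr:T_p} then identifies $S(v^*)$ as a $\GW_{p_0}$-tree. Since $\beta > p_c$ almost surely by Theorem~\ref{th:lyons}, one has $A_{p_0} = \phi'_{p_0}(0) < 1$, so under $\GW_{p_0}$ the tree is a single infinite line with probability zero. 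The main obstacle is that conditioning on $\beta = p_0$ is a null-set operation; this is handled either by a careful disintegration of the joint law of $(T, U, \beta)$ against the density of $\beta$, or by invoking the strong Markov property from Lemma~\ref{lem:p markov} at the stopping time $\beta$ for the filtration $\{\G_p\}$ indexed by decreasing $p$. The consequence then follows immediately: on the full-measure event $H(p) = \{\beta \leq p\}$ all three claims hold, $B_\beta$ is a well-defined vertex, and $H(p)$ decomposes up to a null set as the disjoint union $\bigsqcup_v (\{B_\beta = v\} \cap \{\beta \leq p\})$.
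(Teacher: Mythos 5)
Your compactness argument for the first claim is sound, and it is the same step the paper uses. The trouble begins with the third claim: you simply assert that $\beta$ coincides with $U_{v^*}$ for some vertex $v^*$. This is not a consequence of the labels being a.s.\ distinct; it is essentially part of what the lemma asserts. Nothing rules out, configuration by configuration, the scenario in which $T_\beta$ is an infinite ray whose edge labels increase to $\beta$ without attaining it: there survival holds at $\beta$ and at no $p<\beta$, yet no vertex has $U_v=\beta$ and $|B_\beta|=\infty$. Excluding exactly this scenario is the probabilistic heart of the lemma, and your later steps inherit the omission. Relatedly, you twice rely on the implication ``an infinite open path all of whose labels are $<\beta$ yields survival at some $p<\beta$''; this is false pointwise, since the supremum of infinitely many labels strictly below $\beta$ can equal $\beta$, so both the assertion that $v^*$ has infinite open descent with labels $<\beta$ and the assertion that no strict ancestor of $v^*$ branches in $T_\beta$ are unsupported as written.

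The second claim is where the argument would fail even granting attainment. Proposition~\ref{pr:T_p} concerns a fixed deterministic $p$; the event $\{\beta=p_0\}$ is a null event determined in part by the labels inside $T(v^*)$ (which vertex is critical, and at which level, depends on whether its own subtree survives at that level), so it cannot be quoted to identify the conditional law of $S(v^*)$. Neither of your proposed fixes closes this gap: a disintegration ``against the density of $\beta$'' presupposes regularity of the law of $\beta$ of the kind established later (Theorem~\ref{th:c1}, whose proof uses the present lemma), and conditioning on $\beta=p_0$ in fact size-biases the survivor tree by the number of pivotal edges, so the naive identification with a $\GW_{p_0}$ tree is not correct as stated; meanwhile the Markov property of Lemma~\ref{lem:p markov} at the (reverse-time) stopping time $\beta$ governs the evolution of the percolation at parameters below $\beta$, not the conditional law of the configuration at $\beta$, which is what you need. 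A correct implementation of your idea would decompose over fixed candidate vertices $v$ and disintegrate in the single uniform $U_v$, but that decomposition exhausts $H(p)$ only once one already knows $\beta\in\{U_v\}$ a.s.\ --- the unproven step above. The paper avoids all conditioning on $\{\beta=p_0\}$ by tracking the process $p\mapsto B_p$ as $p$ decreases: it can jump only to a descendant or to zero, the annealed jump rate is bounded on $[y,1]$ for each $y>p_c$ so jump times cannot accumulate above $p_c$, and accumulation at $p_c$ would force the null event $H(p_c)$ by compactness; finitely many jumps then give $|B_\beta|<\infty$ and exhibit the pivotal vertex with $U_v=\beta$ in one stroke. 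Some argument of this type is the missing ingredient in your proposal.
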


\noindent{\sc Proof:} As $p$ decreases from $1$ to $0$, the vertex $B_p$ 
can change only by jumping to a descendant or to zero.  Either zero is 
reached after finitely many jumps, in which case $B_{\beta}$ is the value 
just before jumping to zero, or there is a countable sequence of jumps.  
The decreasing limit of the jump times must be at least $p_c$ because 
below $p_c$ the value of $B_p$ is always zero.  The set of infinite paths,
 also known as $\partial T$, is compact, which means a decreasing 
intersection of closed subsets of $\partial T$ is non-empty.  It follows 
that if $\beta$ is the decreasing limit of jump times for $B_p$ then 
$H(\beta)$ occurs, that is $\rtt \conn\infty$ for $p=\beta$.  Because 
$g(p_c)=0$, we conclude the probability of a countable sequence of jump 
times decreasing to $p_c$ is zero.  To prove the lemma therefore, it 
suffices to rule out a sequence of jump times decreasing to some value 
$y>p_c$.  Now for the annealed process, $\{B_p:p>p_c\}$ is a 
time-inhomogeneous Markov chain, the jump rate depends only on $p$ and 
not $|B_p|$, and the jump rate is bounded for $p\in [y,1]$.  It follows 
that for almost surely every $T$, the probability of infinitely many 
jumps in $[y,1]$ is zero for any $y>p_c$.
$\Cox$

\noindent{\sc Proof of Theorem}~\ref{th:c1}:
By Lemma~\ref{lem:splitH} $H_T(p)$ is equal to the union of the disjoint 
events $\{B_{\beta}=v\}\cap H_T(p)$.  On
$\{ B_{\beta} = v \}$ the indicator $\one_{H(p)}$ jumps to zero precisely 
when $\open(v)$ does so, which occurs at rate $p^{-1} |v|$. 
%%%Note that a jump in $\branch (v)$ causes $B_p$ to jump but not $\one_{H(p)}$.
Because all jumps have the same sign, it now follows that 
$$\frac{d}{dp} \, g(T , p) 
   = \frac{1}{p} \sum_{v \in T} |v| \P (B_p = v)
   = \frac{1}{p} \E |B_p| \, ,$$
which may be $+\infty$.  Summing by parts, we also have
\begin{equation} \label{eq:tail sum}
\frac{d}{dp} \, g(T,p) = \frac{1}{p} \sum_{v\neq\rtt} \P (B_p \geq v)  
\end{equation}
where $B_p \geq v$ denotes $B_p = w$ for some descendant $w$ of $v$.

To see that this is finite and continuous on $(p_c,1)$, consider any
$p' > p_c$ and $r > 0$ with $(1+r) p' A_{p'} < 1$.  For any $p \in 
(p' , (1+r) p')$ we have 
$$\P_T(B_p \geq v) = \P_T(\open (v)) \P (\Nobranch (v,p))g(T(v),p) 
   \leq (1+r)^{|v|} (p')^{|v|} \P_T(\Nobranch (v,p')).$$

Taking the expectation of the expression on the right and multiplying by 
$g(p')$ we observe that 
\begin{align*}g(p')\E\Bigg[\sum_{v\in\TT}(1+r)^{|v|}(p')^{|v|}
\P_{\TT}(\Nobranch(v,p'))\Bigg]&=\E\Bigg[\sum_{v\in\TT}(1+r)^{|v|}
(p')^{|v|}\P_{\TT}(\Nobranch(v,p'))g(\TT(v),p')\Bigg] \\ 
&=\E\Bigg[\sum_{n=0}^{\infty} (1+r)^n\P(B_{p'}\geq n)\Bigg]<
\infty \end{align*}
where the last inequality follows from Lemma~\ref{lem:exp moment}.  This 
now implies that for $\GW$-almost every $T$, the right-hand-side of
 ~\eqref{eq:tail sum} 
converges uniformly for $p \in (p' , (1+r) p')$, thus implying 
continuity on this interval.  Covering $(p_c,1)$
by countably many intervals of the form $(p' , (1+r) p')$, the theorem
follows by countable additivity.
$\Cox$

\subsection{Smoothness of $g$ on the Supercritical Region}\label{sec:c3}
	
Building on the results from the previous subsection, we establish the
main result concerning the behavior of the quenched survival function 
in the supercritical region.
\begin{thm} \label{th:cinf}
For $\GW$-a.e. $T$, $g(T,p) \in C^{\infty}((p_c,1))$.
\end{thm}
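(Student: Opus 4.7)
The plan is to proceed by induction on $k$, showing that $g(T,\cdot) \in C^k((p_c,1))$ for $\GW$-a.e.\ $T$, with the base case $k=1$ being Theorem~\ref{th:c1}. The inductive step requires a combinatorial framework for higher derivatives, which the earlier discussion hints will take the form of a finite sum $\sum_\alpha \TFV_\alpha$ of expectations of factorial-moment statistics of the thinned tree $T_p$. So my first task is to set up the right class of ``gadgets'' $\TFV_\alpha$ closed under differentiation: these should be weighted sums over tuples $(v_1,\dots,v_m)$ of vertices in $T$ of quantities of the form $p^{a}\,\P_T[\open(v_1,\dots,v_m),\ \Nobranch\text{ or }\branch\text{ at prescribed vertices}]$ times polynomials in $|v_i|$ and combinatorial factors counting edges in $T(v_1,\dots,v_m)$. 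The output of Theorem~\ref{th:c1}, namely $p^{-1}\E_T|B_p|=p^{-1}\sum_v|v|\,\P_T(B_p=v)$, is the simplest such object.

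Next I would prove a Russo-type differentiation identity (this is what Lemma~\ref{lem:iterated} is destined to be): using the continuous-time Markov coupling from Lemma~\ref{lem:p markov}, differentiating $p\mapsto \P_T(\open(v_1,\dots,v_m)\cap\cdots)$ in $p$ produces a finite signed sum of similar probabilities, because each edge on the prescribed open/closed/branching pattern changes state at rate $p^{-1}$ and each transition either merges two gadgets or splits one. The class $\{\TFV_\alpha\}$ is chosen so that $\partial_p\TFV_\alpha$ lies in the span of finitely many $\TFV_\beta$'s, and the differentiation can be carried out termwise provided the resulting sum converges uniformly on compact subsets of $(p_c,1)$. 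Applying this identity inductively to $g'(T,p)=p^{-1}\E_T|B_p|$ yields $g^{(k)}(T,p)=\sum_\alpha \TFV_\alpha(T,p)$ with only finitely many summands at each $k$.

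The crux is controlling each $\TFV_\alpha$ uniformly in $p$ on compact subintervals $[p_1,p_2]\subset(p_c,1)$. Here I rely on the exponential moment bound of Lemma~\ref{lem:exp moment}: for any $p'>p_c$, picking $r>0$ with $(1+r)A_{p'}<1$ gives $\E_T (1+r)^{|B_{p'}|}<\infty$ for $\GW$-a.e.\ $T$ (after an annealed-to-quenched Borel--Cantelli argument like the one concluding the proof of Theorem~\ref{th:c1}). Each $\TFV_\alpha$ is dominated, up to polynomial factors in $|v_i|$ and a bounded power of $p$, by the expected number of $m$-tuples of vertices all lying on no-branch-then-branch paths from $\rtt$; these are in turn controlled by moments of $|B_p|$, which are finite by the exponential bound. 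The polynomial factors in $|v_i|$ are absorbed into the exponential by shrinking $1+r$, and uniform continuity in $p$ across $[p_1,p_2]$ follows by the same argument as in Theorem~\ref{th:c1}, working on intervals $(p',(1+r)p')$ covering $(p_c,1)$.

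The main obstacle will be bookkeeping: identifying a class of gadgets small enough that differentiation keeps it finite, yet large enough to accommodate all the terms produced by the Markov-chain transitions (open/closed flips that create new pivotal vertices and modify $\Nobranch$/$\branch$ labels). Once the class is fixed and the single-step differentiation formula is proved, the induction is straightforward: each derivative is a finite sum of $\TFV_\alpha$'s, each $\TFV_\alpha$ is continuously differentiable on $(p_c,1)$ by the dominated-convergence argument above, so $g^{(k+1)}(T,\cdot)$ exists and is continuous, completing the induction and proving $g(T,\cdot)\in C^\infty((p_c,1))$ for $\GW$-a.e.\ $T$.
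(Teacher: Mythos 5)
Your proposal follows essentially the same route as the paper: you define a class of combinatorial expectations closed under differentiation (the paper's monomial expectations $\TFV(T,F,\V,p)$ over collapsed trees), derive a Russo-type one-step differentiation formula from the rate-$p^{-1}$ Markov coupling of Lemma~\ref{lem:p markov} (the paper's Proposition~\ref{pr:JRformofderiv}), dominate everything by exponential moments of the collapsed-tree edge lengths via Lemma~\ref{lem:exp moment} on intervals $(p',(1+r)p')$ (the paper's Lemma~\ref{lem:R} and Lemma~\ref{lem:finexpmon}), and then iterate. The only difference is that the paper executes the bookkeeping you defer, by indexing the gadgets with collapsed trees $\V$ and monomials $F$ and writing the explicit derivative formula~\eqref{eq:explicit}; your plan is the same argument in outline.
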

	
In order to prove this result, we define quantities generalizing
the quantity $\E_T |B_p|$ and show that the derivative of a function
in this class remains in the class.  We begin by presenting several 
definitions.  Throughout the remainder of the paper, our trees are 
rooted and {\em ordered}, meaning that the children of each vertex 
have a specified order (usually referred to as left-to-right rather 
than smallest-to-greatest) and isomorphisms between trees are understood 
to preserve the root and the orderings.  

\begin{defns} \label{def:monomials}
~~\\[-6ex]
\begin{enumerate}[$(i)$]
\item {\bf Collapsed trees.}
Say the tree $\V$ is a collapsed tree if no vertex of
$\V$ except possibly the root has precisely one child.
\item {\bf Initial subtree.}  The tree $\Ttilde$ is said to be
an initial subtree of $T$ if it has the same root, and if for
every vertex $v \in \Ttilde \subseteq T$, either all children of
$v$ are in $\Ttilde$ or no children of $v$ are in $\Ttilde$, with
the added proviso that if $v$ has only one child in $T$ then it must 
also be in $\Ttilde$.
\item {\bf The collapsing map $\Phi$.}  
For any ordered tree $T$, let $\Phi(T)$ denote the isomorphism
class of ordered trees obtained by collapsing to a single edge
any path along which each vertex except possibly the last has 
only one child in $T$ (see figure below).
\item {\bf Notations $T(\V)$ and $\V \preceq T$.}  It follows from the
above definitions that any collapsed tree $\V$ is isomorphic to 
$\Phi (\Ttilde)$ for at most one initial tree $\Ttilde \subseteq T$.  
If there is one, we say that
$\V \preceq T$ and denote this subtree by $T(\V)$.  We will normally
use this for $T = T_p$.  For example, when $\V$ is the tree with one
edge then $\V \preceq T_p$ if and only if $T_p$ has precisely one
child of the root, in which case $T_p (\V)$ is the path from the root 
to $B_p$.
\item {\bf The embedding map $\Phiinv$.}  
If $e$ is an edge of $\V$ and $\V \preceq T$, let $\embed (e)$
denote the path in $T(\V)$ that collapses to the edge carried
to $e$ in the above isomorphism.  For a vertex $v \in \V$ 
let $\embed (v)$ denote the last vertex in the path $\embed (e)$
where $e$ is the edge between $e$ and its parent; if $v$ is the
root of $\V$ then by convention $\embed (v)$ is the root of $T$.
\item {\bf Edge weights.}  
If $\V \preceq T$ and $e \in E(\V)$, define $d(e) = d_{T,\V} (e)$ 
to be the length of the path $\embed (e)$.
\item {\bf Monomials.}  A monomial in (the edge weights of) a collapsed
tree $\V$ is a set of nonnegative integers $\{ F(e) : e \in E(\V) \}$ 
indexed by the edges of $\V$, identified with the product 
\begin{equation}\label{defmonomt}
\langle T, \V, F \rangle := 
   \begin{cases}
	\prod_{e \in E(\V)} d (e)^{F(e)} 
	& \text{ if } \V \preceq T \, , \\ 
   0 & \text{ otherwise. }
\end{cases} \,. \end{equation}  
A monomial $F$ is only defined in reference to a weighted collapsed 
tree $\V$.  As an example, if $T$ is as in Figure~\ref{Defining-Phi}
with $\V = \Phi(T)$, and if we take $F (e) = 1$ for the three edges 
down the left, $F (e) = 3$ for the rightmost edge and 
$F (e) = 0$ for the other four edges, then 
$\langle T, \V, F \rangle = 2 \cdot 3 \cdot 2 \cdot 2^3$.  
\end{enumerate}
\end{defns}

\begin{figure}[!ht]
	\centering
	\includegraphics{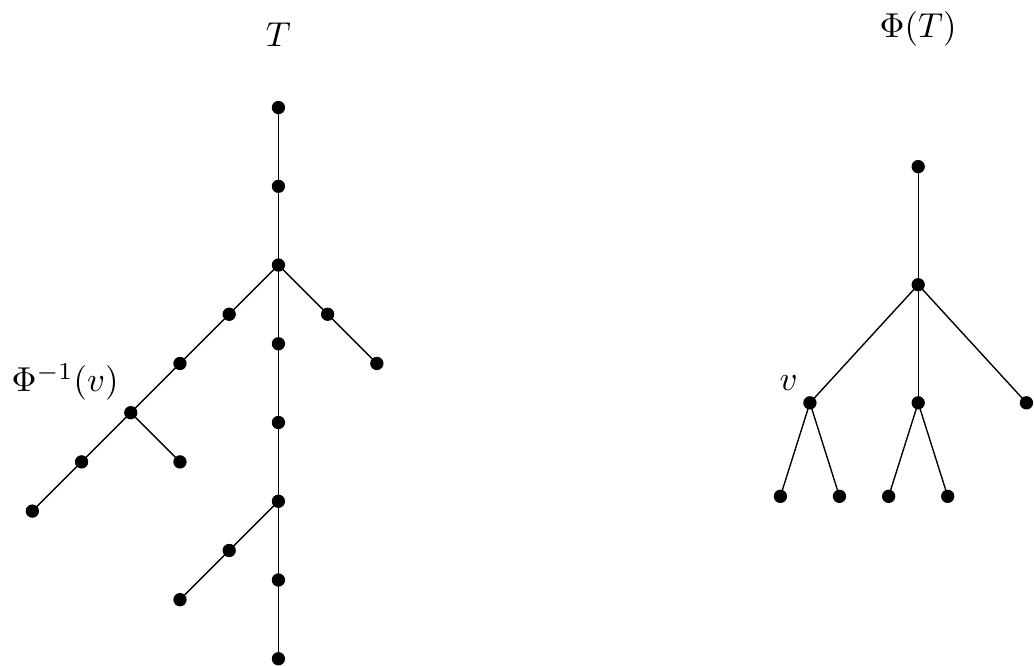}
	
	\caption{Illustrations showing how $\Phi$ acts on a tree $T$.}
	\label{Defining-Phi}
	
\end{figure}

\begin{defns}[monomial expectations]
Given $T, p$, a positive real number $r$, a finite collapsed tree 
$\V$, and a monomial $F$, define functions 
$\Trp = \Trp (T,r,\V,p)$ and $\TFV = \TFV (T,F,\V,p)$ by
\begin{eqnarray}
\Trp & := & \E_T \left[(1+r)^{|E (T_p (\V))|} \one_{\V \preceq T_p} \right]
   \label{eq:def E} \\
\TFV & := & \E_T \left[\langle T_p, \V, F \rangle \right]\,.\label{eq:def V}
\end{eqnarray}
\end{defns}
For example, if $\V_1$ is the tree with a single edge $e$ and 
$F_1 (e) = 1$, then $\langle T_p, \V_1 , F_1 \rangle = |B_p|$ and the 
conclusion of Theorem~\ref{th:c1} is that for $p > p_c$,
\begin{equation} \label{eq:N=1}
\frac{d}{dp} \, g(T,p) = \frac{1}{p}
\E_T |B_p| = \frac{1}{p}\TFV (T, F_1 , \V_1 , p) \, .
\end{equation}
The main result of this section, from which Theorem~\ref{th:cinf}
follows without too much further work, is the following representation.

\begin{pr}\label{pr:JRformofderiv}
Let $\V$ be a collapsed tree and let $F$ be a monomial in
the variables $d_{T,\V} (e)$.  Then there exists a collection 
of collapsed trees $\V_1,\dots,\V_m$ for some $m \geq 1$ and 
monomials $F_1, \ldots , F_m$, given explicitly 
in~\eqref{eq:explicit} below, such that 
\begin{equation}\label{rderivexpfm}
\frac{d}{dp} \E_T \langle T_p, \V, F \rangle = 
   \frac{1}{p} \sum_{i=1}^m \E_T \langle T_p, \V_i , F_i \rangle
\end{equation}
on $(p_c,1)$ and is finite and continuous on $(p_c , 1)$ for 
$\GW$-a.e. tree $T$.  Furthermore, each monomial $F_i$
on the right-hand side of~\eqref{rderivexpfm} satisfies $\deg (F_i)
= 1 + \deg (F)$ and each of the edge sets $E(\V_i)$ satisfies 
$|E(\V_i)| \leq 2 + |E(\V)|$.
\end{pr}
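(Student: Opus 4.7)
My approach follows the blueprint of Theorem~\ref{th:c1}: view $\{T_p\}$ as the backward-in-$p$ Markov chain provided by Lemma~\ref{lem:p markov}, in which each currently-open edge closes at rate $1/p$ as $p$ decreases, and compute $\frac{d}{dp}\E_T\langle T_p,\V,F\rangle$ by tallying the rates and magnitudes of the resulting jumps. I first write $\langle T_p,\V,F\rangle = \one_{\V\preceq T_p}\prod_{e\in E(\V)}d_{T_p,\V}(e)^{F(e)}$ and decompose $\one_{\V\preceq T_p}$ as a sum over candidate embeddings: for each initial subtree $\tilde T\subseteq T$ with $\Phi(\tilde T)=\V$, the contribution is the event that $\tilde T$ is precisely $T_p(\V)$, i.e.\ that every edge of $\tilde T$ is open, each leaf has infinite descent in $T\setminus\tilde T$, and each interior vertex of each path $\iota(e)$ has no surviving children outside $\tilde T$. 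This recasts the expectation as a sum over candidate embeddings, each weighted by the product of path-length powers.

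I would then differentiate this sum in $p$ by identifying the jump mechanisms of the Markov process. The relevant jumps split into three categories: an edge interior to some $\iota(e)$ closes (destroying $\V\preceq T_p$ via path breakage); a subtree just past a leaf of $T(\V)$ loses its last line of descent (again destroying the embedding); or a subtree attached to an interior vertex of some $\iota(e)$ comes alive and turns that vertex into a branching, promoting an edge of $\V$ into a small sub-structure with one or two new edges. By the rate-$1/p$ description from Lemma~\ref{lem:p markov}, each such mechanism contributes a term $(1/p)\cdot\E_T[\cdots]$. Summing such contributions over the positions along an embedded path $\iota(e)$ introduces an extra factor of $d(e)$, which raises the total degree of the resulting monomial by exactly one, giving $\deg(F_i)=1+\deg(F)$. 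Each structural modification either subdivides one edge of $\V$ at the position of the new branching and adds a new edge for the branch (adding two edges in total), or attaches a collapsed subtree of at most two edges at a leaf of $\V$, producing $|E(\V_i)|\leq 2+|E(\V)|$. The explicit formulas for the $(\V_i,F_i)$ pairs in~\eqref{eq:explicit} assemble from this case analysis.

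For finiteness and continuity on $(p_c,1)$, I would extend Lemma~\ref{lem:exp moment} from the single-edge case to arbitrary $\V$: for any $p'>p_c$ and $r>0$ with $(1+r)A_{p'}p'<1$, the generating-function quantity $\Trp(T,r,\V,p')$ is $\GW$-a.s.\ finite, and it dominates $|\TFV(T,F,\V,p)|$ on a neighborhood of $p'$ for $F$ of bounded degree, since the polynomial $\prod_e d(e)^{F(e)}$ is absorbed by the exponential factor $(1+r)^{|E(T_p(\V))|}$. Because only finitely many $(\V_i,F_i)$ appear on the right-hand side of~\eqref{rderivexpfm}, uniform absolute convergence on compact subsets of $(p_c,1)$ follows, giving continuity there.

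The main obstacle will be the combinatorial bookkeeping in the jump analysis: a single edge-closure event in $T$ can play several roles depending on whether it lies in an embedded path $\iota(e)$, in the neighborhood of a leaf of $T(\V)$, or in a subtree that newly branches away from $T(\V)$, and these cases must be handled without double counting. Equally delicate is the reorganization of the position sums along the paths $\iota(e)$ into clean monomials $F_i$ of degree $1+\deg(F)$, requiring careful tracking of which occurrences of $d(e)$ get promoted to $d(e)^{F(e)+1}$ and which spawn new variables on the subdivided edges.
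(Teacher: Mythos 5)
Your proposal is correct and follows essentially the same route as the paper: the same decomposition of $\E_T\langle T_p,\V,F\rangle$ over candidate embedded subtrees $\Ttilde$ via the events $\open$, $\Nobranch$, $\leafbranch$, the same backward-in-$p$ Markov-chain jump-rate analysis with the three mechanisms (edge closure along $\embed(e)$, loss of branching at a leaf, and removal of an extra branch along a path or at an interior vertex, yielding the $F_e$, $\V(v+2)$, $\V(e;L/R)$ and $\V_i(v+1)$ gadgets with the stated degree and edge-count bounds), and the same domination of monomial expectations by the exponential-moment quantity $\Trp$ (the paper's Lemma~\ref{lem:R} and Lemma~\ref{lem:finexpmon}) for finiteness and continuity. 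The only point treated more lightly than in the paper is the justification for differentiating the infinite sum over $\Ttilde$ term by term, which the paper handles with a dominated-convergence argument using the constant sign of each of the three families of derivative terms rather than uniform convergence, but this is a refinement within the same strategy rather than a different approach.
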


The content of this result is twofold: that the derivative of a 
random variable expressible in the form \eqref{rderivexpfm} is also 
expressible in this form, and that
all derivatives are continuous on $(p_c , 1)$.  The proof 
of~\eqref{rderivexpfm} takes up some space due to the bulky 
sums involved.  We begin with two finiteness results
that are the analogues for $\Trp$ and $\TFV$ of the exponential moments
of $B_p$ proved in Proposition~\ref{lem:exp moment}.  Recall the
notation $A_p$ for the probability of exactly one child of the root
having an infinite descendant tree given that at least one does.

\begin{lem} \label{lem:R}
If $\V$ is a collapsed tree, $p \in (p_c,1)$, and $r > 0$ with
$(1+r) A_p < 1$, then
$\E \Trp (\TT,r,\V,p) < \infty \, .$
\end{lem}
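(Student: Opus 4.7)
The strategy is to reduce the computation to one under $\GW_p$ and then decompose the event $\{\V \preceq \TT_p\}$ by the collapsed path lengths. By Proposition~\ref{pr:T_p}, conditional on survival, $\TT_p$ has law $\GW_p$; since the indicator $\one_{\V \preceq \TT_p}$ vanishes on $\{\TT_p = \emptyset\}$,
\[
\E\,\Trp(\TT,r,\V,p) \;=\; g(p)\cdot \E_{\GW_p}\!\bigl[(1+r)^{|E(\TT_p(\V))|}\one_{\V \preceq \TT_p}\bigr].
\]

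I would then partition the event $\{\V \preceq \TT_p\}$ according to the data $(d(e))_{e \in E(\V)}$ giving the length of each path in $\TT_p$ that collapses to an edge of $\V$. By the branching property of $\GW_p$, the probability of such a configuration factors into two kinds of contributions: (i) for each edge $e \in E(\V)$, a factor $A_p^{d(e)-1}$ from the $d(e)-1$ intermediate vertices of the associated path, each of which is required to have exactly one child in $\TT_p$; and (ii) for each vertex $v \in V(\V)$, a probability factor $C_v \le 1$ that the $\GW_p$-offspring count at the corresponding vertex in $\TT_p$ satisfies the branching condition forced by $\V$ (an exact count when $v$ is branching in $\V$, a count of at least $2$ when $v$ is a non-root leaf of $\V$). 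Since the subtrees rooted at distinct children of a branching vertex are independent under $\GW_p$, this yields the bound
\[
\E_{\GW_p}\!\bigl[(1+r)^{|E(\TT_p(\V))|}\one_{\V \preceq \TT_p}\bigr] \;\le\; \prod_{e \in E(\V)} \sum_{d \ge 1}(1+r)^d A_p^{d-1} \;=\; \left(\frac{1+r}{1-(1+r)A_p}\right)^{|E(\V)|},
\]
where each geometric sum converges precisely because of the hypothesis $(1+r)A_p < 1$. Combining the two displays gives $\E\,\Trp(\TT,r,\V,p) \le g(p)\bigl[(1+r)/(1-(1+r)A_p)\bigr]^{|E(\V)|} < \infty$.

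\textbf{Main obstacle.} The only delicate point is the bookkeeping behind the factorization (ii): one must carefully identify, for each vertex $v \in V(\V)$, the set of admissible offspring counts in $\TT_p$ (distinguishing root from non-root and branching vertices of $\V$ from leaves) and verify that the resulting $\GW_p$-probability is at most $1$, so that dropping these factors yields an upper bound. Once that is in place, independence of the subtrees under $\GW_p$ justifies the product form, and finiteness follows immediately from the single geometric-series convergence.
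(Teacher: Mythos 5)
Your proposal is correct and follows essentially the same route as the paper: both reduce to the $\GW_p$ description of $\TT_p$ via Proposition~\ref{pr:T_p}, exploit the fact that the collapsed edge lengths are independent geometric-type variables with failure probability $A_p$ while the degree constraints at the vertices of $\V$ contribute independent probability factors, and conclude by convergence of the geometric series under $(1+r)A_p<1$. The only (immaterial) difference is that the paper records the exact product formula~\eqref{eq:GW rep} and cites Lemma~\ref{lem:exp moment} for $\E(1+r)^G<\infty$, whereas you bound the vertex factors by $1$ and sum the series directly.
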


\begin{proof} 
Let $Z_p$ denote a variable distributed as the first generation 
of $\TT_p$ conditioned on $H(p)$, in other words, having PGF $\phi_p$.
The annealed collapsed tree $\Phi (\TT_p)$, with weights, has a simple 
description.  The weights $d(\parent(v),v)$ of the edges will be IID 
geometric variables with mean $1 / (1-A_p)$ because the length of the 
descendant path needed to reach a node with at least two children is geometric 
with success probability $1 - A_p$.  Let $G$ denote a geometric variable 
with this common distribution.  The number of children of each node other than 
the root will have offspring distribution $(Z_p | Z_p \geq 2)$, whereas
the offspring distribution at the root will be $Z_p$ because
the root is allowed to have only one child in $T_p$.

The cases where $\V$ is empty or a single vertex being trivial, we
assume that $\V$ has at least one edge.
The event $\V \preceq \TT_p$ is the intersection of the event $H(p)$
with the events $\deg_{\TT_p} (\embed (v)) = \deg_{\V} (v)$ as $v$ varies 
over the interior vertices of $\V$ including $\rtt$.  The branching structure 
makes these events (when conditioned on $H(p)$) independent with probabilities
$\P (Z_p = \deg (v) | Z_p \geq 2)$, except at the root where
one simply has $\P (Z_p=\deg(\rtt))$ (with $\deg (v)$ representing the number 
of children of $v$).  It follows from this and
the IID geometric edge weights that 
\begin{equation} \label{eq:GW rep}
\E \Trp (\TT, r, \V, p) = g(p) \left [ \E (1+r)^G \right ]^{|E(\V)|} 
   \P (Z_p = \deg (\rtt))
   \prod_v \P (Z_p = \deg (v) - 1 | Z_p \geq 2) \, ,
\end{equation}
where the product is over vertices $v$ in $\V \setminus 
(\partial \V \cup \{ \rtt \})$.
Finiteness of $\E \Trp (\TT, r, \V, p)$ then follows from finiteness of
$\E (1+r)^G$, which was Lemma~\ref{lem:exp moment}. $\Cox$
\end{proof}
\iffalse
\begin{cor}\label{cor:annealed}
If $\V$ is a collapsed tree and $F$ is a monomial, then the 
quantity $\E \TFV(T, F, \V, p)$ is finite and varies smoothly 
as a function of $p$ on $(p_c,1)$. 
\end{cor}

\begin{proof}
Corollary~\ref{cor:analytic} shows $g$ to be analytic on $(p_c,1)$.
It follows that $1 - A_p = 1 - \phi(1-g(p)) - g(p) \phi'(1 - g(p))$
is also analytic on $(p_c,1)$ and that the distribution of $Z$ also 
varies smoothly for $p \in (p_c , 1)$.  
The expected monomial products $\E \TFV (\TT , F, \V, p)$ satisfy
a formula analogous to~\eqref{eq:GW rep}, namely   
\begin{equation} \label{eq:monomials}
\E \TFV (\TT, F, \V, p) = g(p) 
\left [ \prod_{e \in E(\V)} \E G^{F(e)} \right ]
   \P (Z = \deg (\rtt))
   \prod_v \P (Z = \deg (v) - 1 | Z \geq 2) \, .
\end{equation}
Finiteness of the exponential moment in an interval containing $p$ 
implies that all moments of $G$ are finite and vary smoothly over $(p_c , 1)$.
The statement follows.
$\Cox$
\end{proof}
\fi

Before proceeding to give the proof of Proposition~\ref{pr:JRformofderiv},
we present one final result establishing finiteness and continuity of 
quenched monomial expectations.

\begin{lem}\label{lem:finexpmon}
If $\V$ is a collapsed tree and $F$ is a monomial, then for 
$\GW$-almost every $T$, the quantity $\TFV(T, F, \V, p)$ is 
finite and varies continuously as a function of $p$ on $(p_c,1)$. 
\end{lem}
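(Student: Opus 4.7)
The plan is to reduce both finiteness and continuity to the exponentially-weighted quantity $\Trp$ via the elementary pointwise bound
$$\TFV(T,F,\V,p)\leq C_{F,r}\,\Trp(T,r,\V,p),$$
valid for any $r>0$, with $C_{F,r}$ depending only on $F$ and $r$, and derived from $d^{F(e)}\leq C_{F,r}(1+r)^d$ applied edge-by-edge. For \emph{finiteness} at a given $p_0\in(p_c,1)$: since $A_p=p\phi'(1-pg(p))$ is continuous on $(p_c,1)$ (immediate from Corollary~\ref{cor:analytic}) and lies strictly below~$1$ there, I pick $r>0$ and a closed interval $I=[p_1,p_2]\subset(p_c,1)$ containing $p_0$ with $(1+r)A_p<1$ throughout $I$; Lemma~\ref{lem:R} then gives $\E_{\GW}\Trp(\TT,r,\V,p)<\infty$ for each $p\in I$, whence $\TFV(T,F,\V,p)<\infty$ holds $\GW$-a.s.\ at each fixed $p$.

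For \emph{continuity} at $p_0$, I would apply dominated convergence. For $\GW$-a.e.\ $T$ and $\P_T$-a.e.\ $\omega$, the map $p\mapsto\langle T_p,\V,F\rangle(\omega)$ is piecewise constant, jumping only at the random values $U_e$, so continuity at the deterministic point $p_0$ holds off the $\P_T$-null event $\{U_e=p_0\text{ for some }e\}$. The required $\P_T$-integrable envelope is
$$D(T,\omega):=\sup_{p\in I}\langle T_p,\V,F\rangle(\omega),$$
which I would bound via a union over all tree-theoretic embeddings $\embed:V(\V)\to V(T)$:
$$D(T,\omega)\leq \sum_\embed \prod_{e\in E(\V)} d_\embed(e)^{F(e)}\,\one_{\exists\,p\in I:\,E_\embed(p)},$$
where $E_\embed(p)=\{\V\preceq T_p\text{ with embedding }\embed\}$. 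The event $\{\exists\,p\in I:E_\embed(p)\}$ is contained in the intersection of the event that every embedded path-edge is open at $p_2$ (probability $p_2^{\sum_e d_\embed(e)}$) with the non-branching constraints at intermediate path vertices. Taking $\P_T$-expectation and then $\E_\GW$, applying $d^{F(e)}\leq C_{F,r}(1+r)^d$, and reusing the Galton-Watson bookkeeping behind~\eqref{eq:GW rep}, one obtains $\E_\GW \E_T D<\infty$, and hence $\E_T D<\infty$ $\GW$-a.s. Dominated convergence then yields continuity at $p_0$; finiteness for \emph{every} $p\in(p_c,1)$ follows from continuity combined with finiteness at a countable dense subset.

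The principal obstacle is the integrability of $D$: neither $\V\preceq T_p$ nor $|E(T_p(\V))|$ is monotone in $p$, so one cannot merely evaluate at an endpoint of $I$ and invoke a fixed-embedding bound. Controlling the supremum forces the sum over all potential embeddings of $\V$ into $T$, and the resulting series remains integrable only because each intermediate vertex along an embedded path contributes a ``no-branch'' factor whose uniform-in-$p\in I$ bound reduces, via the choice $(1+r)A_p<1$, to the same geometric tail estimate underlying Lemma~\ref{lem:exp moment}. Once this bookkeeping is discharged, the interchange of limit and expectation is routine.
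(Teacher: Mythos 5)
Your overall strategy for the estimate --- dominate the monomial by an exponential via $d^{F(e)}\leq C_{F,r}(1+r)^{d}$, evaluate the monotone pieces at the two endpoints of a short interval, and control the resulting sum over embeddings by the annealed computation behind Lemma~\ref{lem:R} --- is essentially the estimate the paper uses: there, each summand of~\eqref{eq:D sum} is bounded on an interval $(p',(1+r)p')$ with the open factor taken at the right endpoint and the no-branch factor at the left, and Lemma~\ref{lem:R} (with parameter $2r+r^2$) gives local uniform convergence of a series of continuous functions. Where you diverge is the continuity step: the paper never needs sample-path continuity of $p\mapsto\langle T_p,\V,F\rangle$, whereas your dominated-convergence route does, and this is where there is a genuine gap. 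That map does not jump only at the values $U_e$: membership of a vertex $v$ in $T_p$ also requires $v\conn_{T(v),p}\infty$, and this indicator jumps at the survival threshold $\beta_v:=\inf\{p: v\conn_{T(v),p}\infty\}$ of the subtree $T(v)$, which is determined by infinitely many $U$'s and is in general equal to no single $U_e$ (for the root, $\P_T(\beta\le p)=g(T,p)$, a nondegenerate law). So excluding $\{U_e=p_0 \text{ for some } e\}$ does not rule out a discontinuity of the integrand at $p_0$, and the a.s.-convergence hypothesis of dominated convergence is unjustified as written. The repair exists but must be stated: by Theorem~\ref{th:c1}, $g(T(v),\cdot)$ is continuous on $(p_c,1)$ for $\GW$-a.e.\ $T$ simultaneously for all of the countably many $v$, so each $\beta_v$ has an atomless law on $(p_c,1)$ and $\P_T(\beta_v=p_0 \text{ for some } v)=0$; only then is the integrand a.s.\ continuous at the deterministic $p_0$.

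Two further points in the envelope bound need care. First, in the containment of $\{\exists\,p\in I: E_\embed(p)\}$ you must take the non-branching constraints at the \emph{left} endpoint $p_1$ (no-branch is decreasing in $p$, openness increasing); taken at $p_2$ the containment fails, and your write-up leaves the parameter unspecified. Second, the resulting mixed-parameter expectation is not literally~\eqref{eq:GW rep}: the per-edge factor is of the order $(1+r)(p_2/p_1)A_{p_1}$, so in addition to $(1+r)A_p<1$ you must shrink $I$ so that the ratio $p_2/p_1$ is absorbed --- exactly the bookkeeping the paper performs for the two-parameter bound $\Psi$ in Proposition~\ref{pr:k+1/k}, and, in the proof of the present lemma, via the cover by intervals $(p',(1+r)p')$ together with Lemma~\ref{lem:R} at parameter $2r+r^2$. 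Finally, to obtain one $\GW$-null set working for every $p_0$, the intervals $I$ should be drawn from a countable family (say with rational endpoints). With these repairs your dominated-convergence variant does go through, but as submitted the continuity step rests on an incorrect description of where the sample paths jump.
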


\noindent{\sc Proof:}  
For finiteness and continuity, as well as for computing the explicit 
representation in~\eqref{eq:explicit} below, we need to decompose the 
monomial expectation according to the identity of the subtree 
$T_p (\V)$.  Observing that $\langle T_p, \V, F \rangle$ is constant 
on the event $\{ T_p (\V) = \Ttilde \}$, we denote this common value 
by $F(\V , \Ttilde)$.  Generalizing the decomposition of $H(p)$ in 
Section~\ref{sec:g'}, the event $\{T_p (\V) = \Ttilde \}$ is equal 
to the intersection of three independent events, $\open, \Nobranch$ 
and $\leafbranch$, depending on $T, \V$ and the parameters $\Ttilde$ 
and $p$, where:
\begin{quote}
$\open = \bigcap_{v \in \Ttilde} \open (v)$ is the event that 
all of $\Ttilde$ is open under $p$-percolation; \\

$\Nobranch = \bigcap_{v \in \V \setminus \partial \V} \Nobranch (v)$ 
is the event that no interior vertex $v \in \Ttilde$, has a child
not in $\Ttilde$; \\

$\leafbranch = \bigcap_{v \in \partial \B} \branch (v)$ is the event 
that the $p$-percolation cluster branches at every leaf of $\Ttilde$.
\end{quote}

Letting $\subt(T,\V)$ denote the set of subtrees $\Ttilde \subseteq T$ 
for which $\Ttilde$ has the same root as $T$, $\Phi(\Ttilde)$ is 
isomorphic to $\V$, and which satisfy the property that if $v\in\Ttilde$
has only one child in $T$ then that child is also in $\Ttilde$, we may write
$$\langle T_p, \V, F \rangle = \sum_{\Ttilde \in \subt (T,\V)} F(\V, \Ttilde)
   \one_{T_p (\V) = \Ttilde} \,$$
(note that $\Ttilde$ does not need to be an initial subtree of $T$, since
we are not making the assumption that all vertices in $T$ are either in 
$\Ttilde$ or are descendants of vertices in $\Ttilde$).  Taking expectations
now yields $$\TFV (T, F, \V, p) =  
\sum_{\Ttilde \in \subt (T,\V)} \TFV |_{\Ttilde}$$
where
\begin{eqnarray} \label{eq:D sum}
\TFV |_{\Ttilde} & := & \P (T_p (\V) = \Ttilde) F(\V , \Ttilde)  \\
& = & \P_T (\open) \P_T (\Nobranch) \P_T (\leafbranch) F(\V , \Ttilde) 
   \nonumber \\
& = & 
   p^{|E(\Ttilde)|}  
   \prod\limits_{\substack{w \notin \Ttilde \\ \parent(w) \in \Ttilde
   \setminus \partial \Ttilde}} (1 - p g(T(w),p))  
   \prod_{v \in \partial \Ttilde} g_2 (T(v) , p) F(\V , \Ttilde) 
   \nonumber \,
\end{eqnarray}

with $g_2(T,p)$ denoting the probability $T_p$ branches at the root.  
Each summand $\TFV |_{\Ttilde}$ in~\eqref{eq:D sum} is continuous in 
$p$, so it suffices to show that for $\GW$-almost every $T$, 
the sum converges uniformly on any compact subinterval 
$[a,b] \subseteq (p_c , 1)$.  For any $r > 0$ there are 
only finitely many $\Ttilde \in \subt(T,\V)$ for which 
$F(\V , \Ttilde) > (1+r)^{|E(\Ttilde)|}$.  We may therefore choose 
$C_r$ such that $F (\V , \Ttilde) \leq C_r (1+r)^{|E(\Ttilde)|}$
for all $\Ttilde$.  The result now follows similarly to as in the proof of
Theorem~\ref{th:c1}.  Cover $(p_c , 1)$ by countably many
intervals $(p' , (1+r)p')$ on which 
\begin{align}\label{eq:DTB}\TFV |_{\Ttilde} &\leq 
\Big(C_r (1+r)^{|E(\Ttilde)|}\Big)(p')^{|E(\Ttilde)|} 
(1+r)^{|E(\Ttilde)|}\prod\limits_{\substack{w \notin \Ttilde \\ 
\parent(w) \in \Ttilde\setminus \partial \Ttilde}} 
(1 - p' g(T(w),p')) \\ &=C_r(p')^{|E(\Ttilde)|} (1+2r+r^2)^{|E(\Ttilde)|}
\prod\limits_{\substack{w \notin \Ttilde \\ \parent(w) \in \Ttilde
\setminus \partial \Ttilde}} (1 - p' g(T(w),p')). \nonumber
\end{align}
Next note that \begin{align*}\E\Trp(\TT,2r+r^2,\V,p')
&=\E\Bigg[\sum(p')^{|E(\Ttilde)|}(1+2r+r^2)^{|E(\Ttilde)|}
\prod\limits_{\substack{w \notin \Ttilde \\ \parent(w) \in \Ttilde
\setminus \partial \Ttilde}} (1 - p' g(T(w),p'))  
\prod_{v \in \partial \Ttilde} g_2 (T(v) , p')\Bigg] \\ 
&=\Big(g_2 (p')\Big)^{|\partial\V|}
\E\Bigg[\sum(p')^{|E(\Ttilde)|}(1+2r+r^2)^{|E(\Ttilde)|}
\prod\limits_{\substack{w \notin \Ttilde \\ \parent(w) \in \Ttilde
\setminus \partial \Ttilde}} (1 - p' g(T(w),p'))\Bigg]\end{align*}
where the two sums are taken over all $\Ttilde\in\subt(\TT,\V)$.  
By Lemma~\ref{lem:R}, we know that if $(1+2r+r^2)A_{p'}<1$, then the 
above expectations are all finite.  It then follows that the sum over 
$\subt(T,\V)$ of the expression on the bottom in \eqref{eq:DTB} is 
finite for $\GW$-almost every $T$, thus implying uniform convergence 
of the sum of the  $\TFV|_{\Ttilde}$ expressions on $(p',(1+r)p')$.  
Now covering $(p_c,1)$ by a countable collection of such intervals, 
we see that $\TFV(T, F, \V, p)$ is both finite and continuous 
$\GW$-almost surely, thus completing the proof.
$\Cox$

\noindent{\sc Proof of Proposition}~\ref{pr:JRformofderiv}: ~~\\[2ex]
\ul{Step 1: Notation to state precise conclusion.}
Given a finite collapsed tree $\V$ and a monomial $F$, 
we define several perturbations of $\V$ and corresponding 
perturbations of $F$ and $\TFV$.
\begin{enumerate}[$(i)$]
\item Let $v$ be an interior vertex of $\V$, for which $L\geq 0$ of 
its children are also interior vertices.  Define $\V_i(v+1)$ 
(for $0\leq i\leq L$) to be $\V$ with a child $v_*$ added to $v$ via 
an edge $e_*$ placed anywhere between the $i$th and $(i+1)$th of the 
$L$ children of $v$ that are interior vertices (where we are counting 
from left to right).  Define the monomial $F^{(i)}_{v+1}$ by 
$F^{(i)}_{v+1} (e_*) = 1$ and $F^{(i)}_{v+1} (e) = F(e)$
for $e \neq e_*$.  Define $$\TFV_{v+1} (T, F, \V, p)  
:= \sum_{i=1}^L\TFV (T , F^{(i)}_{v+1} , \V_i(v+1) , p) \, .$$
\item
If $v$ is a vertex in $\partial \V$, i.e. a leaf, let $\V (v+2)$ 
denote the result of adding two children $v_0$ and $v_1$ to $v$ connected
by edges $e_0$ and $e_1$ respectively.  For $j = 0, 1$, define a monomial
$F_j$ on the edges of $\V(v+2)$ by $F_j = F$ on edges of $\V$, 
$F_j = 1$ on $e_j$ and $F_j = 0$ on $e_{1-j}$.  Define 
$$\TFV_{v+2} (T, F, \V , p) := 
   \TFV (T, F_0 , \V(v+2), p) + \TFV (T, F_1, \V(v+2) , p) 
   \, .$$
\item If $e$ is any edge in $E(\V)$, define two collapsed trees
$\V(e;L)$ and $\V(e;R)$ by subdividing $e$ at a new midpoint $m$
into edges $e_0$ and $e_1$, then adding a new child of $m$ with an edge
$e_*$ going to the left of $e_1$ in $\V(e;L)$ and to the right of $e_1$ 
in $\V(e;R)$.  
For $j = 0, 1$, define monomials $F_{j,L}$ on the edges of $\V(e;L)$
by $F_{j,L} (e') = F(e')$ for $e' \in E(\V)$ not equal to $e$, 
$F_{j,L} (e_j) = 1$, $F_{j,L} (e_{1-j}) = 0$, and $F_{j,L} (e_*) = 1$.
Define $F_{j,R}$ analogously on the edges of $\V(e;R)$ and define
$$\TFV_{e+2} (T, F, \V, p) := \sum_{j=0}^1 \sum_{A \in \{L,R\}} 
   \TFV (T, F_{j,A} , \V(e;A), p) \, .$$
\item Finally, for any edge $e \in E(\V)$, we define $F_e$ by
$F_e (e') = F(e')$ when $e' \neq e$ and $F_e (e) = F(e) + 1$.
In other words, the power of $d_T (e)$ in the monomial is bumped up
by one and nothing else changes.  Define
$$\TFV_{e} (T, F, \V, p) := \TFV (T, F_e, \V, p) \, .$$
\end{enumerate}

The explicit version of~\eqref{rderivexpfm} that we will prove is
\begin{eqnarray} 
\frac{d}{dp} \TFV (T , F, \V, p) & = & \frac{1}{p} \left [ 
   \sum_{e \in E(\V)} \TFV_{e} (T, F, \V, p) + 
   \sum_{v \in \partial \V} \TFV_{v+2} ( T, F, \V, p ) 
   \right. \nonumber \\
&& - \left. \left ( \sum_{e \in E(\V)} \TFV_{e+2} (T, F, \V , p ) 
   + \sum_{v \in \V \setminus \partial \V} 
     \TFV_{v+1} (T, F , \V , p ) \right )
   \right ] \label{eq:explicit}
\end{eqnarray}

\ul{Step 2: Convergence argument.}
The proof of~\eqref{eq:explicit} relies on the following fact
allowing us to interchange a derivative with an infinite sum.
If $B = \sum_{n=1}^\infty B_n$ converges everywhere on $(a,b)$
and $B_n$ are differentiable with derivatives $b_n$ such that 
$\sum_{n=1}^\infty b_n$ is continuous and $\sum_{n=1}^\infty |b_n (x)|$
converges to a function which is integrable 
on any interval $(r,s)$ for $a<r<s<b$, then $B$ is differentiable on $(a,b)$
and has derivative $\sum_{n=1}^\infty b_n$.  This follows, for example,
by applying the dominated convergence theorem to show that $\int_c^x 
\sum_{n=1}^M b_n (t) \, dt$ converges to $B(x) - B(c)$ for $a < c < x < b$.
The point of arguing this way is that it is good enough to have convergence 
of $\sum b_n$  to a continuous function and $\sum |b_n|$ 
to an integrable function; we do not need to keep demonstrating 
uniform convergence.

\ul{Step 3: Differentiating $\TFV |_{\Ttilde}$.}
We will apply this fact to~\eqref{eq:D sum} with $B = \TFV (T, F, \V, p)$,
and $\{ B_n \} = \{ \TFV |_{\Ttilde} \}$.  Each component function 
$\TFV |_{\Ttilde}$ is continuously differentiable on $(p_c,1)$ because 
$g$ and $g_2$ themselves are.  Differentiating $\TFV |_{\Ttilde}$ via the
product rule yields three terms, call them $b_{\Ttilde , j}$ for 
$1 \leq j \leq 3$:  
\begin{eqnarray*}
\frac{d}{dp} \TFV |_{\Ttilde}
& = & b_{\Ttilde , 1} + b_{\Ttilde , 2} + b_{\Ttilde , 3} \\
& = & \left [ \frac{d}{dp} \P_T(\open) \right ] 
   \P_T (\Nobranch) \P_T (\leafbranch) F(\V, \Ttilde) \\
&& + \P_T(\open) \left [ \frac{d}{dp} \P_T (\Nobranch)  \right ]
   \P_T (\leafbranch) F(\V, \Ttilde) \\
&& + \P_T(\open) \P_T (\Nobranch) \left [ \frac{d}{dp} \P_T (\leafbranch) 
   \right ] F(\V, \Ttilde) \, .
\end{eqnarray*}

The three sums $b_j := \sum_{\Ttilde \in \subt (T,\V)} b_{\Ttilde , j}$
will be identified as the three terms on the right-hand side 
of~\eqref{eq:explicit}, each of which is known, by Lemma~\ref{lem:finexpmon},
to be finite and continuous on $(p_c,1)$.  All the terms in
$\sum_{\Ttilde} b_{\Ttilde , j}$ have the same sign, namely 
positive for $j=1, 2$ and negative for $j=3$.  Therefore their absolute 
values sum to an integrable function.  It follows 
that $\TFV$ is continuously differentiable on $(p_c,1)$ with the 
explicit derivative given by~\eqref{eq:explicit}.  It remains only to 
identify the sums $b_1$, $b_2$ and $b_3$.

\ul{Step 4: Identifying jump rates.}
Recall from Lemma~\ref{lem:p markov} the Markov chain $\{ T_p : 
1 \geq p \geq 0 \}$.  This chain enters the set $\{ T(\V) = \Ttilde) \}$, 
which is equal to $\open \cap \Nobranch \cap \leafbranch$, exactly
when the function $\Nobranch$, which is decreasing in $p$, jumps
from~0 to~1 and the chain is already in the set $\open \cap \leafbranch$.
The chain leaves the set $\{ T(\V) = \Ttilde \}$ exactly when 
it is in the set and either $\open$ or $\leafbranch$ jumps 
from~1 to~0.  The three terms in the product rule correspond
to these three possibilities.  Figure~\ref{fig:jump} shows an example
of this: $\V$ is the illustrated 5-edge tree; $T_p (\V)$ and $T_{p+h} (\V)$
are respectively $\Ttilde$ and $\Ttilde'$; the jump into $\Ttilde$
as $p$ decreases occurs because $\Nobranch$ at the parent of $u$ flips 
from~0 to~1, while the jump out of $\Ttilde'$ is attributed to $\leafbranch$
flipping from~1 to~0 at the right child of $u$; the edge weight on the 
edge between $u$ and its parent also changes.

\begin{figure}[!ht] \label{fig:jump}
\centering
\subfigure[at parameter $p$] {
\includegraphics[width=2.7in]{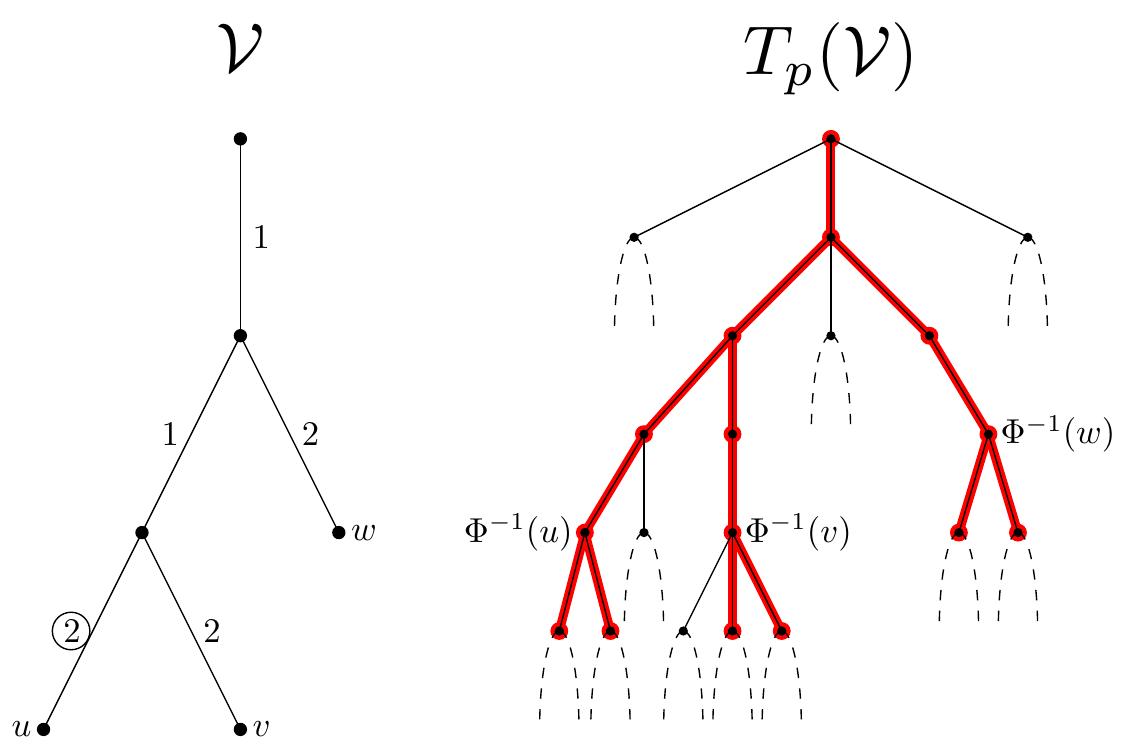}}
\subfigure[at parameter $p+h$] {
\hspace{0.5in}
\includegraphics[width=2.7in]{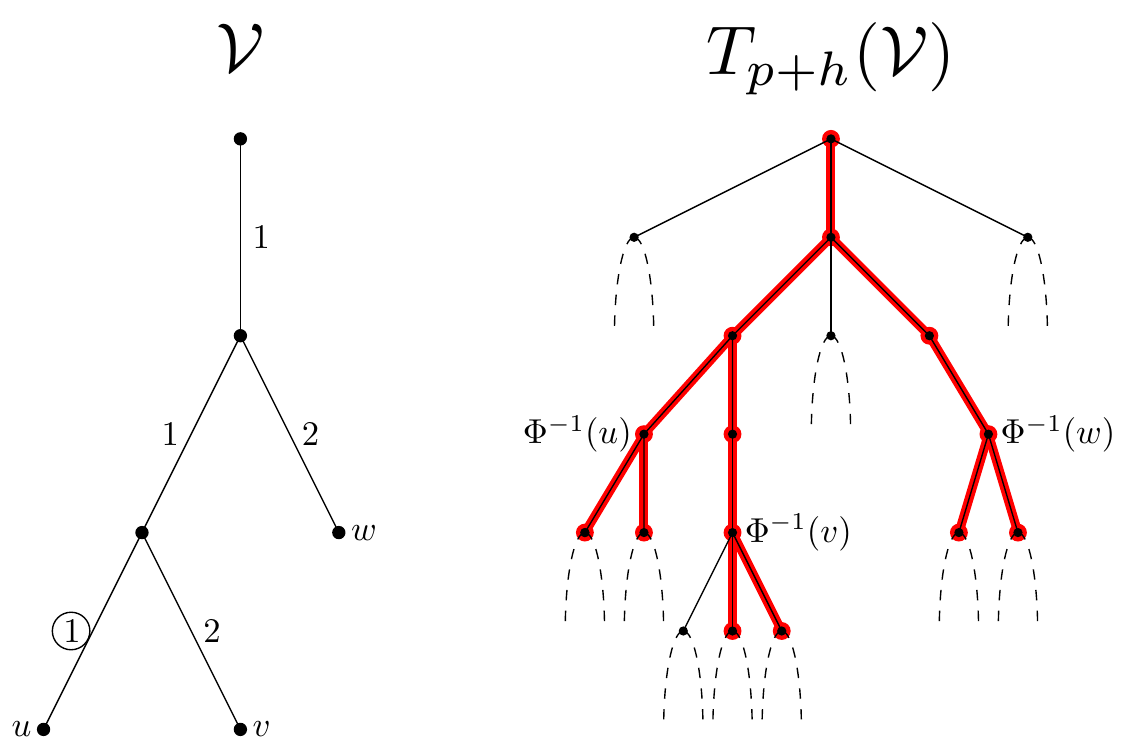}}
\caption{$T$ is in black while $\red T_p$ and $\red T_{p+h}$ 
	are in red.  The trees $T_p (\V)$ and $T_{p+h} (\V)$ are
	initial subtrees of the red tree.  Between times $p$ and $p+h$, some
	pivotal bond in the right subtree of $\Phi_{p+h}^{-1} (u)$ flips (this having previously been the right subtree of the parent of $\Phi_p^{-1}(u)$), causing 
	this portion of $T_{p+h}$ to branch earlier than $T_p$.  Further portions
	of $T$ are omitted in the figure of $T_{p+h}$ to stress that they 
	do not contribute to $T_{p+h}(\mathcal{V})$. }
\end{figure}

The jump rate out of $\open$, call it $\rho_1$, is the easiest to compute.  
Jumps out of $\open$ occur precisely when an edge in $\Ttilde$ closes,
which happens at rate $p^{-1} |E (\Ttilde)|$.  Writing $|E(\Ttilde)|$
as $\sum_{e \in E(\V)} d(e)$ shows that $\rho_1 F (\V, \Ttilde) = 
\sum_{e\in E(\V)}p^{-1} F_e(\V, \Ttilde)$.  Multiplying 
$\rho_1 F(\V , \Ttilde)$ by $\P(T_p(\V)=\Ttilde)$ and summing over 
$\Ttilde \in \subt (\V, T)$ gives $\sum_{e \in E(\V)} p^{-1} 
\TFV (T, F_e, \V, p)$, which is the first term in~\eqref{eq:explicit}.

Next we compute the jump rate out of $\leafbranch$.  
If such a jump takes place, let $q$ represent the time at which it occurs.  
Because $\leafbranch$ is an intersection over $v \in \partial \V$ of 
events that $\embed (v)$ has at least two children in $T_p$ (recall that 
$\embed$ refers to the embedding map defined near the beginning of this 
section), with probability~1 there is a unique $v_* \in \partial \V$ 
such that $\embed(v_*)$ has at least two children in $T_q$ and 
at most one child in $T_{q-}$, which means $\embed(v_*)$ has precisely two
children in $T_q$.  It follows, recalling the construction of $\TFV_{v+2}$
at the beginning of this proof, that $\V (v_*+2) \preceq T_q$ and
that closure of an edge $e \in T_q (\embed(v_*))$ causes an exit from 
$\leafbranch$ if and only if $e \in \embed (e_0) \cup \embed (e_1)$;
the edge must occur before the subtree $T_q (\embed(v_*))$ branches again.  
The rate $\rho_2$ is therefore equal to $\sum_{v_* \in \partial v}
p^{-1} (d(e_0) + d(e_1))$, where $e_j$ are the edges added to 
$\V$ in $\V(v_*+2)$.  Summing this last expression for $\rho_2$ over all 
the possible $\embed(e_0),\embed(e_1)$ combinations (multipled by their 
individual probabilities) and then multiplying by 
$F(\V , \Ttilde)\P(T_q(\V)=\Ttilde)$ and summing 
over $\Ttilde$ gives the second term in~\eqref{eq:explicit}.

Finally, we compute the jump rate into $\Nobranch$.  Such a jump 
can only occur in the following manner.  After the jump, $T_{q-} (\V) 
= \Ttilde$; at the time of the jump, $T_q (w)$ is infinite for 
precisely one child $w$ outside of $\Ttilde$ whose parent $v$ is an
interior vertex of $\Ttilde$; after the jump, one of the edges in
$T_q (w)$, either leading back to the parent or forward but 
somewhere before the first branch, closes.  

There are two possibilities.  First, the parent $v$ of $w$ might have 
at least two children in $T_q$ other than $w$.  In that case, $\V_i(\Phi(v)+1) 
\preceq T_q$ where the index $i$ refers to the relative placement of $w$ 
among the other children of $v$ (see definition $(i)$ at the beginning 
of the proof), and the edge that closes at time $q$ is in $\embed (e_*)$, 
where $e_*$ is the edge added to $\V$ to obtain $\V_i(\Phi(v)+1)$.
The rate at which this happens is $\rho_3 = p^{-1} d(e_*)$.
Summing over all possible $\embed(e_*)$ (multiplied by their probabilities), 
and then multiplying by $F(\V, \Ttilde)\P(T_q(\V)=\Ttilde)$ and summing over 
$i$ and then $\Ttilde$ gives $p^{-1}\TFV_{\Phi(v)+1} (T, F, \V, p)$.
The other possibility is that the parent of $w$ has only one other
child in $T_q$.  Let $e$ denote the descending edge from $\Phi(v)$.  
In this case, depending on whether $w$ is to the left or right of 
that child, $\V (e; L) \preceq T_q$ or $\V (e; R) \preceq T_q$.  
In either case, the jump rate is $\rho_4 := p^{-1} d(e_*)$, 
where as in construction~$(iii)$, the edge $e_* \in E(\V(e;L/R))$ 
corresponds to the path in $T_q$ containing the edge connecting $v$ to $w$.  
Now taking the sum of $\rho_4$ over all possible paths $\embed(e_*)$ 
(multiplied by their probabilities) and then multiplying by 
$F(\V, \Ttilde)\P(T_q(\V)=\Ttilde)$ and summing over the left and right 
cases and then over all $\Ttilde\in\subt$, we obtain
$p^{-1} \TFV_{e+2} (T, F, \V, p)$.  Next taking the sum over all 
$v\in\V\setminus\partial\V$ for the former case (where $v$ had at 
least two children other than $w$) and summing over all $e\in E(\V)$ 
for the latter, we find that the jump rate into $\Nobranch$ is indeed equal to
$$p^{-1} \left ( 
   \sum_{e \in E(\V)} \TFV_{e+2} (T, F, \V, p) + 
   \sum_{v \in \V \setminus \partial \V} \TFV_{v+1} (T, F, \V, p) \right )$$
which is the subtracted term in~\eqref{eq:explicit}.
$\Cox$

The next to last step in proving Theorem~\ref{th:cinf} is to 
apply Proposition~\ref{pr:JRformofderiv} to repeated derivatives
to obtain the following representation of the higher order derivatives
of $g(T,p)$ on $(p_c,1)$.

\begin{lem} \label{lem:iterated}
Under the same hypotheses, for any $k \geq 1$, there exists a finite
set of monomials of degree at most $k + \deg (F)$, call them 
$\{ F_\alpha : \alpha \in \A \}$, constants 
$\{C_{\alpha} : \alpha \in \A \}$, and corresponding collapsed trees 
$\{ \V_\alpha : \alpha \in \A \}$ of size at most $2k + |\E(\V)|$,
such that for $\GW$-almost every tree $T$,
\begin{equation} \label{eq:result 0}
\left ( \frac{d}{dp} \right )^k \TFV (T, F, \V,p) = \sum_{\alpha \in \A}
C_{\alpha} p^{-k-1 + \deg (F_\alpha) - \deg (F)} 
\TFV (T , F_\alpha , \V_\alpha , p) 
\end{equation}
on $(p_c , 1)$.  
\end{lem}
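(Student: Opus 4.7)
The strategy is induction on $k$, using Proposition~\ref{pr:JRformofderiv} both as the base case and as the engine driving the inductive step. The base case $k=1$ is exactly the content of Proposition~\ref{pr:JRformofderiv}: it expresses $(d/dp)\TFV(T,F,\V,p)$ on $(p_c,1)$ as $p^{-1}$ times a finite sum of monomial expectations $\TFV(T,F_i,\V_i,p)$ with $\deg(F_i) = \deg(F)+1 \leq 1+\deg(F)$ and $|E(\V_i)| \leq |E(\V)|+2$, which fits the form claimed in~\eqref{eq:result 0}.

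For the inductive step, assume that $(d/dp)^k \TFV(T,F,\V,p)$ has been written as a finite linear combination $\sum_{\alpha \in \A} C_\alpha p^{c_\alpha} \TFV(T,F_\alpha,\V_\alpha,p)$ satisfying the stated degree and edge bounds. Differentiate each summand once more using the product rule. The derivative of the $p$-power factor contributes $c_\alpha C_\alpha p^{c_\alpha-1}\TFV(T,F_\alpha,\V_\alpha,p)$, preserving both the monomial and the collapsed tree. The derivative of $\TFV(T,F_\alpha,\V_\alpha,p)$ is governed by Proposition~\ref{pr:JRformofderiv} and contributes $C_\alpha p^{c_\alpha-1}$ times a finite sum of terms $\TFV(T,F'_{\alpha,i},\V'_{\alpha,i},p)$ with $\deg(F'_{\alpha,i}) = \deg(F_\alpha)+1$ and $|E(\V'_{\alpha,i})| \leq |E(\V_\alpha)|+2$. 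Summing over $\alpha \in \A$ and reindexing yields a finite representation of the same shape for $(d/dp)^{k+1}\TFV$; since each differentiation inflates the monomial degree by at most $1$ and the tree size by at most $2$, the bounds $\deg(F_\beta) \leq (k+1)+\deg(F)$ and $|E(\V_\beta)| \leq 2(k+1)+|E(\V)|$ propagate correctly.

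Because only finitely many pairs $(F_\alpha,\V_\alpha)$ appear at each stage, Lemma~\ref{lem:finexpmon} supplies finiteness and continuity on $(p_c,1)$ for each summand $\TFV(T,F_\alpha,\V_\alpha,p)$ on a full-$\GW$-measure event. A finite intersection of these events (and a countable intersection over $k$) delivers the identity~\eqref{eq:result 0} on a single event of full measure, with continuous derivatives on $(p_c,1)$.

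The conceptual content is entirely packaged in Proposition~\ref{pr:JRformofderiv}; the present lemma is a routine repeated-differentiation consequence. The only subtle point is the bookkeeping for the exponent of $p^{-1}$ claimed in~\eqref{eq:result 0}: each differentiation drops the $p$-exponent by exactly $1$ (whether it acts on the $p$-power factor, producing $c_\alpha p^{c_\alpha - 1}$, or acts on the $\TFV$ factor, producing a fresh $p^{-1}$ via Proposition~\ref{pr:JRformofderiv}), so the exponent attached to a summand $\TFV(T,F_\beta,\V_\beta,p)$ can be read off from the pair $(k,\deg(F_\beta)-\deg(F))$ by tracking how many of the $k$ differentiations acted on the $\TFV$ branch versus the $p$-power branch. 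All constants and combinatorial coefficients produced along the way are absorbed into the $C_\alpha$'s.
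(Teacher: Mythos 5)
Your proof is correct and is essentially the paper's own argument: the paper likewise obtains \eqref{eq:result 0} by differentiating \eqref{rderivexpfm} a further $k-1$ times, invoking Proposition~\ref{pr:JRformofderiv} at each step and tracking that each differentiation raises the monomial degree by at most one and the tree size by at most two, exactly as in your induction. One small remark: your bookkeeping (like the paper's terse version) actually shows that every summand carries the factor $p^{-k}$ exactly, rather than the exponent $-k-1+\deg(F_\alpha)-\deg(F)$ displayed in the statement; this discrepancy is harmless because $p$ stays bounded away from $0$ on $(p_c,1)$, so only the finite collection of monomials $F_\alpha$ and trees $\V_\alpha$ matters in the later applications.
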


\noindent{\sc Proof:} Differentiate~\eqref{rderivexpfm} a total of
$k-1$ more times, using Proposition~\ref{pr:JRformofderiv} to
simplify each time.  Each time a derivative is taken, either it 
turns $p^{-j}$ into $-j p^{-j-1}$ for some $j$, or else 
a term $\TFV (T, F' , \V' , p)$ is replaced by a sum of terms
$\TFV (T, F'', \V'', p)$ where $\deg (F'') = 1 + \deg (F')$ and
$|E(\V'')| \leq 2 + |E(\V')|$.  The lemma follows by induction.  
$\Cox$

\medskip

\noindent{\sc Proof of Theorem \ref{th:cinf}}:
Applying Lemma~\ref{lem:iterated} to~\eqref{eq:N=1}, we see that
there are a pair of finite index sets $A$ and $B$, constants 
$\{C_{\alpha} : \alpha \in A \}$ and $\{C_{\beta} : \beta \in B \}$, 
collapsed trees $\{ \V_\alpha : 
\alpha \in A \}$ and $\{ \V_{\beta} : 
\beta \in B \}$, and corresponding monomials $\{ F_{\alpha} : 
\alpha \in A \}$ and $\{ F_{\beta} : 
\beta \in B \}$, such that 
\begin{align*}\left ( \frac{d}{dp} \right )^{k+1} g(T,p) = 
\left ( \frac{d}{dp} \right )^k p^{-1}\TFV (T, F_1 , \V_1 , p) &=
\sum_\alpha C_{\alpha} p^{-k-2 + \deg F_\alpha} 
\TFV (T, F_\alpha , \V_\alpha , p) \\ &+\sum_\beta C_{\beta} 
p^{-k-3 + \deg F_{\beta}} \TFV (T, F_{\beta} , \V_{\beta} , p)
\end{align*}
(note the need for the pair of distinct sums is on account of the 
$p^{-1}$ term in front of $\TFV (T, F_1 , \V_1 , p)$).  It follows 
from this that the $k$th derivative from the right of $g(T,p)$ 
exists for $\GW$-almost every tree $T$ and is given by an expression 
which is continuous in $p$.   
$\Cox$

\subsection{Continuity of the derivatives at $p_c$} \label{sec:cont-at-pc}

We now address the part of Theorem~\ref{th:main} concerning the 
behavior of the derivatives of $g$ near criticality.  We restate 
this result here as the following Theorem.

\begin{thm}\label{th:contderivpc}
If $\E[Z^{(2k^2 +3)(1+\beta)}]<\infty$ for some $\beta > 0$, then 
$$\lim_{p \to p_c^+} g^{(j)}(\TT,p)=j! M^{(j)}$$ 
for every $j\leq k\ \GW\text{-a.s.}$ where $M^{(j)}$ are as in 
Theorem~\ref{th:g-expansion}.
\end{thm}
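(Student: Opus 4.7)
Set $N := k^2+1$, so that $(2N+1)(1+\beta) = (2k^2+3)(1+\beta)$. The hypothesis then activates Theorem~\ref{th:g-expansion}, giving the $\GW$-a.s. expansion
\begin{equation*}
g(\TT, p_c+\ee) = \sum_{i=1}^{N} M^{(i)} \ee^i + o(\ee^{N})
\end{equation*}
with the martingale limits $M^{(i)}$ from that theorem. I would fix a tree $\TT$ in the full-measure set on which this expansion holds and on which Theorem~\ref{th:cinf} and the quantitative bounds of Proposition~\ref{pr:k+1/k} both apply, and then run a finite induction on $j$ from $1$ up to $k$, aiming to show $g^{(j)}(\TT, p_c+\ee) \to j!\,M^{(j)}$ as $\ee \downarrow 0$.

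\textbf{Inductive step.} Suppose the convergence has been established for all $j' < j$. Apply Lemma~\ref{lem:g-upper-bound} with its internal $k$ taken equal to our $j$ and its internal $N$ equal to ours: if $g^{(j)}(\TT,\cdot)$ failed to converge to $j!\,M^{(j)}$ from the right at $p_c$, the lemma would produce a sequence $u_n \downarrow 0$ with
\begin{equation*}
|g^{(j+1)}(\TT, p_c+u_n)| = \omega\bigl(u_n^{-N/j}\bigr).
\end{equation*}
Since $j \leq k$, the exponent satisfies $N/j \geq (k^2+1)/k = k + 1/k$. To produce a contradiction, I would apply Lemma~\ref{lem:iterated} to the identity $g'(\TT,p) = p^{-1}\TFV(\TT, F_1, \V_1, p)$ from \eqref{eq:N=1}, writing $g^{(j+1)}(\TT, p)$ as a finite sum of rescaled monomial expectations $\TFV(\TT, F_\alpha, \V_\alpha, p)$ with each $|E(\V_\alpha)| \leq 2j+1 \leq 2k+1$ and each $\deg(F_\alpha) \leq j+1 \leq k+1$. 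Invoking Proposition~\ref{pr:k+1/k} on this finite collection, one bounds each such $\TFV$ near $p_c$ by $O(\ee^{-\lambda})$ for a $\lambda$ strictly less than $k+1/k \leq N/j$, producing a polynomial upper bound on $|g^{(j+1)}(\TT, p_c+\ee)|$ strictly slower than $u_n^{-N/j}$ and contradicting the $\omega$-statement. This forces convergence at stage $j$ and closes the induction.

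\textbf{Main obstacle.} The heart of the argument is the quantitative bound of Proposition~\ref{pr:k+1/k} on the $\TFV$ quantities as $p \downarrow p_c$. The collapsed-tree representation near the end of Section~\ref{sec:c3} shows that edge weights of $\Phi(\TT_p)$ are geometric with success probability $1-A_p$, which degenerates to zero as $p \to p_c^+$; one must show that the polynomial moments encoded by a monomial $F$ of bounded degree blow up only as a bounded power of $(1-A_p)^{-1}$ and hence of $(p-p_c)^{-1}$, uniformly over the finitely many pairs $(\V_\alpha, F_\alpha)$ produced by Lemma~\ref{lem:iterated}. The quadratic-in-$k$ moment threshold $2k^2+3$ is dictated by the need to match this blow-up rate against the calculus threshold $N/k = k+1/k$ coming from Lemma~\ref{lem:g-upper-bound}.
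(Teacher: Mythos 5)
Your proposal is correct and follows essentially the same route as the paper: the order-$(k^2+1)$ expansion from Theorem~\ref{th:g-expansion}, a polynomial blow-up bound on $g^{(j+1)}$ obtained from Lemma~\ref{lem:iterated} together with Proposition~\ref{pr:k+1/k}, and a contradiction via Lemma~\ref{lem:g-upper-bound} inside an induction on the order of the derivative. The only detail left implicit is that the worst-case exponent bound relies on $2\ell - \mathcal{E} \geq 1$ for every collapsed tree (the paper's Lemma~\ref{lem:bvsedgs}), which you use without naming.
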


To prove Theorem~\ref{th:contderivpc} we need to bound how badly
the monomial expectations $\TFV (T, F, \V, p_c + \ee)$ can blow
up as $\ee \downarrow 0$, then use Lemma~\ref{lem:g-upper-bound}
to see that they can't blow up at all.  
%%%developed in Lemma~\ref{lem:finexpmon}.  

\begin{pr} \label{pr:k+1/k}
Let $\V$ be a collapsed tree with $\ell$ leaves and $\mathcal{E}$ 
edges and let $F$ be a monomial in the edges of $\V$.  Suppose that 
the offspring distribution has at least $m$ moments, where 
$m \geq \max_e F(e)$ and also $m \geq 3$.  Then
$$\TFV (T, F, \V, p_c + \ee ) = O \left ( \ee^\lambda \right )$$
for any $\lambda < 2\ell - \mathcal{E} - \deg (F)$ and $\GW$-almost 
every $T$.
\end{pr}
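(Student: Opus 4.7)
My plan for proving Proposition~\ref{pr:k+1/k} is as follows. First, I would establish the annealed bound $\E[\TFV(\TT,F,\V,p_c+\ee)]=\Theta(\ee^{2\ell-\mathcal{E}-\deg F})$ using the explicit Galton--Watson structure of the collapsed tree $\Phi(\TT_p)$ from the proof of Lemma~\ref{lem:R}: conditional on $H(p)$, this tree has offspring distribution $Z_p$ at the root, $(Z_p\mid Z_p\geq 2)$ at every other interior vertex, and IID geometric edge weights $G$ with parameter $1-A_p$. Conditioning on $\V\preceq\TT_p$, the edge weights $d(e)$ are independent copies of $G$, giving the factorisation
\[
\E[\TFV(\TT,F,\V,p)] = g(p)\,\P(Z_p=\deg_{\V}(\rtt))\prod_{v}\P(Z_p=\deg_{\V}(v)\mid Z_p\geq 2)\prod_{e\in E(\V)}\E[G^{F(e)}],
\]
where the middle product ranges over the $\mathcal{E}-\ell$ interior non-root vertices of $\V$. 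Substituting $g(p_c+\ee)\sim r_1\ee$ and $1-A_p\sim\mu\ee$ (so $\E[G^{F(e)}]=\Theta(\ee^{-F(e)})$, finite by the moment hypothesis $m\geq\max_e F(e)$), and expanding $\phi_p$ at zero to obtain $\P(Z_p=k\mid Z_p\geq 2)=\Theta(\ee^{k-2})$ for $k\geq 2$ and $\P(Z_p=k)=\Theta(\ee^{\max(0,k-1)})$, the identity $\deg_{\V}(\rtt)+\sum_v\deg_{\V}(v)=\mathcal{E}$ telescopes the exponents to precisely $2\ell-\mathcal{E}-\deg F$.

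I would then pass from expectation to an almost-sure bound via Borel--Cantelli on a dense sequence. Given $\lambda<2\ell-\mathcal{E}-\deg F$, choose $\lambda'\in(\lambda,\,2\ell-\mathcal{E}-\deg F)$ and an integer $s$ so large that $s(2\ell-\mathcal{E}-\deg F-\lambda')>1$, and set $\ee_n:=n^{-s}$. By Markov's inequality applied to the annealed bound,
\[
\P\bigl(\TFV(\TT,F,\V,p_c+\ee_n)>\ee_n^{\lambda'}\bigr)=O\!\left(n^{-s(2\ell-\mathcal{E}-\deg F-\lambda')}\right),
\]
which is summable; Borel--Cantelli then yields $\TFV(T,F,\V,p_c+\ee_n)\leq\ee_n^{\lambda'}$ for $\GW$-almost every $T$ and all sufficiently large $n$.

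The main obstacle is the interpolation from the discrete sequence $\{\ee_n\}$ to arbitrary small $\ee$, since $\TFV(T,F,\V,\cdot)$ has no evident monotonicity in $p$. For $\ee\in[\ee_{n+1},\ee_n]$ I would return to the decomposition $\TFV(T,F,\V,p)=\sum_{\Ttilde\in\subt(T,\V)}\TFV|_{\Ttilde}(p)$ established in Lemma~\ref{lem:finexpmon}, in which each summand factors as $p^{|E(\Ttilde)|}$ times products of $(1-pg(T(w),p))$ and $g_2(T(v),p)$. The no-branch and $g_2$ ratios between $p_c+\ee$ and $p_c+\ee_n$ tend to $1$ uniformly by their quenched asymptotics; the delicate factor $p^{|E(\Ttilde)|}$ has a ratio $(1+O(n^{-s-1}))^{|E(\Ttilde)|}$, which stays bounded when $|E(\Ttilde)|\ll n^{s+1}$ and, for larger $|E(\Ttilde)|$, is dominated by the rapidly decaying $p_c^{|E(\Ttilde)|}$ already present (recall $\ee_n-\ee_{n+1}=o(\ee_n)$). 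This yields a uniform ratio bound $\TFV(T,F,\V,p_c+\ee)\leq C\,\TFV(T,F,\V,p_c+\ee_n)$, and combined with the slack $\lambda'>\lambda$ and $\ee\asymp\ee_n$ completes the proof.
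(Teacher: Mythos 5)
Your annealed computation and the Markov--Borel--Cantelli step along a sequence are essentially the paper's argument, and they are fine. The gap is in the interpolation step, which you correctly identify as the main obstacle but do not actually close. Your proposed ``uniform ratio bound'' $\TFV(T,F,\V,p_c+\ee)\leq C\,\TFV(T,F,\V,p_c+\ee_n)$ rests on the claim that the no-branch and $g_2$ ratios between the two parameters ``tend to $1$ uniformly by their quenched asymptotics.'' No such uniformity is available: as $\Ttilde$ ranges over the infinite set $\subt(T,\V)$, the factors $1-p\,g(T(w),p)$ and $g_2(T(v),p)$ involve infinitely many distinct vertices $w,v$ of $T$, and the quenched asymptotics of $g(T(w),\cdot)$ and $g_2(T(v),\cdot)$ near $p_c$ have constants (and moduli of continuity) depending on the subtree at $w$ or $v$, with no uniform control. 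Worse, the comparison goes the wrong way for the factor you would need: if you compare to the upper endpoint $\ee_n$, the open and leaf-branch factors are fine by monotonicity, but each no-branch factor $1-p\,g(T(w),p)$ is \emph{decreasing} in $p$, so its ratio exceeds $1$, and the product is over a number of vertices $w$ that is unbounded over $\Ttilde$; since $\sup_w g(T(w),p)$ does not tend to $0$ as $p\downarrow p_c$ (the tree a.s.\ contains arbitrarily bushy subtrees), the per-summand ratio is not uniformly bounded. If instead you compare to the lower endpoint, the no-branch factors are fine but the open factor $(1+O(n^{-s-1}))^{|E(\Ttilde)|}$ and the $g_2$ factors go the wrong way, and your dismissal of the large-$|E(\Ttilde)|$ tail as ``dominated by $p_c^{|E(\Ttilde)|}$'' is not a proof: near $p_c$ the typical $|E(\Ttilde)|$ is of order $\ee^{-1}$ and the relevant exponential-moment condition $(1+r)A_p<1$ from Lemma~\ref{lem:R} fails for fixed $r$ as $p\downarrow p_c$, so the tail needs a genuine estimate.

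The paper resolves exactly this point differently: instead of a quenched ratio bound, it defines a two-parameter dominating quantity
$\Psi(T,F,\V,\ee_1,\ee_2)=\sum_{\Ttilde}\P_T(\open(\Ttilde,\ee_2))\,\P_T(\Nobranch(\Ttilde,\ee_1))\,\P_T(\leafbranch(\Ttilde,\ee_2))\,F(\Ttilde,\V)$,
taking each factor at whichever endpoint makes it an upper bound by monotonicity, so that $\Psi\geq \TFV(T,F,\V,x)$ for every $x\in[p_c+\ee_1,p_c+\ee_2]$ deterministically, with no uniformity-in-$T$ claim needed. The price is paid at the annealed level only: $\E\Psi$ is compared to the annealed bound at $\ee_1$ via the bounded ratio $g_2(p_c+\ee_2)/g_2(p_c+\ee_1)$ and a tilted geometric moment $\E(1+\delta)^G G^{F(e)}$, which is why the paper takes a geometric sequence $\ee_n=(4/3)^{-n}$ (keeping $\ee_2<\tfrac32\ee_1$ so the tilt stays subcritical) rather than your polynomial sequence. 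To repair your proof you would need either to adopt this mixed-endpoint domination, or to supply a quenched, uniform-over-$\subt(T,\V)$ control of the no-branch and tail contributions, which your sketch does not provide.
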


To prove this,
we first record the asymptotic behavior of the following annealed 
quantities at $p_c + \ee$, where $K = 2/(p_c^3 \phi''(1))$ as in 
Proposition~\ref{pr:K}.
\begin{lem} \label{lem:annealed}
Assume $\phi$ has at least three moments.  Then as $\ee\to 0^+$
\begin{eqnarray*}
g(p_c + \ee) & \sim & K \ee \\
1 - A_p & \sim & \mu \ee \\
g_2 (p_c + \ee) & \sim & K \mu \ee^2\,.
\end{eqnarray*}
\end{lem}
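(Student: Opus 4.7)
The plan is to establish each of the three asymptotic equivalences in sequence, drawing on facts already proved earlier in the paper. The first, $g(p_c + \ee) \sim K\ee$, is essentially immediate: Proposition~\ref{pr:K} identifies $K = \partial_+ g(p_c)$ and Lyons' theorem gives $g(p_c)=0$, so Taylor expansion at $p_c$ yields $g(p_c+\ee)=K\ee+o(\ee)$.

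For the second, I would first obtain a closed form for $A_p$. Differentiating \eqref{eq:phi_p} via the chain rule gives $\phi_p'(z) = p\phi'(1 - pg(p)(1-z))$, so $A_p = \phi_p'(0) = p\phi'(1 - pg(p))$. The asymptotic $1 - A_p \sim \mu\ee$ then follows directly from \eqref{eq:order}, which was established en route to Proposition~\ref{pr:simfannc} and states that $1 - p\phi'(1 - pg(p)) \sim \mu(p - p_c)$ as $p \downarrow p_c$ (using only the three-moment hypothesis and the explicit value $K = 2/(p_c^3\phi''(1))$).

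For the third, the plan is to factor $g_2(p) = g(p)(1 - A_p)$ and then apply the first two. To derive the factorization, observe that a child $v$ of the root lies in $\TT_p$ iff its incoming edge is open (probability $p$) and $\TT(v)$ survives $p$-percolation from $v$ (probability $g(p)$), independently across children; conditional on $Z$, the number of children of the root in $\TT_p$ is then $\mathrm{Bin}(Z, pg(p))$. Summing over $Z$,
$$g_2(p) = \E\bigl[1 - (1 - pg(p))^Z - Z pg(p)(1 - pg(p))^{Z-1}\bigr] = 1 - \phi(1 - pg(p)) - pg(p)\phi'(1 - pg(p)).$$
Using the fixed-point identity $\phi(1 - pg(p)) = 1 - g(p)$ together with $p\phi'(1 - pg(p)) = A_p$, this collapses to $g(p)(1 - A_p)$. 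Combining with the first two asymptotics gives $g_2(p_c+\ee) \sim (K\ee)(\mu\ee) = K\mu\ee^2$.

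No real obstacle is anticipated; the main task is simply locating the right closed-form expressions for $A_p$ and $g_2(p)$ in terms of $\phi$ and $g$, after which everything reduces to Proposition~\ref{pr:K} and \eqref{eq:order}. The only mild care needed is verifying the three-moment hypothesis suffices, which it does because \eqref{eq:order} and Proposition~\ref{pr:K} both require only $\phi''(1)<\infty$.
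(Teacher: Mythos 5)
Your proof is correct and follows essentially the same route as the paper: the same formula $A_p = p\phi'(1-pg(p))$ obtained by differentiating~\eqref{eq:phi_p}, and the same factorization $g_2(p)=g(p)(1-A_p)$. The only cosmetic differences are that you quote the already-established asymptotic~\eqref{eq:order} where the paper instead differentiates $A_p$ at $p_c$ directly, and you justify the factorization by a direct binomial computation rather than via the conditioned Galton-Watson description of $\TT_p$ from Proposition~\ref{pr:T_p}; both are equivalent.
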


\noindent{\sc Proof:}
The first of these is Proposition~\ref{pr:K}.  For the second,
we recall from the definition that $\disp A_p = \left. \frac{d}{dz} 
\phi_p' (z) \right |_{z=0}$ and differentiate~\eqref{eq:phi_p} at 
$z=0$ to obtain
$$A_p = p \phi' (1 - p g(p)) \, . $$
Differentiating with respect to $p$ at $p = p_c$ and using $g(p_c) = 0$,
$\phi'(1) = \mu$ and $g'(p_c) = K$ then gives
\begin{eqnarray*}
\left. \frac{d}{dp} A_p \right |_{p_c} & = & \phi' (1 - p_c g(p_c))
   - [g(p_c) + p_c g'(p_c)] p_c \phi'' (1 - p_c g(p_c)) \\
& = & \phi' (1) - p_c^2 K \phi''(1) \\
& = & - \mu
\end{eqnarray*}
which proves the second estimate.  The third we can obtain by first 
calculating $\P(Z_p\geq 2)$ (recall $Z_p$ represents the size of the first 
generation of $\TT_p$ conditioned on $H(p)$), which is simply equal 
to $1-A_p$.  Now multiplying by $g(p)$ and using the first two estimates 
for $1-A_p$ and $g(p)$ near $p_c$, we obtain the third estimate.
$\Cox$

\noindent{\sc Proof of Proposition}~\ref{pr:k+1/k}:
We begin by computing annealed expectations.  Observe that 
$$\ell = 1 + \sum_{v \in \V \setminus \partial \V} (\deg_v - 1)$$
where $\deg_v$ is the number of children of $v$. 
Observe next that 
$$\P (Z_p \geq k) \leq \frac{1}{g(p_c + \ee)} 
   \E {Z \choose k} [(p_c+\ee) g(p_c + \ee)]^k 
   = O(\ee^{k-1})$$
under the assumption of at least $m$ moments with $k\leq m$ (where 
$Z$ represents the unconditioned offspring r.v.).  If $k>m$ then 
the above equality still gives us $\P(Z_p\geq k)\leq\P(Z_p\geq m)=O(\ee^{m-1})$.  
Putting these together we now obtain the expression 
\begin{equation} \label{eq:deg}\P (Z_p \geq k) = O(\ee^{(k\wedge m)-1})
\end{equation}whenever we have at least $m$ moments.  For future use, 
we also note here that if we condition on $Z_p\geq 2$ in the above 
inequalities, then the denominator in the expression above \eqref{eq:deg} 
becomes $g_2(p_c+\ee)$, and the expression on the right in \eqref{eq:deg} 
becomes $O(\ee^{(k\wedge m)-2})$.  Finally, recall as well that the 
collapsed annealed tree $\Phi(\TT_{p_c + \ee})$ has edge weights that are 
IID geometric random variables with means 
$1 / (1 - A_{p_c + \ee}) \sim \mu^{-1} \ee^{-1}$.  

Using these, we estimate the annealed expectation of $\TFV (\TT, F, 
\V, p_c + \ee)$ by using the branching process description of 
$\Phi(\TT_{p_c + \ee})$ to write the quantity 
$\E \TFV (\TT, F, \V, p_c + \ee)$ as the probability of the event 
$H(p)$ that $\TT_p$ is nonempty, multiplied by
the product of the following expectations which, once we condition on $H(p)$, 
are jointly independent.  
Assume that $Z$ has at least $m$ moments for $m = \max_e F(e)$.
The event $H(p_c + \ee)$ has probability 
$g(p_c + \ee) = O(\ee)$ for $\TT_p$.  Conditional on $H(p_c+\ee)$, there is 
a factor of $\E G^{F(e)} = O(\ee^{-F(e)})$ for each edge.  Lastly, because 
the non-root vertices of $\Phi(\TT_{p_c + \ee})$ consist precisely
of those non-root vertices of $\TT_{p_c + \ee}$ that posses more than 
one child, this amounts to conditioning on having at least two offspring.  
Therefore, there is a factor of $O(\ee^{\deg_v-1})$ for $v$ equal to the 
root, and a factor of $O(\ee^{\deg_v-2})$ for every other interior vertex 
of $\V$.  Multiplying all of these gives 
$$\E \TFV (\TT, F, \V , p_c + \ee) = O 
\left ( \ee^{2\ell - \mathcal{E} - \deg F} \right )$$
(note the exponent is simply a different way of writing 
$\ell-|\V\setminus\partial\V|+1-\deg F$).

The intuition behind the rest of the proof is as follows.
By Markov's inequality, the quenched expectation cannot be more than
$\ee^{-\delta}$ times this except with probability $\ee^\delta$.
As $\ee$ runs over powers of some $r<1$, these are summable, hence
by Borel-Cantelli, this threshold is exceeded finitely often.  
This only proves the estimate along the sequence $p_c + r^n$.
To complete the argument, one needs to make sure the quenched 
expectation does not blow up between powers of $r$.  
This is done by replacing the monomial expectation with an expression
$\Psi (T, F, \V, \ee_1 , \ee_2)$ that is an upper bound for the quenched 
expectation $\TFV (T, F, \V, x)$ as $x$ varies over an interval 
$[p_c + \ee_1 , p_c + \ee_2]$.

Accordingly, fix $\ee_1 < \ee_2$ and let $x$ vary over the interval 
$[p_c + \ee_1 , p_c + \ee_2]$.  Extend the notation in an obvious
manner, letting $\P_T (\open (\Ttilde , x))$ denote the probability
of all edges of $\Ttilde$ being open at parameter $x$, and similarly
for $\Nobranch$ and $\leafbranch$.  Fixing $T$, by monotonicity, 
\begin{eqnarray*}
\P_T (T_x (\V) = \Ttilde) & = & 
   \P_T (\open (\Ttilde , x)) 
   \P_T (\Nobranch (\Ttilde , x)) 
   \P_T (\leafbranch (\Ttilde , x)) \\
& \leq & 
   \P_T (\open (\Ttilde , \ee_2)) 
   \P_T (\Nobranch (\Ttilde , \ee_1)) 
   \P_T (\leafbranch (\Ttilde , \ee_2)) \, .
\end{eqnarray*}
Thus we may define the upper bound $\Psi$ by
$$\Psi (T, F, \V, \ee_1 , \ee_2) = \sum_{\Ttilde \in \subt (T , \V)}
   \P_T (\open (\Ttilde , \ee_2)) 
   \P_T (\Nobranch (\Ttilde , \ee_1)) 
   \P_T (\leafbranch (\Ttilde , \ee_2)) F(\Ttilde , \V) \, .$$

Taking the expectation, 
$$\E \Psi (\TT , F, \V, \ee_1 , \ee_2) = \int \left [ 
   \sum_{\Ttilde \in \subt (T , \V)}
   \P_{T} (\open (\Ttilde , \ee_2)) 
   \P_{T} (\Nobranch (\Ttilde , \ee_1)) 
   \P_{T} (\leafbranch (\Ttilde , \ee_2)) 
   \right ] \,  F(\Ttilde , \V) \, d\GW (T) .$$
In each summand, first integrate over the variables $\deg_w$
for $w \geq v$ with $v \in \partial \Ttilde$.  This replaces
$\P_T (\leafbranch (\Ttilde , \ee_2))$ by 
$g_2 (p_c + \ee_2)^{|\partial \Ttilde|}$.  Similarly, integrating
$\P_T (\leafbranch (\Ttilde , \ee_1)$ over just these variables 
would replace this with a factor of 
$g_2 (p_c + \ee_1)^{|\partial \Ttilde|}$.  Therefore, using that 
$|\partial\Ttilde|=|\partial\V|$ and noticing also that 
$\P_T (\open (\Ttilde , \ee_2)) = 
([p_c + \ee_2) / (p_c + \ee_1)]^{|E(\Ttilde)|}\P_T 
(\open (\Ttilde , \ee_1)) $, we see that
\begin{eqnarray*}
\E \Psi (\TT, F, \V , \ee_1 , \ee_2) & = &  
   \int \left [ 
   \sum_{\Ttilde \in \subt (T , \V)}
   \left ( \frac{g_2 (p_c + \ee_2)}{g_2 (p_c + \ee_1)} 
      \right )^{|\partial\V|} 
   \left ( \frac{p_c + \ee_2}{p_c + \ee_1} \right )^{|E (\Ttilde)|} 
   \P_{T} (\open (\Ttilde , \ee_1)) 
   \right. \\
&& \left. \cdot \, \P_{T} (\Nobranch (\Ttilde , \ee_1)) \cdot \, 
   \P_{T} (\leafbranch (\Ttilde , \ee_1))^{\phantom{~}}_{\phantom{~}} 
   \right ] \,  F(\Ttilde , \V) \, d\GW (T)  \\[2ex]
& = & 
   \int \left [ 
   \sum_{\Ttilde \in \subt (T , \V)}
   \left ( \frac{g_2 (p_c + \ee_2)}{g_2 (p_c + \ee_1)} 
      \right )^{|\partial\V|} 
   \left ( \frac{p_c + \ee_2}{p_c + \ee_1} \right )^{|E (\Ttilde)|} 
   \P_T (T_{p_c + \ee_1} (\V) = \Ttilde) 
   \right ] \,  F(\Ttilde , \V) \, d\GW (T). 
\end{eqnarray*}
To integrate over $d\GW (T)$, recall that the edge weights $d(e)$ 
in $\Phi(T_{p_c + \ee_1})$ are IID geometrics with mean 
$1/(1 - A_{p_c + \ee_1})$ and independent from the degrees 
$\deg_{\iota (v)}$.  Let $C$ denote any upper bound for $\P (H(p))$ 
times the product over interior vertices $v\in\V$ of the quantity 
$\ee_1^{\deg_v - j} \P_p (Z_p = \deg_v|Z_p\geq j)$, which is finite 
by~\eqref{eq:deg} (note we're assuming here that $j=1$ for the root, 
which means no conditioning, and $2$ for all other interior vertices).  
Let $G$ denote such a geometric random variable as referenced above, 
let $\alpha$ denote an upper bound for 
$g_2 (p_c + \ee_2) / g_2 (p_c + \ee_1)$ and 
$1 + \delta$ denote an upper bound for $(p_c + \ee_2) / (p_c + \ee_1)$.
Integrating against $d\GW (T)$ now yields
$$\E \Psi (\TT, F, \V , \ee_1 , \ee_2) \leq
   \alpha^{|\partial \V|} \cdot 
   C \ee_1^{2\ell-\mathcal{E}} \cdot 
   \prod_{e \in E(\V)} \left [ \E (1 + \delta)^G G^{F(e)} \right ] 
   \; .$$

Because $\P (G=k) = (1-A_p) A_p^{k-1}$, we see that 
for any $f$, $(1+\delta)^G f(G)$ may be computed as 
\begin{align*}\E (1+\delta)^G f(G) = \sum_{k=1}^\infty 
(1+\delta)^k (1-A_p) A_p^{k-1} f(k) 
&= \frac{(1-A_p)(1+\delta)}{1 - (1+\delta) A_p}
\sum_{k=1}^\infty (1 - (1+\delta) A_p) ((1+\delta) A_p)^{k-1} f(k) \\ 
&= \frac{(1-A_p)(1+\delta)}{1 - (1+\delta) A_p} \E f(G')\end{align*}
where $G'$ is a geometric r.v. with parameter $(1+\delta) A_p$.  Thus,
\begin{equation} \label{eq:Psi UB}
\E \Psi (\TT, F, \V , \ee_1 , \ee_2) \leq C \alpha^{|\partial \V|}
\ee_1^{2\ell-\mathcal{E}}\frac{(1-A_p)(1+\delta)}{1 - (1+\delta) A_p} 
\prod_{e \in E(\V)} \E (G')^{F(e)} \, .
\end{equation} 

If we choose $\ee_2 < \frac{3}{2}\ee_1$ then 
$\delta = (p_c + \ee_2)/(p_c + \ee_1) - 1 < \frac{\mu}{2}\ee_1$ 
which implies $1 - (1+\delta) A_p > (1/2) (1 - A_p)$ as $\ee_1\to 0$.  
This in turn implies that as $\ee_1\to 0$ we have 
$\E (G')^k \leq 2^k \E G^k = O(\ee^{-k})$, and hence

\begin{equation} \label{eq:E Psi}
\E \Psi (\TT, F, \V , \ee_1 , \ee_2) = O 
\left ( \ee^{2\ell - \mathcal{E} - \deg (F)}\right ) \, .
\end{equation}

Now the Borel-Cantelli argument is all set up.  Let 
$\ee_n := (\frac{4}{3})^{-n}$ and apply the previous argument
with $\ee_2 = \ee_n$ and $\ee_1 = \ee_{n+1}$.  We see that
$$\E \Psi (\TT, F, \V , \ee_{n+1} , \ee_n) 
= O \left ( \ee_{n+1}^{2\ell - \mathcal{E} - \deg (F)}\right )$$
and hence by Markov's inequality,
$$\P \Big( \Psi (\TT, F, \V , \ee_{n+1} , \ee_n)>\ee_{n+1}^{2\ell - 
\mathcal{E} - \deg (F) - t}\Big) = 
O\Big(\Big(\frac{4}{3}\Big)^{-tn}\Big) \, .$$
This is summable, implying these events occur finitely often,
implying that the quenched survival function satisfies 
$$\TFV (T, F, \V, p_c + \ee) = O \left ( 
   \ee^{2\ell - \mathcal{E} - \deg (F) - t} \right )$$
for $\GW$-almost every $T$.
$\Cox$

\medskip
Before presenting the proof of Theorem \ref{th:contderivpc}, 
we just need to establish one final lemma.

\medskip
\begin{lem} \label{lem:bvsedgs} Every non-empty collapsed tree 
$\V$ satisfies the inequality $2\ell-\mathcal{E}\geq 1$.\end{lem}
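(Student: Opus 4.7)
The plan is to reduce the inequality to a standard handshake-type count by using the tree-edge identity $\mathcal{E} = I + \ell - 1$, where $I$ denotes the number of internal (non-leaf) vertices of $\V$. Since this identity rewrites the target inequality as
\[
2\ell - \mathcal{E} \;=\; \ell - I + 1 \;\geq\; 1 \quad\Longleftrightarrow\quad \ell \geq I,
\]
the entire proof reduces to showing that a collapsed tree has at least as many leaves as internal vertices.

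First, I would dispose of the degenerate case in which $\V$ consists of a single vertex (no edges), for which $\ell=1$, $I=0$, $\mathcal{E}=0$, and the inequality holds trivially with room to spare. Assuming $\V$ has at least one edge, the root has at least one child and is therefore internal, so we may sum the number of children over all vertices. This sum equals $\mathcal{E}$, and by the defining property of a collapsed tree every non-root internal vertex contributes at least $2$, while the root contributes at least $1$. Hence
\[
\mathcal{E} \;=\; \sum_{v \in \V} (\text{\# children of } v) \;\geq\; 2(I-1) + 1 \;=\; 2I - 1.
\]
Combining with $\mathcal{E} = I + \ell - 1$ immediately yields $\ell \geq I$, which is exactly what is required.

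There is essentially no hard step here; the only thing to be mindful of is the root, which is the one vertex permitted to have only a single child, and which must be treated separately in the degree sum so that the bound $\mathcal{E} \geq 2(I-1)+1$ is correctly applied rather than the stronger $\mathcal{E} \geq 2I$ that holds when the root also branches. It is worth remarking that equality $2\ell - \mathcal{E} = 1$ is achieved exactly when $\ell = I$, which by the above reasoning forces the root to have precisely one child and every other internal vertex to have precisely two children; this is consistent with the examples of collapsed trees appearing in the monomial expansions of Section~\ref{sec:c3}.
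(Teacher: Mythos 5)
Your proof is correct, and it takes a genuinely different route from the paper's. You argue by direct double counting: combining the tree identity $\mathcal{E} = I + \ell - 1$ (with $I$ the number of internal vertices) with the children-sum bound $\mathcal{E} = \sum_v \#\{\text{children of } v\} \geq 2(I-1)+1$, which uses exactly the defining property that only the root may have a single child; this gives $\ell \geq I$, i.e.\ $2\ell - \mathcal{E} \geq 1$, with the single-vertex case checked separately. The paper instead argues by induction on a construction of $\V$: it verifies the bound for height-one trees and then observes that attaching $k \geq 2$ children to a boundary vertex changes $2\ell - \mathcal{E}$ by $2(k-1) - k = k-2 \geq 0$, together with the (asserted, not proved) claim that every collapsed tree is reachable from a height-one tree by finitely many such steps. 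Your argument avoids that structural claim entirely and is self-contained, and as a small bonus it identifies the equality case $2\ell - \mathcal{E} = 1$ (root with exactly one child, all other internal vertices binary); the paper's approach, in exchange, makes transparent how the quantity $2\ell - \mathcal{E}$ behaves under the tree perturbations used elsewhere in Section~\ref{sec:c3}. Either proof suffices for the application in Theorem~\ref{th:contderivpc}.
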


\begin{proof}
Certainly the lemma holds for any $\V$ of height one.  Now if $\V$ 
represents any non-empty collapsed tree for which the lemma applies 
and we add $k$ children to one of the boundary vertices of $\V$
(note we can assume $k\geq 2$ since all non-root interior vertices 
of a collapsed tree must have at least $2$ children), then the value 
of $2\ell-\mathcal{E}$ is increased by $2(k-1)-k = k-2 \geq 0$, which 
means the lemma still applies to the new collapsed tree.  Since any 
collapsed tree can be obtained from a height one tree via finitely 
many of these steps, the lemma then follows.$\Cox$
\end{proof}

\bigskip
{\sc Proof of Theorem~\ref{th:contderivpc}:}
Presume the result holds for all $j<k$.  Lemma~\ref{lem:iterated} 
expresses $(d/dp)^{k+1} g(T,p)$ on $(p_c,1)$ 
as a sum of terms of the form $\TFV(T, F_\alpha, \V_\alpha , p)$ with
$\deg (F_\alpha) \leq k+1$ and $|E(\V_\alpha)| \leq 2k+1$.
By Proposition~\ref{pr:k+1/k}, because our moment assumption implies
the weaker moment assumption of $k+1$ moments, each summand on the 
right-hand side of~\eqref{eq:result 0} is 
$O(\ee^{2\ell - \mathcal{E} - \deg (F)})$.
The worst case is $\deg (F) = k+1$ and 
$2\ell - \mathcal{E} = 1$ (see lemma above).  Therefore, 
the whole sum satisfies
\begin{equation} \label{eq:big O}
\left | g^{(k+1)} (T,p) \right | = O \left ( \ee^{-k - t} \right )
\end{equation}
for any $t > 0$ and $\GW$-almost every $T$.

Suppose now for contradiction that $g^{(k)} (\TT , p_c + \ee)$
does not coverge to $k! \, c_k$ as $\ee \downarrow 0$.  We apply
Lemma~\ref{lem:g-upper-bound} with $N = k^2 + 1$.  To check the
hypotheses, note that the existence of the order-$N$ expansion
follows from Theorem~\ref{th:g-expansion} with $\ell = k^2 + 1$
and our assumption that $\E[Z^{(1+\beta)(2k^2+3)}] < \infty$.
The induction hypothesis implies hypothesis~\eqref{eq:j<k} of
Lemma~\ref{lem:g-upper-bound}.  Our
assumption for proof by contradiction completes the verification
of the hypotheses of Lemma~\ref{lem:g-upper-bound}.  Seeing that
$N/k = k + 1/k$, the conclusion of the lemma directly 
contradicts~\eqref{eq:big O} when $t < 1/k$.  Since the proof of 
the induction step also establishes the base case $k=1$, the proof 
of the theorem is complete.
$\Cox$

\section{Open questions}

We conclude with a couple of open questions.
Propositions~\ref{pr:nonsmpr} and~\ref{pr:simfannc} are converses 
of a sort but they leave a gap as to whether $g \in C^j$ from the 
right at $p_c^+$ for $k/2 \leq j \leq k$.  

\begin{question}
Do $k$ moments of the offspring distribution suffice to imply that 
the annealed survival function is $k$ times differentiable at $p_c^+$?
More generally, is there a sharp result that $k$ moments but not $k+1$ 
imply $j$ times differentiability but not $j+1$ for some $j \in [ 
\lfloor k/2 \rfloor , k]$?
\end{question}

Another question is whether the expansion of either the quenched or
annealed survival function at $p_c$ has any more terms than are 
guaranteed by the continuity class.

\begin{question}
Does it ever occur that $g$ has an order-$k$ expansion at $p_c^+$ 
but is not of class $C^k$ at $p_c^+$?  Does this happen with positive
probability for $g(\TT , \cdot)$?
\end{question}

Recall that the annealed survival function is analytic on $(p_c , 1)$
whenever the offspring generating function extends beyond~1.  We do not 
know whether the same is true of the quenched survival function.

\begin{question}
When the offspring distribution has exponential moments, is the 
quenched survival function $g(\TT , p)$ almost surely analytic
on $[p_c , 1)$?
\end{question}

\section*{Acknowledgments}
The authors would like to thank Michael Damron for drawing our 
attention to~\cite{F-vdH}.  We would also like to thank Yuval Peres
for helpful conversations.

	\bibliographystyle{alpha}
	\bibliography{Bib}
	
\end{document}